\theoremstyle{plain}
\newtheorem{thm}{Theorem}[section]
\newtheorem{cor}[thm]{Corollary}
\newtheorem{lemma}[thm]{Lemma}
\newtheorem{lem}[thm]{Lemma}
\newtheorem{prop}[thm]{Proposition}
\numberwithin{equation}{section}
\newtheorem{conjecture}{Conjecture}  
\theoremstyle{definition}
\newtheorem{defn}[thm]{Definition}
\newtheorem{example}[thm]{Example}
\newtheorem{rmk}[thm]{Remark}
\theoremstyle{remark}
\newcommand{\BC}{{\mathbb{C}}}
\newcommand{\BP}{{\mathbb{P}}}
\newcommand{\BQ}{{\mathbb{Q}}}
\newcommand{\BR}{{\mathbb{R}}}
\newcommand{\BZ}{{\mathbb{Z}}}
\newcommand{\CA}{{\mathcal A}}
\newcommand{\CB}{{\mathcal B}}
\newcommand{\CE}{{\mathcal E}}
\newcommand{\CF}{{\mathcal F}}
\newcommand{\CH}{{\mathcal H}}
\newcommand{\CK}{{\mathcal K}}
\newcommand{\CL}{{\mathcal L}}
\newcommand{\CM}{{\mathcal M}}
\newcommand{\CN}{{\mathcal N}}
\newcommand{\CO}{{\mathcal O}}
\newcommand{\CT}{{\mathcal T}}
\newcommand{\CX}{{\mathcal X}}
\newcommand{\suchthat}{\;\ifnum\currentgrouptype=16 \middle\fi|\;}
\newcommand{\Fg}{{\mathfrak{g}}}
\newcommand{\Fh}{{\mathfrak{h}}}
\newcommand{\Fl}{{\mathfrak{l}}}
\newcommand{\Fo}{{\mathfrak{o}}}
\newcommand{\Fs}{{\mathfrak{s}}}
\newcommand{\Fu}{{\mathfrak{u}}}
\newcommand{\pt}{{\mathsf{p}}}
\newcommand{\ch}{{\mathrm{ch}}}
\newcommand{\ci}{{\mathrm{c}}}
\newcommand{\td}{{\mathsf{td}}}
\newcommand{\h}{\mathrm{H}}
\newcommand{\tH}{\tilde{\h}}
\newcommand{\tdd}{\mathsf{td}^{1/2}}
\newcommand{\One}{\mathsf{1}}
\newcommand{\RO}{\mathrm{O}}
\newcommand{\rk}{\mathrm{rk}}
\newcommand{\Rc}{\mathrm{c}}
\DeclareFontFamily{OT1}{rsfs}{}
\DeclareFontShape{OT1}{rsfs}{n}{it}{<-> rsfs10}{}
\DeclareMathAlphabet{\curly}{OT1}{rsfs}{n}{it}
\newcommand\Ext{\operatorname{Ext}}
\newcommand\Hom{\operatorname{Hom}}
\newcommand\End{\operatorname{End}}
\newcommand{\Aut}{\operatorname{Aut}}
\newcommand\Spec{\operatorname{Spec}}
\newcommand{\Coh}{\mathrm{Coh}}
\newcommand{\Pic}{\mathop{\rm Pic}\nolimits}
\newcommand{\chHH}{\mathrm{ch}^{\mathrm{HH}}}
\newcommand{\tq}{\tilde{\mathrm{q}}}
\newcommand{\RR}{\mathrm{R}}
\newcommand{\ST}{\mathsf{ST}}
\newcommand{\Sym}{\mathrm{Sym}}
\newcommand{\Stab}{\mathrm{Stab}}
\newcommand{\id}{\mathrm{id}}
\newcommand{\Tr}{\mathrm{Tr}}
\newcommand{\Q}{\mathbb{Q}}
\newcommand{\SH}{\mathrm{SH}}
\newcommand{\Kdrein}{\mathrm{K3}^{[n]}}
\newcommand{\At}{\mathrm{At}}
\newcommand{\tv}{\tilde{v}}
\newcommand{\Art}{\mathrm{Art}}
\newcommand{\Sets}{\mathrm{Sets}}
\newcommand{\Def}{\mathrm{Def}}
\newcommand{\EndS}{\mathscr{End}}
\newcommand{\HomS}{\mathscr{Hom}}
\newcommand{\ExtS}{\mathscr{Ext}}
\newcommand{\obs}{\mathrm{obs}}
\newcommand{\Ker}{\mathrm{Ker}}
\newcommand{\HH}{\mathrm{HH}}
\newcommand{\HT}{\mathrm{HT}}
\newcommand{\HO}{\mathrm{H\Omega}}
\newcommand{\IHKR}{I^{\mathrm{HKR}}}
\newcommand{\Ihkr}{I_{\mathrm{HKR}}}
\newcommand{\IK}{I^{\mathrm{K}}}
\newcommand{\Ik}{I_{\mathrm{K}}}
\newcommand{\Db}{\mathrm{D}^{\textup{b}}}
\newcommand{\Ann}{\mathrm{Ann}}
\DeclareFontFamily{U}{mathc}{}
\DeclareFontShape{U}{mathc}{m}{it}%
{<->s*[1.03] mathc10}{}
\DeclareMathAlphabet{\mathscr}{U}{mathc}{m}{it}
\begin{document}
	\title{Atomic objects on hyper-Kähler manifolds}
\author[Thorsten Beckmann]{Thorsten Beckmann}
\newcommand{\Addresses}{{
		\bigskip
		\footnotesize
		
		\textsc{Max--Planck--Institut für Mathematik,
			Vivatsgasse 7, 53111 Bonn, Germany.}\par\nopagebreak
		\textit{E-mail address}: \texttt{beckmann@math.uni-bonn.de}
}}
\maketitle
\begin{abstract}
	We introduce and study the notion of atomic sheaves and complexes on higher-dimensional hyper-Kähler manifolds and show that they share many of the intriguing properties of simple sheaves on K3 surfaces. For example, we prove formality of the dg algebra of derived endomorphisms for stable atomic bundles. We further demonstrate the characteristics of atomic objects by studying atomic Lagrangian submanifolds. In the appendix, we prove non-existence results for spherical objects on hyper-Kähler manifolds. 
\end{abstract}
\section{Introduction}
{\let\thefootnote\relax\footnotetext{The author is supported by the International Max--Planck Research School on Moduli Spaces of the Max--Planck Society.}}
\subsection{K3 surfaces and Mukai vectors}
Since the seminal work of Mukai \cite{MukaiModK3}, simple bundles on a K3 surface $X$ and, more generally, simple complexes in its bounded derived category $\Db(X) \coloneqq \Db(\Coh(X))$ have been studied intensively. 
One is therefore led to look for an analogue of these objects on higher-dimensional compact hyper-Kähler manifolds. 

Again motivated by the case of K3 surfaces, we introduced in \cite{BeckmannExtendedIntegral} the notion of an (extended) Mukai vector taking values in the (extended) Mukai lattice 
\[
\tH(X,\BQ) \coloneqq \h^2(X,\BQ) \oplus \BQ^{\oplus 2}
\]
for certain objects $\CE \in \Db(X)$ on hyper-Kähler manifolds $X$. In this paper, we consider a natural refinement of this construction which leads to the notion of atomic sheaves and complexes. It turns out that these objects possess many of the properties of simple sheaves and complexes on K3 surfaces. 
\subsection{Cohomology and LLV algebra}
From now on, $X$ will denote a compact irreducible hyper-Kähler manifold of dimension $2n$. 
The second cohomology $\h^2(X,\BQ)$ of a hyper-Kähler manifold is endowed with the Beauville--Bogomolov--Fujiki (BBF) form $q=q_X$ making it into a quadratic space. Moreover, the full cohomology $\h^\ast(X,\BQ)$ is naturally a module for the Looijenga--Lunts--Verbitsky (LLV) Lie algebra $\Fg(X)\cong \Fs\Fo(\tH(X,\BQ))$ generated by all $\Fs\Fl_2$-triples for all elements in $\h^2(X,\BQ)$ having the Hard Lefschetz property, see \cites{LooijengaLunts, VerbitskyCohomologyHK, GKLRLLV} for more details. This leads naturally to a decomposition
\begin{equation}
\label{eq:intro_decomp_cohomology_irred_repr}
	\h^\ast(X,\BQ) \cong \bigoplus_\lambda V_\lambda
\end{equation}
of the cohomology into irreducible $\Fg(X)$-representations. The most prominent irreducible representation is the Verbitsky component $\SH(X,\BQ) \subset \h^\ast(X,\BQ)$ which is the subalgebra generated by $\h^2(X,\BQ)$. 
\subsection{Atomic objects}
Recall the definition of the Mukai vector \[v(\CE) = \ch(\CE) \tdd \in \h^\ast(X,\BQ) \] 
for a sheaf $\CE \in \Coh(X)$ or an object $\CE \in \Db(X)$, where $\tdd = \sqrt{\td}$ is the formal root of the Todd class $\td \coloneqq \td_X$ of $X$.  
The idea in \cite[Sec.\ 4]{BeckmannExtendedIntegral} was to compare the projection $v(\CE)_\SH$ of the Mukai vector $v(\CE)$ of an object $\CE\in \Db(X)$ to the Verbitsky component
\begin{equation*}
	(\_)_\SH \colon \h^\ast(X,\BQ) \to \SH(X,\BQ)
\end{equation*}
with some vector $\tv\in \tH(X,\BQ)$ by means of the short exact sequence
\begin{equation*}
	0 \to \SH(X,\BQ) \to \Sym^n(\tH(X,\BQ)) \to \Sym^{n-2}(\tH(X,\BQ))\to 0.
\end{equation*}
This definition has the disadvantage that it only concerns the Verbitsky component and ignores all other irreducible representations of the LLV algebra $\Fg(X)$, but for many applications, such as in \cite{BeckmannExtendedIntegral}, this is sufficient. 

Instead of only focusing on the projection to the Verbitsky component, one can consider more generally the decomposition
\begin{equation}
	\label{eq:decomposition_Mukaivector}
	v(\CE) = \sum_\lambda v(\CE)_\lambda
\end{equation}
obtained from the decomposition \eqref{eq:intro_decomp_cohomology_irred_repr}. In particular, one may demand a compatibility of the Mukai vector $v(\CE)$ of $\CE$ not only with its projection to the Verbitsky component, but with respect to the entire decomposition \eqref{eq:decomposition_Mukaivector}. This leads naturally to the central notion of this paper. 
\begin{defn}
	\label{defn:atomic_objects_via_Mukaivector}
	A sheaf $\CE \in \Coh(X)$ or an object $\CE \in \Db(X)$ is called \textit{atomic} if there exists a non-zero vector $\tv \in \tH(X,\BQ)$ such that the annihilator Lie subalgebra $\mathrm{Ann}(v(\CE)) \subset \Fg(X)$ of the representation of $\Fg(X)$ on $\h^\ast(X,\BQ)$ equals the annihilator Lie subalgebra $\mathrm{Ann}(\tv)\subset \Fg(X) \cong \Fs\Fo(\tH(X,\BQ))$ of the representation of $\Fg(X)$ on $\tH(X,\BQ)$. 
\end{defn}
Let us comment on the definition. First, every non-zero sheaf on a K3 surface is atomic. Moreover, a sheaf $\CE$ being atomic is equivalent to $\Ann(\CE)$ having the largest possible dimension, see Proposition~\ref{prop:atomic_annihilator_codimension} and Lemma~\ref{lem:atomic_sheaf_in_Verbitsky}. This should be interpreted as its Mukai vector behaving just as in the case of K3 surfaces. As demonstrated in Proposition~\ref{prop:atomic_stable_under_defo_derived_equivalence} the property of being atomic is invariant under derived equivalences as well as deformations. 

Furthermore, Definition~\ref{defn:atomic_objects_via_Mukaivector} recovers \cite[Def.\ 4.16]{BeckmannExtendedIntegral} when restricted to the Verbitsky component. That is, denoting by $T$ the orthogonal projection to the isometric embedding $\SH(X,\BQ) \hookrightarrow \Sym^n(\tH(X,\BQ))$, the condition 
\[
v(\CE)_\SH \in \BQ \langle T(\tv^n) \rangle
\]
is by Proposition~\ref{prop:Atomic_Verbitsky_component_one_trivial_representation} equivalent to the equality
\[
\Ann(v(\CE)_\SH) = \Ann(\tv) \subset \Fg(X).
\]
In particular, as discussed in Section~\ref{subsec:atomic_Mukai_vector_general_properties}, these objects possess a Mukai vector in $\tH(X,\BQ)$ which for a torsion-free atomic sheaf $\CE$ is of the form $\rk(\CE)\alpha + \Rc_1(\CE) + s\beta$ for some $s\in \BQ$. Let us also remark that we show in Section~\ref{subsec:atomic_Lie_theoretic} that many summands in \eqref{eq:decomposition_Mukaivector} must vanish for atomic objects. See Section~\ref{sec:atomic_objects} for a thorough discussion of the definition. 
\subsection{Obstruction maps}
\label{subsec:intro_obs_maps}
One of the key results exploited throughout the whole paper is the relation and interplay for a sheaf or an object $\CE$ between the (a priori topological) property of being atomic, (non-commutative) deformations parametrized by Hochschild cohomology $\HH^\ast(X)$  respectively polyvector fields $\HT^\ast(X)$, and its extension groups $\Ext^\ast(\CE,\CE)$. This relationship is established through the use of two so called obstruction maps, which we now elaborate on. The name obstruction maps refers to their appearance and application in deformation theory, see also Remark~\ref{rmk:explanation_name_ostruction_maps}.

We recall here
\[
\HT^2(X) \coloneqq \h^2(X,\CO_X) \oplus \h^1(X,\CT_X ) \oplus \h^0(X,\Lambda^2 \CT_X)
\]
and refer to Section~\ref{subsec:prelim_hh} for a thorough definition of the ring of polyvector fields. 
To every object $\CE \in \Db(X)$ we associate a natural morphism 
\[
\obs_\CE \colon \HT^2(X) \to \h^\ast(X,\Omega_X^\ast), \quad \mu \mapsto \mu \lrcorner v(\CE)
\]
defined by contraction of vector fields. We call it the \textit{cohomological obstruction map} for $\CE$. 

We have the first result. 
\begin{thm}
	\label{thm:atomic_equivalent_coh_obstrucion}
	Let $X$ be a hyper-Kähler manifold and $\CE\in \Db(X)$. Then $\CE$ is atomic if and only if the cohomological obstruction map $\obs_\CE$ has a one-dimensional image. 
\end{thm}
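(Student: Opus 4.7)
The plan is to realize the contraction action of $\HT^2(X)$ on $\h^\ast(X, \BC)$ as a piece of the LLV action on cohomology, thereby translating the topological atomicity condition into a dimension count for the image of $\obs_\CE$.

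First, I would set up the Lie-theoretic comparison. Using the symplectic form $\sigma$, there is a natural isomorphism $\HT^2(X) \cong \h^2(X, \BC)$ respecting the Hodge decomposition:
\[
\h^0(\Lambda^2 \CT_X) \cong \h^{2,0}(X), \quad \h^1(\CT_X) \cong \h^{1,1}(X), \quad \h^2(\CO_X) = \h^{0,2}(X).
\]
I would then construct an embedding $\HT^2(X) \hookrightarrow \Fg(X)_\BC = \Fs\Fo(\tH(X,\BC))$ whose Hodge pieces land in the weight subspaces $\Fg^{-2}, \Fg^0, \Fg^{+2}$ respectively, matching the total degree shifts $-2, 0, +2$ of the three types of contraction operators. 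The key claim is: under this embedding, the contraction $\mu \lrcorner (-)$ on $\h^\ast(X, \BC)$ coincides with the LLV action of $\mu$ (up to explicit factors arising from $\sqrt{\td}$ in the definition of $v(\CE)$). This identification is a Hodge-theoretic computation performed on test classes, using the Cartan homotopy formula and the explicit description of the LLV algebra.

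With the identification in hand, let $\Fh \subset \Fg(X)_\BC$ denote the image of $\HT^2(X)$. Then $\mathrm{Image}(\obs_\CE) = \Fh \cdot v(\CE)$ and
\[
\dim \mathrm{Image}(\obs_\CE) \;=\; \dim \Fh - \dim\bigl(\Fh \cap \Ann(v(\CE))\bigr).
\]
For the forward direction (atomic $\Rightarrow$ 1-dim image), the decomposition \eqref{eq:decomposition_Mukaivector} is $\Fg$-equivariant, so $\obs_\CE$ preserves LLV-summands. Using Proposition~\ref{prop:Atomic_Verbitsky_component_one_trivial_representation}, the atomicity hypothesis forces $v(\CE)_\SH \in \BQ\langle T(\tv^n) \rangle$; a direct computation on $T(\tv^n)$ (which is an $n$-th power class) shows that contractions $\mu \lrcorner T(\tv^n)$ are all proportional in $\SH(X, \BC)$. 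Combined with the vanishing/trivial-representation constraints imposed by $\Ann(v(\CE)) = \Ann(\tv)$ on the remaining LLV-summands $V_\lambda$, the total image is at most $1$-dimensional.

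For the converse, the $\Fg$-equivariance of $\obs_\CE$ and the $1$-dim image condition force all but one LLV-summand to contribute trivially. Applying the contraction condition on the Verbitsky summand and invoking Proposition~\ref{prop:Atomic_Verbitsky_component_one_trivial_representation} in its non-trivial direction, I would recover $\tv \in \tH(X,\BC)$ from $v(\CE)_\SH$ with $\Ann(v(\CE)_\SH) = \Ann(\tv)$. For each other summand $V_\lambda$, vanishing of $\obs_\CE|_{V_\lambda}$ means $\Fh$ annihilates $v(\CE)_\lambda$; combined with an argument that $\Fh$ generates a sufficiently large Lie subalgebra of $\Fg$ acting non-trivially on any non-trivial irreducible summand, this forces $v(\CE)_\lambda$ to be $\Fg$-invariant (hence not to obstruct the equality $\Ann(v(\CE)) = \Ann(\tv)$). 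The conclusion is atomicity.

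The principal obstacle is the first step: rigorously constructing the embedding $\HT^2(X) \hookrightarrow \Fg(X)_\BC$ and verifying that contraction coincides with the LLV action on all three Hodge pieces simultaneously. The middle piece $\h^1(\CT_X) \hookrightarrow \Fg^0$ is the most delicate, as $\Fg^0$ contains both an $\Fs\Fo(\h^2)$ and an $\Fs\Fo(U)$ summand. The converse direction is also subtle, requiring the $\Fg$-equivariant decomposition together with a generation lemma for $\Fh \subset \Fg$ to rule out non-Verbitsky contributions.
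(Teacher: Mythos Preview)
Your principal obstacle is not an obstacle: the embedding $\HT^2(X)\hookrightarrow\Fg(X)_\BC$ via $\mu\mapsto e_\mu=\mu\lrcorner(-)$ is precisely the content of Theorem~\ref{thm:Taelman_Verbitsky_Equality_Lie_Algebras} (Taelman--Verbitsky). Under the Hodge grading operator $h'$, all of $\HT^2(X)$ lands in the degree-$+2$ piece of $\Fg(X)_\BC$; your decomposition into $h$-weights $-2,0,+2$ is correct but obscures this, and there is nothing delicate about the middle piece. With this in hand, the forward direction is immediate and needs no summand-by-summand analysis: $\Ker(\obs_\CE)=\Ann(v(\CE))_\BC\cap\HT^2(X)=\Ann(\tv)_\BC\cap\HT^2(X)$, and the perfect pairing $\HT^2(X)\times\bigl(\BC\alpha\oplus\h^{1,1}(X,\BC)\oplus\BC\beta\bigr)\to\h^{0,2}(X)$ shows the latter has codimension one in $\HT^2(X)$.

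Your converse has a genuine gap. Invoking Proposition~\ref{prop:Atomic_Verbitsky_component_one_trivial_representation} ``in its non-trivial direction'' to recover $\tv$ from $v(\CE)_\SH$ is circular, since that proposition assumes atomicity; and nothing in your outline guarantees that the single non-vanishing summand is the Verbitsky one. The paper's argument (contained in the proof of Proposition~\ref{prop:atomic_annihilator_codimension}) works directly on $\Ann(v(\CE))$: for $\mu\in W\coloneqq\Ker(\obs_\CE)$ with the Hard Lefschetz property, the $\Fs\Fl_2$-relation $[e_\mu,\Lambda_\mu]=h'$ together with $h'(v(\CE))=0$ and $e_\mu(v(\CE))=0$ forces $\Lambda_\mu(v(\CE))=0$. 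This is the ``generation lemma'' you gesture at but never state, and it produces a Lie subalgebra of dimension $\binom{b_2(X)+1}{2}$ inside $\Ann(v(\CE))_\BC$. One then takes $\tv$ to be the element of $\BC\alpha\oplus\h^{1,1}(X,\BC)\oplus\BC\beta$ orthogonal to $W$ under the perfect pairing above; a dimension count gives $\Ann(v(\CE))_\BC=\Ann(\tv)_\BC$, and Lemma~\ref{lem:Kernel_element_defined_over_Q} descends $\tv$ to $\tH(X,\BQ)$.
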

This result enables us to freely intertwine the representation theory of the LLV algebra with the (symplectic) geometry of vector fields on hyper-Kähler manifolds. 
We remark that Markman has obtained the if direction in the above theorem in \cite[Thm.\ 6.13]{MarkmanObs} under the extra assumption that $v(\CE)_\SH \neq 0$. 

Next, to any $\CE \in \Db(X)$ we can associate the natural homomorphism
\[
\chi_\CE \colon \HH^2(X)\to \Ext^2(\CE,\CE)
\]
via evaluation at the natural transformation called the \textit{obstruction map}. See Section~\ref{subsec:prelim_hh} for a brief recollection on the notions of Hochschild (co)homology. The map $\chi_\CE$ parametrizes the obstruction to lifting the complex $\CE$ to first order along the (noncommutative) first-order deformations given by $\HH^2(X)$ \cite[Prop.\ 6.1]{TodaDeformations}. For an element $\gamma \in \HH^2(X)$ we will often denote its image $\chi_\CE(\gamma)$ as $\gamma_\CE$. By \cite{HuangQuestion} the following diagram
\begin{equation}
	\label{diag:obstruction_and_HKR}
	\begin{tikzcd}
		\HH^\ast(X)\ar[r, "\chi_\CE"]\ar[d,swap, "\IHKR"] & \Ext^\ast(\CE,\CE)\\
		\HT^\ast(X) \arrow[ru,  swap,"\lrcorner \exp(\At_\CE)"]&
	\end{tikzcd}
\end{equation}
commutes, where $\exp(\At_\CE)$ is the exponential of the \textit{Atiyah class} $\At_{\CE} \in \Ext^1(\CE,\CE\otimes \Omega^1_X)$ of $\CE$ and $\IHKR \colon \HH^\ast(X) \cong \HT^\ast(X)$ is the Hochschild--Konstant--Rosenberg (HKR) isomorphism. 
Markman \cite{MarkmanObs} recently studied objects for which the obstruction map has a one-dimensional image. We will call such objects \textit{1-obstructed}. 
The following result is a strengthening of \cite[Thm.\ 6.13 (1)]{MarkmanObs}.
\begin{thm}
	\label{thm:1obstructed_implies_atomic}
	If $\CE\in \Db(X)$ is a 1-obstructed object such that $v(\CE)$ is not annihilated by the LLV algebra $\Fg(X)$, then $\CE$ is atomic. In particular, 1-obstructed sheaves are atomic. 
\end{thm}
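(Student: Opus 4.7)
By Theorem~\ref{thm:atomic_equivalent_coh_obstrucion}, atomicity of $\CE$ is equivalent to the cohomological obstruction map $\obs_\CE$ having one-dimensional image. The plan is to establish this by bounding the image from both sides: from above by one using the 1-obstructed hypothesis, and from below (i.e.\ as non-zero) using the non-annihilation hypothesis.

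For the upper bound, I would post-compose the commuting diagram~\eqref{diag:obstruction_and_HKR} with the categorical trace $\mathrm{tr}\colon \Ext^*(\CE,\CE) \to \HH_{-*}(X)$. Since contraction by a polyvector field is $\CO_X$-linear, one has
\[
\mathrm{tr}\bigl(\IHKR(\gamma)\lrcorner \exp(\At_\CE)\bigr) \;=\; \IHKR(\gamma)\lrcorner \mathrm{tr}(\exp(\At_\CE)) \;=\; \IHKR(\gamma)\lrcorner \ch(\CE).
\]
Combining with the Mukai--Caldararu compatibility — which says the Todd-twisted HKR isomorphism $\Ik$ identifies Hochschild homology with $\h^*(X,\Omega_X^*)$ and sends $\chHH(\CE)$ to $v(\CE)$ — produces a commutative square
\[
\begin{tikzcd}
\HH^2(X) \ar[r, "\chi_\CE"] \ar[d, swap, "\IHKR"] & \Ext^2(\CE,\CE) \ar[d, "\Ik\, \circ\, \mathrm{tr}"] \\
\HT^2(X) \ar[r, swap, "\obs_\CE"] & \h^*(X,\Omega_X^*).
\end{tikzcd}
\]
Linearity of the right vertical arrow immediately transfers the 1-obstructed bound on $\chi_\CE$ into $\dim(\mathrm{im}\,\obs_\CE) \le 1$.

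For the lower bound, I argue by contrapositive: if $\obs_\CE \equiv 0$, then every $\mu \in \HT^2(X)$ annihilates $v(\CE)$ by contraction. Using the symplectic identification $\CT_X \cong \Omega_X^1$ and the Hodge decomposition $\HT^2(X) = \h^2(\CO_X) \oplus \h^1(\CT_X) \oplus \h^0(\Lambda^2\CT_X)$, the contraction action recovers the Lefschetz operator $L_{\bar\sigma}$ (from $\h^2(\CO_X)$), the dual Lefschetz $\Lambda_\sigma$ (from $\h^0(\Lambda^2 \CT_X)$), and a family of type-shifting operators parametrised by $\h^{1,1}(X)$ (from $\h^1(\CT_X)$). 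The Lie subalgebra of $\Fg(X)$ generated by these is large enough that simultaneous annihilation of $v(\CE)$ forces $v(\CE)$ to lie in the trivial $\Fg(X)$-subrepresentation of $\h^*(X,\BQ)$, contradicting the non-annihilation hypothesis.

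Combining the two bounds gives $\dim(\mathrm{im}\,\obs_\CE) = 1$, so $\CE$ is atomic by Theorem~\ref{thm:atomic_equivalent_coh_obstrucion}. For the \emph{in particular} assertion, any nonzero coherent sheaf has $v(\CE)$ with a nonzero component outside the trivial $\Fg(X)$-isotypic summand (for example, for a torsion-free sheaf the nonzero rank contribution sits in $\h^0(X)\subset \SH(X,\BQ)$, which is a nontrivial irreducible $\Fg(X)$-representation), so the non-annihilation hypothesis holds automatically. The main obstacle in the plan is the lower bound: precisely identifying the image of contraction by $\HT^2(X)$ inside $\Fg(X)$ and showing that it separates the trivial sub-representation. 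The upper bound is essentially formal once the Mukai--Caldararu trace identity is in place.
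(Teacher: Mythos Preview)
Your overall strategy matches the paper's: bound $\dim \mathrm{im}(\obs_\CE)$ from above by one via the comparison with $\chi_\CE$, from below via the non-annihilation hypothesis, and conclude with Theorem~\ref{thm:atomic_equivalent_coh_obstrucion}. The upper bound is essentially the paper's Lemma~\ref{lem:obstructed_implies_cohomologically_obstructed}, though your square as written is slightly off: the module compatibility is between the \emph{Todd-twisted} isomorphisms, so the left vertical should be $\IK$ rather than $\IHKR$ for the bottom map to be $\obs_\CE$. This is harmless since both are isomorphisms.

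The genuine gap is the lower bound, and you are right to flag it. The operators you extract from $\HT^2(X)$ via contraction --- $e_{\bar\sigma}$, $\Lambda_\sigma$, and the $e_\mu$ for $\mu \in \h^1(X,\CT_X)$ --- are \emph{all} of $h'$-degree $+2$, hence pairwise commute and span only an abelian subalgebra of $\Fg(X)_\BC$. They do not by themselves generate anything close to the full LLV algebra, so simultaneous annihilation of $v(\CE)$ by them does not directly force $v(\CE)$ into the trivial isotypic component. The missing step, which the paper carries out in the proof of Proposition~\ref{prop:atomic_annihilator_codimension} and in the lemma following the proof of Theorem~\ref{thm:1obstructed_implies_atomic}, is to use that $v(\CE)$ is of Hodge type, i.e.\ $h'(v(\CE))=0$. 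Then for any $\mu\in\HT^2(X)$ with the Hard Lefschetz property one has
\[
0 = h'(v(\CE)) = [e_\mu,\Lambda_\mu](v(\CE)) = e_\mu(\Lambda_\mu(v(\CE))),
\]
and injectivity of $e_\mu$ on $\HO_{-2}(X)$ forces $\Lambda_\mu(v(\CE))=0$. Since the triples $(e_\mu,h',\Lambda_\mu)$ generate $\Fg(X)_\BC$ by Theorem~\ref{thm:Taelman_Verbitsky_Equality_Lie_Algebras}, this gives the contradiction.

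For the ``in particular'' clause, your argument only covers torsion-free sheaves. The paper's Lemma~\ref{lem:atomic_sheaf_in_Verbitsky} handles arbitrary sheaves by pairing the support class with a suitable power of a K\"ahler class.
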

We note that if $\CE$ satisfies the conclusion of the theorem, i.e.\ if $\CE$ is atomic, then its Mukai vector $v(\CE)$ satisfies the assumption in the theorem of not being annihilated by the LLV algebra, see Section~\ref{subsec:comparison_atomic_obstruction_map}. Under a certain non-degeneracy condition for the Serre duality trace map, the implication that 1-obstructed objects are atomic holds unconditionally, see Conjecture~\ref{conj:serre_duality_image_obstruction_map}.

It is, however, not true that the converse implication always holds. As shown by Example~\ref{ex:K3_Atomic_not_1obstructed}, there are vector bundles on K3 surfaces which are not 1-obstructed. However, for K3 surfaces, 1-obstructedness and atomicity are equivalent for simple sheaves and complexes. We show that under the above alluded to non-degeneracy condition of the Serre duality trace morphism restricted to the image of the obstruction map, this statement remains valid for simple atomic objects on higher-dimensional hyper-Kähler manifolds. 
\begin{thm}
	\label{thm:atomic_implies_1obstructed_via_conjecture}
	If $\CE \in \Db(X)$ is a simple object satisfying Conjecture~\ref{conj:serre_duality_image_obstruction_map}, then $\CE$ is 1-obstructed if and only if $\CE$ is atomic. 
\end{thm}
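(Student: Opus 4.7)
The plan is to relate the images of the two obstruction maps directly via the commutative diagram~\eqref{diag:obstruction_and_HKR} combined with a trace/Chern character map on $\Ext^\ast(\CE,\CE)$, and then to use Conjecture~\ref{conj:serre_duality_image_obstruction_map} as the sole bridge between their ranks.

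First I would observe that~\eqref{diag:obstruction_and_HKR} combined with the Atiyah--Chern character formula $\Tr(\exp(\At_\CE))\cdot \tdd = v(\CE)$ identifies $\obs_\CE$ with the composition of $\chi_\CE$ (after applying $\IHKR$) with a natural trace map
\[
\tau_\CE\colon \Ext^\ast(\CE,\CE) \ra \h^\ast(X,\Omega_X^\ast), \quad \alpha \longmapsto \Tr\bigl(\alpha\circ\exp(\At_\CE)\bigr)\cdot \tdd .
\]
The resulting identity $\obs_\CE \circ \IHKR = \tau_\CE \circ \chi_\CE$ yields unconditionally a surjection $\mathrm{Im}(\chi_\CE) \twoheadrightarrow \mathrm{Im}(\obs_\CE)$, hence the inequality $\dim \mathrm{Im}(\obs_\CE) \leq \dim \mathrm{Im}(\chi_\CE)$. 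By Theorem~\ref{thm:atomic_equivalent_coh_obstrucion} this means that atomicity ($\dim \mathrm{Im}(\obs_\CE) = 1$) is a priori a weaker condition than $1$-obstructedness ($\dim \mathrm{Im}(\chi_\CE) = 1$).

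Next I would interpret Conjecture~\ref{conj:serre_duality_image_obstruction_map} as asserting that, for a simple object, the restriction of $\tau_\CE$ to $\mathrm{Im}(\chi_\CE)$ is injective: this is essentially a reformulation of the conjectured non-degeneracy of the Serre trace pairing $\Ext^2(\CE,\CE)\otimes \Ext^{2n-2}(\CE,\CE) \to \Ext^{2n}(\CE,\CE)\cong \kk$ restricted to the image of the obstruction map, using simplicity to normalize the trace via the identification $\Hom(\CE,\CE)\cong \kk\cdot \id_\CE$. Granted this injectivity, the surjection above becomes an isomorphism, so $\dim \mathrm{Im}(\chi_\CE) = \dim \mathrm{Im}(\obs_\CE)$ and the two conditions coincide.

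As a cross-check for the direction 1-obstructed $\Rightarrow$ atomic, once the conjecture guarantees that $\obs_\CE \not\equiv 0$ whenever $\chi_\CE$ is nonzero, one can also appeal to Theorem~\ref{thm:1obstructed_implies_atomic}: a nonzero $\obs_\CE$ rules out $v(\CE)$ being annihilated by $\Fg(X)$. The main obstacle I foresee is precisely the passage from the Serre-duality non-degeneracy stated in Conjecture~\ref{conj:serre_duality_image_obstruction_map} to the injectivity of $\tau_\CE$ on $\mathrm{Im}(\chi_\CE)$: this amounts to matching, via Caldararu's Mukai pairing and the (modified) Hochschild--Kostant--Rosenberg isomorphism, the Serre trace pairing on $\Ext^\ast(\CE,\CE)$ with the cohomological pairing on $\h^\ast(X,\Omega_X^\ast)$, a well-known but delicate identification in which the simplicity of $\CE$, entering through the normalization of the Serre trace by $\id_\CE$, plays its essential role.
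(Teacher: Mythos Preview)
Your overall strategy matches the paper's: compare the ranks of $\chi_\CE$ and $\obs_\CE$, get one inequality for free, and use Conjecture~\ref{conj:serre_duality_image_obstruction_map} to turn it into an equality, then invoke Theorem~\ref{thm:atomic_equivalent_coh_obstrucion}. The paper, however, runs the comparison on \emph{kernels} rather than images and uses the \emph{modified} HKR isomorphism $\IK$ together with the defining property of the Hochschild Chern character $\chHH(\CE)$: from $\mu\lrcorner v(\CE)=0$ one gets $\gamma\circ\chHH(\CE)=0$ for $\gamma=(\IK)^{-1}(\mu)$, hence $\Tr_X(\gamma'_\CE\circ\gamma_\CE)=\Tr_{X\times X}(\gamma'\circ\gamma\circ\chHH(\CE))=0$ for all $\gamma'\in\HH^{2n-2}(X)$, and then the conjecture forces $\gamma_\CE=0$. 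This yields $\Ker(\obs_\CE)=\IK(\Ker(\chi_\CE))$ directly, without ever constructing a trace map out of $\Ext^\ast(\CE,\CE)$.

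The specific factorization you write, $\obs_\CE\circ\IHKR=\tau_\CE\circ\chi_\CE$ with $\tau_\CE(\alpha)=\Tr(\alpha\circ\exp(\At_\CE))\cdot\tdd$, is not correct as stated. For $\mu\in\h^1(X,\CT_X)$ the contraction $\mu\lrcorner$ is a derivation on the ring $\h^\ast(X,\Omega_X^\ast)$, so
\[
\obs_\CE(\mu)=\mu\lrcorner(\ch(\CE)\cdot\tdd)=(\mu\lrcorner\ch(\CE))\cdot\tdd+\ch(\CE)\cdot(\mu\lrcorner\tdd),
\]
whereas unwinding your $\tau_\CE\circ\chi_\CE$ produces only the first summand. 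This is exactly the phenomenon flagged after Remark~\ref{rmk:Theorem_1.2_completely_cohomological} in the paper (the Huybrechts--Nieper-Wi{\ss}kirchen equivalence of vanishing for $\mu\lrcorner v(\CE)$ and $\mu\lrcorner\ch(\CE)$ does \emph{not} upgrade to an equality of maps, and fails altogether on $\h^0(X,\Lambda^2\CT_X)$). The ``main obstacle'' you name---matching the Serre trace pairing with the cohomological pairing---is precisely what the paper handles in one line via the defining identity $\Tr_{X\times X}(\delta\circ\chHH(\CE))=\Tr_X(\delta_\CE)$; once you route the argument through $\chHH(\CE)$ and $\IK$ rather than through your $\tau_\CE$ and $\IHKR$, the proof goes through cleanly.
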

We want to emphasize that we view the property of being 1-obstructed as a (conjectural) feature of simple atomic objects and not vice versa. 
\subsection{Modular {\&} projectively hyperholomorphic bundles and deformations}
Stable vector bundles are the easiest examples of simple objects on K3 surfaces. On higher-dimensional hyper-Kähler manifolds, there exists the notion of (projectively) hyperholomorphic bundles due to Verbitsky \cite{VerbitskyHyperholomorphicoverHK}. Recently, O'Grady proposed the notion of modular sheaves and bundles in \cite{OGradyModularSheaves}. 

We discuss their relation and, in particular, how atomic sheaves and bundles fit into the picture. The discussion can be summarized by the following two results.
\begin{prop}
	\label{prop:atomic_implies_modular}
	Let $\CE$ be a torsion-free atomic sheaf. Then $\CE$ is modular. 
\end{prop}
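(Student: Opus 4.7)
The plan is to deduce modularity from the Lie-theoretic rigidity that atomicity imposes on the Mukai vector. Recall the condition to be verified: $\int_X \Delta(\CE) \cdot \alpha^{2n-2} = c(\CE)\, q(\alpha)^{n-1}$ for every $\alpha \in \h^2(X,\BQ)$ and some constant $c(\CE) \in \BQ$. First I reduce the statement to the Verbitsky projection: for any $\xi \in \h^4(X,\BQ)$,
\[
\int_X \xi \cdot \alpha^{2n-2} \;=\; \int_X \xi_\SH \cdot \alpha^{2n-2},
\]
since $\alpha^{2n-2} \in \SH^{2n-2}(X,\BQ)$ and the $\Fg(X)$-invariance of Poincaré duality pairs distinct irreducible $\Fg(X)$-summands of $\h^\ast(X,\BQ)$ to zero. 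Applying this to $\xi = \Delta(\CE)$, modularity becomes a statement about $\Delta(\CE)_\SH \in \SH^4(X,\BQ)$.

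Next I express $\Delta(\CE)_\SH$ in terms of $v(\CE)_\SH$. Using $c_1(X)=0$, the expansion $v(\CE) = \ch(\CE) \cdot \tdd(X)$ gives $v(\CE)_4 = \tfrac{1}{2}c_1(\CE)^2 - c_2(\CE) + r \cdot \tdd(X)_4$ with $r = \rk(\CE)$, which combined with $\Delta(\CE) = 2r\, c_2(\CE) - (r-1)\,c_1(\CE)^2$ yields $\Delta(\CE) = c_1(\CE)^2 - 2r\, v(\CE)_4 + 2r^2\, \tdd(X)_4$. Since $c_1(\CE)^2 \in \SH^4(X,\BQ)$, projection to $\SH$ gives
\[
\Delta(\CE)_\SH \;=\; c_1(\CE)^2 - 2r\, (v(\CE)_\SH)_4 + 2r^2\, (\tdd(X)_4)_\SH.
\]
Fujiki's classical computation for $c_2(X)$ on hyper-Kähler manifolds implies $(\tdd(X)_4)_\SH$ is a rational multiple of the class $q^\vee \in \SH^4(X,\BQ)$ characterised by $\int_X q^\vee \cdot \alpha^{2n-2} = q(\alpha)^{n-1}$. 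Atomicity then enters through the fact, recalled after Definition~\ref{defn:atomic_objects_via_Mukaivector}, that $v(\CE)_\SH = \lambda\, T(\tv^n)$ for some $\lambda \in \BQ$ with $\tv = r\alpha + c_1(\CE) + s\beta \in \tH(X,\BQ)$.

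The crux is a direct harmonic-projection calculation of $T(\tv^n)_4 \in \SH^4(X,\BQ)$, using the standard $\Fs\Fo(\tH)$-equivariant decomposition of $\Sym^n \tH$ by iteratively subtracting multiples of $q(\tv)\cdot \tv^{n-2}$ and its harmonic refinements. This expresses $T(\tv^n)_4$ as an explicit $\BQ$-combination of $c_1(\CE)^2$ and $q^\vee$ whose coefficients depend only on $n$, $r$, $s$ and $q(c_1(\CE))$. Substituting, $\Delta(\CE)_\SH = A\, c_1(\CE)^2 + B\, q^\vee$; the numerical content of atomicity is that $A = 0$, because the $c_1(\CE)^2$-contribution of $-2r\lambda\, T(\tv^n)_4$ cancels the leading $c_1(\CE)^2$ exactly once the normalization of the isometric embedding $\SH(X,\BQ) \hookrightarrow \Sym^n \tH(X,\BQ)$ is tracked. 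With $\Delta(\CE)_\SH = B\, q^\vee$, Fujiki delivers $\int_X \Delta(\CE) \cdot \alpha^{2n-2} = B\, q(\alpha)^{n-1}$ and modularity follows. The main obstacle is the harmonic-projection calculation together with the bookkeeping of the normalization constants; conceptually the assertion is unsurprising, since atomicity forces $v(\CE)_\SH$ to behave as a formal $n$-th symmetric power of a single vector in $\tH$, for which the induced discriminant is automatically modular.
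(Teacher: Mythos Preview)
Your reduction to showing $\Delta(\CE)_\SH \in \BQ\,\mathsf{q}_2$ and the identity $\Delta(\CE)_\SH = c_1^2 - 2r\lambda\,(T(\tv^n))_4 + 2r^2(\tdd_4)_\SH$ are correct, but the heart of the matter --- the claim that the coefficient $A$ of $c_1^2$ vanishes --- is asserted, not proved. You say this follows ``once the normalization of the isometric embedding is tracked'', but that is exactly the content of the proposition; nothing you have written forces the cancellation, and a direct harmonic-projection computation of $(T(\tv^n))_4$ from $\tv = r\alpha + c_1 + s\beta$ is genuinely unpleasant.

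The paper bypasses the computation with a one-line Lie-theoretic trick (Lemma~\ref{lem:atomic_Chern_pp}). Conjugating by $\exp(e_{-c_1/r}) \in \mathrm{SO}(\tH)$ sends $\tv$ to $\tv' = r\alpha + t\beta$ with $t = s - q(c_1)/(2r)$, so
\[
\Ann\!\big(v(\CE)\exp(-c_1/r)\big) \;=\; \Ann(\tv') \;\ni\; W_I
\]
for \emph{every} Weil operator $W_I$, since $\tv'$ has no $\h^2$-component. Hence $\kappa(\CE)$ and $\Delta(\CE) = -2r\,\kappa(\CE)_4$ remain of Hodge type on all K\"ahler deformations of $X$, and the projection to $\SH^4$ of such a class is a multiple of $\mathsf{q}_2$ by \cite[Prop.~2.14]{LooijengaLunts}. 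No coefficients need to be matched.

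If you insist on finishing your computation, the same twist does it: since $T$ is $\Fg(X)$-equivariant, $T(\tv^n) = \exp(c_1/r)\cdot T((\tv')^n)$ in $\SH$. Now $T((\tv')^n)$ is $\mathrm{SO}(\h^2)$-invariant, so its degree-$2$ part vanishes and its degree-$4$ part lies in $\BQ\,\mathsf{q}_2$; moreover its degree-$0$ part equals $(T(\tv^n))_0 = r/\lambda$. Expanding,
\[
(T(\tv^n))_4 \;=\; (T((\tv')^n))_4 \;+\; \tfrac{1}{2}(c_1/r)^2\cdot \tfrac{r}{\lambda} \;\in\; \BQ\,\mathsf{q}_2 + \tfrac{1}{2r\lambda}\,c_1^2,
\]
and $-2r\lambda$ times this contributes exactly $-c_1^2$, giving $A=0$ as you wanted. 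So your route is salvageable, but only by importing the paper's key idea.
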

In particular, for torsion-free atomic sheaves the ample cone admits a wall and chamber decomposition similar to the case of K3 surfaces as proven in \cite[Prop.\ 3.4]{OGradyModularSheaves}. 

In \cite[Thm.\ 1.2]{MarkmanObs}, the author obtained a weaker form of the above result, where it is also assumed that the sheaf is reflexive as well as slope stable for ample classes in an open subcone of the ample cone. Our result does not require these assumptions and our proof is independent and shorter. 
\begin{prop}
	\label{prop:atomic_is_proj_hyperholomorphic}
	Let $\CE$ be a slope polystable atomic vector bundle. Then $\CE$ is projectively hyperholomorphic. 
\end{prop}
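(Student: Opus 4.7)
The plan is to apply Verbitsky's cohomological criterion: a slope polystable reflexive sheaf on $X$ is projectively hyperholomorphic if and only if the $\End$-Chern classes $c_1(\End(\CE))$ and $c_2(\End(\CE))$ remain of pure Hodge type $(k,k)$ for every complex structure in the twistor family of some compatible hyper-K\"ahler metric, equivalently are invariant under the quaternionic $SU(2)$-action generated by the Lefschetz operators of a hyper-K\"ahler triple $(\omega_I,\omega_J,\omega_K)$. Since $c_1(\End(\CE))=0$ automatically, the task reduces to showing $SU(2)$-invariance of the discriminant $\Delta(\CE) := c_2(\End(\CE)) \in H^{2,2}(X)$, which for a $(2,2)$-class is equivalent to the vanishing of $\sigma \cup \Delta(\CE) \in H^{4,2}$ and $\bar\sigma \cup \Delta(\CE) \in H^{2,4}$, where $\sigma$ denotes the holomorphic symplectic form.

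By Theorem~\ref{thm:atomic_equivalent_coh_obstrucion}, atomicity of $\CE$ is equivalent to the cohomological obstruction map
\[
\obs_\CE \colon \HT^2(X) \to H^\ast(X,\Omega_X^\ast), \qquad \mu \mapsto \mu \lrcorner v(\CE),
\]
having one-dimensional image inside $\bigoplus_p H^{p,p+2}(X)$. Evaluating on the distinguished elements $\bar\sigma \in H^2(\CO_X)$ and $\sigma^{-1} \in H^0(\Lambda^2 \CT_X)$ inside $\HT^2(X)$ yields, for each $k$, relations $\bar\sigma \cup v_k(\CE) = c(\bar\sigma)\, u_k$ and $\sigma^{-1} \lrcorner v_{k+2}(\CE) = c(\sigma^{-1})\, u_k$ for a single class $u=\sum u_k \in \bigoplus H^{k,k+2}(X)$ and scalars $c(\bar\sigma), c(\sigma^{-1}) \in \BC$. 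Combining these with the explicit formula $\Delta(\CE) = v_1(\CE)^2 - 2r\, v_2(\CE) + \tfrac{r^2}{12} c_2(X)$ (which uses $c_1(X)=0$ to compute the Todd correction), one extracts $\bar\sigma \cup \Delta(\CE)$ and $\sigma \cup \Delta(\CE)$ purely in terms of the $u_k$, the class $v_1(\CE)$, and $c_2(X)$.

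The main obstacle is upgrading these linear constraints to actual vanishing. My expectation is that this is achieved by combining modularity of $\CE$ (Proposition~\ref{prop:atomic_implies_modular}), which already pins down the projection of $\Delta(\CE)$ to the Verbitsky component $\SH(X,\BQ)$ against powers of $H^2$-classes, with the representation-theoretic vanishing of LLV-summands of $v(\CE)$ forced by atomicity (Section~\ref{subsec:atomic_Lie_theoretic}). Together these rule out the non-$SU(2)$-invariant LLV-summands in $\Delta(\CE)$, forcing $\bar\sigma \cup \Delta(\CE) = 0$ and $\sigma \cup \Delta(\CE) = 0$. Once $SU(2)$-invariance of $\Delta(\CE)$ is established, Verbitsky's criterion yields that $\CE$ is projectively hyperholomorphic. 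Finally, for the polystable case, one applies the argument to each isotypic component, using that the atomic condition is compatible with the polystable decomposition.
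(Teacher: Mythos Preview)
Your reduction to Verbitsky's criterion is correct, but the argument has two genuine gaps.

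First, the claimed equivalence ``for a $(2,2)$-class, $SU(2)$-invariance is equivalent to $\sigma\cup\Delta(\CE)=0$ and $\bar\sigma\cup\Delta(\CE)=0$'' is false. The $SU(2)$ in question is generated by the Weil operators $W_I,W_J,W_K$, which have degree zero for the cohomological grading $h$; the operator $e_\sigma$ has degree two and does not lie in $\mathfrak{su}(2)$. Concretely, $c_2(X)$ is $SU(2)$-invariant (the tangent bundle is hyperholomorphic), yet $\sigma\cup c_2(X)\neq 0$ already on $\mathrm{K3}^{[2]}$-type fourfolds, since $c_2(X)$ is a nonzero multiple of $\mathsf{q}_2$ and one checks $\int_X \sigma\bar\sigma\,\mathsf{q}_2\neq 0$ via the Fujiki relations. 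So the vanishing you aim for is neither necessary nor, by itself, sufficient. Second, even granting some workable reformulation, the ``main obstacle'' paragraph is an expectation, not an argument: you never actually show the non-$SU(2)$-invariant summands of $\Delta(\CE)$ vanish. Finally, the polystable step is both unnecessary and unjustified: there is no reason the summands of a polystable atomic bundle should be atomic, and Verbitsky's criterion applies directly to polystable bundles.

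The paper's route is shorter and avoids all of this. It proves (Lemma~\ref{lem:atomic_Chern_pp}) that for a torsion-free atomic sheaf $\CE$ of rank $r$, the class $\tilde\kappa(\CE)=v(\CE)\exp(-c_1(\CE)/r)$ is annihilated by \emph{every} Weil operator $W_I$ for every complex structure on $X$. The point is that multiplying by $\exp(-c_1(\CE)/r)$ conjugates $\Ann(v(\CE))=\Ann(\tilde v(\CE))$ inside $\Fg(X)$ to $\Ann(r\alpha+t\beta)$ for some $t\in\BQ$, and the latter visibly contains every $W_I$. Hence $\kappa(\CE)$, $\Delta(\CE)$, and $\ch(\CE\otimes\CE^\vee)$ remain of Hodge type for all K\"ahler deformations. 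Since $\CE$ polystable implies $\EndS(\CE)$ polystable, Verbitsky's criterion applied to $\EndS(\CE)$ (whose $c_1$ vanishes and whose $c_2$ is now $SU(2)$-invariant) gives that $\EndS(\CE)$ is hyperholomorphic, i.e.\ $\CE$ is projectively hyperholomorphic. No case analysis on summands is needed, and one never has to isolate $\sigma\cup\Delta(\CE)$.
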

We will recall the relevant details on (projectively) hyperholomorphic bundles in Section~\ref{sec:hyperhol_and_modular_bundles}. However, quite intriguingly, the tangent bundle $\CT_X$ on higher-dimensional hyper-Kähler manifolds, which is hyperholomorphic as well as modular, fails to be atomic, see Proposition~\ref{prop:tangent_bundle_not_atomic}.

One remarkable property of stable bundles on K3 surfaces is their deformation behavior. We investigate the deformation theory of (poly)stable atomic bundles. 

We obtain two results. From Theorem~\ref{prop:atomic_is_proj_hyperholomorphic} one can deduce that for stable atomic bundle $\CE$ the associated projective bundle $\BP(\CE)$ deforms over the whole moduli space which is the content of Proposition~\ref{prop:atomic_bundles_deform_projective_bundle}. The other result is the following.
\begin{thm}
	\label{cor:atomic_VB_formal_RHom}
	Let $\CE$ be an atomic slope stable vector bundle. Then the dg algebra $\mathrm{R}\mathscr{Hom}(\CE^{\oplus k},\CE^{\oplus k})$ is formal for any $k>0$. 
\end{thm}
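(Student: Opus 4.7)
The plan is to reduce to the case $k=1$ by matrix amplification, and then to apply a formality theorem for the Dolbeault dg algebra with coefficients in a hyperholomorphic bundle. First, there is a natural identification of dg algebras
\[
\mathrm{R}\mathscr{Hom}(\CE^{\oplus k}, \CE^{\oplus k}) \cong \mathrm{R}\mathscr{Hom}(\CE, \CE) \otimes_{\BC} \mathrm{Mat}_k(\BC),
\]
with $\mathrm{Mat}_k(\BC)$ sitting in degree $0$. Since tensoring over a field with a finite-dimensional semisimple algebra sends any chosen formality quasi-isomorphism to a formality quasi-isomorphism, it is enough to prove the statement for $k=1$.

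Since $\CE$ is slope stable and atomic, Proposition~\ref{prop:atomic_is_proj_hyperholomorphic} ensures that $\CE$ is projectively hyperholomorphic. Stability gives the trace splitting $\End(\CE) \cong \CO_X \oplus \End_0(\CE)$, and projective hyperholomorphicity of $\CE$ is equivalent to hyperholomorphicity of $\End_0(\CE)$; together with the fact that $\CO_X$ is tautologically hyperholomorphic, we conclude that $\End(\CE)$ is itself hyperholomorphic. Concretely, $\End(\CE)$ carries a Hermite--Einstein connection whose curvature is of type $(1,1)$ with respect to every complex structure in the twistor sphere, and the three complex structures $I$, $J$, $K$ act compatibly on the twisted Dolbeault complex $\CA^{0,*}(X,\End(\CE))$.

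Next, I would invoke Verbitsky's formality theorem for hyperholomorphic bundles: for such $\CF$ on a compact hyper-Kähler manifold, the Dolbeault dg algebra $(\CA^{0,*}(X,\End(\CF)),\bar\partial)$ is quasi-isomorphic, as a dg algebra, to its cohomology with zero differential. The argument is a twisted version of the Deligne--Griffiths--Morgan--Sullivan principle: the Dolbeault Laplacians associated to the three complex structures on the twistor $\BP^1$ coincide on $\End(\CF)$-valued forms, so harmonic projection yields a multiplicative model, and the $\partial\bar\partial$-lemma for each structure kills the remaining boundary. Applied to $\CF=\CE$ (after reducing to $k=1$) this delivers formality of $\mathrm{R}\mathscr{Hom}(\CE,\CE)$.

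The main obstacle is to ensure the dg-algebra-level (rather than merely $A_\infty$-level) formality statement is available in the precise form needed, and to handle cleanly the passage from the projective hyperholomorphic structure on $\CE$ to the hyperholomorphic structure on $\End(\CE)$. Should the Dolbeault route prove insufficient, a backup plan is Kaledin's formality criterion combined with Theorem~\ref{thm:atomic_equivalent_coh_obstrucion}: the one-dimensionality of the image of $\obs_\CE$ allows one to pair the twistor family's Kodaira--Spencer class with the Atiyah class of $\CE$ to produce a non-trivial first-order deformation of $\CE$ along the twistor base, and Kaledin's criterion then reduces formality to the vanishing of certain higher Massey products that are controlled by the LLV-annihilator condition satisfied by $v(\CE)$.
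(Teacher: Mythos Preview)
Your opening moves are correct: the reduction from $\CE^{\oplus k}$ to $\CE$ via $\mathrm{Mat}_k(\BC)$ is fine (though the paper handles all $k$ at once, so this is not strictly needed), and invoking Proposition~\ref{prop:atomic_is_proj_hyperholomorphic} to pass from atomic to projectively hyperholomorphic is exactly what the paper does. From that point on, however, your route diverges from the paper's and has a real gap.

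Your main line appeals to a ``Verbitsky formality theorem'' for the Dolbeault dg algebra $(\CA^{0,\ast}(X,\End(\CE)),\bar\partial)$ via a twisted DGMS/$\partial\bar\partial$-argument. As you yourself flag, such a statement is not available in the literature in the dg-algebra form you need; Verbitsky proves \emph{quadraticity} of the Kuranishi space for hyperholomorphic bundles, not formality of the endomorphism dg algebra, and the coincidence of Laplacians for $\End(\CE)$-valued forms does not by itself give a multiplicative harmonic model. Your backup plan is also off target: Kaledin's criterion is not about pairing Kodaira--Spencer with Atiyah classes or about the LLV annihilator of $v(\CE)$, and Theorem~\ref{thm:atomic_equivalent_coh_obstrucion} plays no role in the formality argument.

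The paper's proof (Theorem~\ref{prop:atomicVBareformal}) instead spreads $\End(\CE^{\oplus k})$ holomorphically over the twistor line $\pi\colon \CX\to\BP^1$ to a bundle $\CF$, and uses Verbitsky's computation $R^i\pi_\ast\CF\cong\CO_{\BP^1}(i)\otimes_\BC\h^i(X,\CF)$. This makes the cohomology algebra sheaf $\CB^\bullet$ locally constant over $\BP^1$, whence the Hochschild cohomology sheaves $\CH\CH^i_l(\CB^\bullet)$ are locally trivial of the form $\CO_{\BP^1}(l)\otimes V$. For $l\le -1$ these have no global sections, so Kaledin's formality-in-families criterion (Theorem~\ref{thm:KaledinFormalityFamilies}(ii)), following \cite[Prop.~3.1]{KaledinLehn}, gives formality at every fibre. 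The key Verbitsky input is thus the pushforward identity on the twistor family, not a harmonic-theory formality statement.
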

More precisely, in Theorem~\ref{prop:atomicVBareformal} we prove formality of the dg algebra of derived endomorphisms for the bigger class of projectively hyperholomorphic bundles. The above result then follows immediately from Proposition~\ref{prop:atomic_is_proj_hyperholomorphic}. One consequence of this is that the local Kuranishi space of infinitesimal deformations is cut out by quadrics. 
For the details and further consequences for moduli spaces of stable sheaves we refer to Section~\ref{sec:deformation_theory_atomic}. 
\subsection{Lagrangians}
It follows easily from the definitions that atomic sheaves $\CE$ which are torsion must be skyscraper sheaves or supported on Lagrangian subvarieties. This raises the question which Lagrangian submanifolds $\iota \colon L \subset X$ can support atomic sheaves. 
\begin{thm}
	\label{thm:atomic_Lagrangian_structure}
	Let $\iota \colon L \subset X$ be a connected Lagrangian submanifold. Then $\iota_\ast \CO_L$ is atomic if and only if the restriction map $\iota^\ast\colon \h^2(X,\BQ) \to \h^2(L,\BQ)$ has a one-dimensional image and $\Rc_1(L) = \ci_1(\CT_L)\in \mathrm{Im}(\iota^\ast) \subset \h^2(L,\BQ)$.  
\end{thm}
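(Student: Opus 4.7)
By Theorem~\ref{thm:atomic_equivalent_coh_obstrucion} it suffices to show that the cohomological obstruction map
\[
\obs_{\iota_\ast\CO_L}\colon \HT^2(X)\to \h^\ast(X,\Omega_X^\ast),\qquad \mu\mapsto \mu\lrcorner v(\iota_\ast\CO_L),
\]
has one-dimensional image if and only if the two conditions on $L$ hold. My first step is to compute the Mukai vector using Grothendieck--Riemann--Roch: the Lagrangian identification $N_{L/X}\cong \Omega_L^1$ together with the standard identity $\td(E^\vee)=e^{-\Rc_1(E)}\td(E)$ yields the clean formula
\[
v(\iota_\ast\CO_L)\;=\;\iota_\ast\bigl(\tdd(\CT_L)/\tdd(\Omega_L^1)\bigr)\;=\;\iota_\ast\bigl(e^{\Rc_1(L)/2}\bigr),
\]
so that all higher Chern classes of $L$ cancel and the Mukai vector depends only on $\Rc_1(L)$.

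I would then decompose $\HT^2(X)=\h^2(\CO_X)\oplus \h^1(\CT_X)\oplus \h^0(\Lambda^2\CT_X)$ and, via the holomorphic symplectic form $\sigma\colon\CT_X\xrightarrow{\sim}\Omega_X^1$, identify $\HT^2(X)$ with $\h^2(X,\BC)$, writing $\tilde\mu\in\h^2(X,\BC)$ for the class associated to $\mu\in\HT^2(X)$. The key technical ingredient is a projection-formula-type identity for the contraction of a polyvector field on $X$ with the pushforward $\iota_\ast(\gamma)$ of a cohomology class on $L$: a local-coordinate analysis along $L$, using $N_{L/X}\cong\Omega_L^1$ to turn the normal-direction components of $\mu$ into forms on $L$, yields for each Hodge component of $\HT^2(X)$ an identity of the shape
\[
\mu\lrcorner\iota_\ast(\gamma)\;=\;\iota_\ast\bigl(F_\mu\bigl(\iota^\ast\tilde\mu,\,\Rc_1(L)\bigr)\cdot\gamma\bigr),\qquad \gamma\in\h^\ast(L,\BC),
\]
where $F_\mu$ is a polynomial depending on the Hodge bidegree of $\mu$. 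Applied to $\gamma=e^{\Rc_1(L)/2}$, the image $\obs_{\iota_\ast\CO_L}(\mu)$ becomes the $\iota_\ast$-pushforward of a polynomial in $\iota^\ast\tilde\mu$ and $\Rc_1(L)$ multiplied by the invertible class $e^{\Rc_1(L)/2}$.

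Since multiplication by $e^{\Rc_1(L)/2}$ is an automorphism of $\h^\ast(L,\BC)$, the image of $\obs_{\iota_\ast\CO_L}$ is one-dimensional if and only if the $\BC$-span of $\bigl\{F_\mu\bigl(\iota^\ast\tilde\mu,\Rc_1(L)\bigr):\mu\in\HT^2(X)\bigr\}$ inside $\h^\ast(L,\BC)$ is one-dimensional. The principal contribution, coming from $\mu\in\h^1(\CT_X)$, is exactly $\iota^\ast\tilde\mu$, so the one-dimensionality forces $\iota^\ast\colon \h^2(X,\BQ)\to \h^2(L,\BQ)$ to have a one-dimensional image; the correction terms involving $\Rc_1(L)$ (arising from the $\h^2(\CO_X)$ and $\h^0(\Lambda^2\CT_X)$ components) remain in the same line precisely when $\Rc_1(L)\in\mathrm{Im}(\iota^\ast)$. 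Conversely, if both conditions hold, all $F_\mu$-values are contained in the single line spanned by any generator of $\iota^\ast\h^2(X,\BQ)$, and non-triviality of the image is witnessed by contracting $v(\iota_\ast\CO_L)$ with the dual symplectic class $\sigma^\vee\in\h^0(\Lambda^2\CT_X)$.

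The main technical obstacle is establishing the projection-formula identity of the second paragraph: one must carefully handle the (generally non-split) short exact sequence $0\to\CT_L\to\iota^\ast\CT_X\to\Omega_L^1\to 0$ through the symplectic isomorphism and verify that the contractions arising from the three Hodge components of $\HT^2(X)$ assemble into the predicted polynomial expressions $F_\mu(\iota^\ast\tilde\mu,\Rc_1(L))$.
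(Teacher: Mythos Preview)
Your computation of the Mukai vector $v(\iota_\ast\CO_L)=\iota_\ast(e^{\Rc_1(L)/2})$ via Grothendieck--Riemann--Roch is correct and matches the paper's Lemma~\ref{lem:Mukai_vector_Lagrangian}. Beyond this, however, the argument has two genuine gaps.

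First, the projection-formula-type identity $\mu\lrcorner\iota_\ast(\gamma)=\iota_\ast\bigl(F_\mu(\iota^\ast\tilde\mu,\Rc_1(L))\cdot\gamma\bigr)$ is the entire content of the theorem, and you do not establish it. For $\mu\in\h^2(\CO_X)$ the contraction is cup product with $\bar\sigma$ and vanishes because $L$ is Lagrangian; but for $\mu\in\h^0(\Lambda^2\CT_X)$ the operator is, up to scale, $\Lambda_\sigma$, and there is no reason a priori why $\Lambda_\sigma(\iota_\ast\gamma)$ should be a pushforward from $L$, let alone one expressible through $\iota^\ast\tilde\mu$ and $\Rc_1(L)$. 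The paper avoids this by working inside the LLV algebra: it twists $v(\iota_\ast\CO_L)$ by $\exp(\omega)$ with $\iota^\ast\omega=-\Rc_1(L)/2$ to reduce to the single class $\iota_\ast[L]$, and then proves (Proposition~\ref{prop:Lagrangian_comparison_kernels_LLV}) that the kernel of $\mu\mapsto\mu\lrcorner\iota_\ast[L]$ in $\HT^2(X)$ is isomorphic to $\Ker(\iota^\ast)\subset\h^2(X,\BC)$, using commutator identities such as $[e_\sigma,e_\mu]=e_{-\mu\lrcorner\sigma}$ and Voisin's observation that $\Ker(\iota^\ast)=\Ker(\iota_\ast[L]\wedge\_)$ on $\h^2(X,\BQ)$.

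Second, and more seriously, your ``if and only if'' between one-dimensionality of the image of $\obs_{\iota_\ast\CO_L}$ in $\h^\ast(X,\BC)$ and one-dimensionality of the span of the $F_\mu$-values in $\h^\ast(L,\BC)$ fails in the direction you need for necessity: $\iota_\ast$ is not injective on $\h^\ast(L,\BC)$, so a one-dimensional image in $\h^\ast(X,\BC)$ does not force the preimages on $L$ to span a line. This is precisely where the paper's proof becomes delicate. To show that atomicity forces $\Rc_1(L)\in\mathrm{Im}(\iota^\ast)$, the paper (Steps 3--6) uses the atomic condition to find $\mu\in\h^1(\CT_X)$ with $\Lambda_\sigma-e_\mu\in\Ker(\obs_{\iota_\ast\CO_L})$, then applies $e_\sigma$ and the commutator identity to obtain $e_\omega(\iota_\ast[L])=\iota_\ast\Rc_1(L)/2$ for a suitable $\omega\in\h^{1,1}(X)$, and finally invokes the Hodge--Riemann bilinear relations on $L$ to conclude that $\Rc_1(L)/2-\iota^\ast\omega=0$ rather than merely lying in $\Ker(\iota_\ast)$. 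Your sketch has no analogue of this step, and without it the necessary direction does not go through.
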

If one uses the interplay of (obstructions to) deformations and atomicity derived from Theorem~\ref{thm:atomic_equivalent_coh_obstrucion}, the first condition in the above theorem controls the behaviour with respect to geometric deformations parametrized by $\h^1(X,\CT_X)$ and the second condition controls Poisson deformations parametrized by $\h^0(X,\Lambda^2\CT_X)$. 
For the special case of $\mathrm{K3}^{[2]}$-type hyper-Kähler manifolds, where only the Verbitsky component is present, this result was obtained in \cite[Lem.\ 7.3]{MarkmanObs}. 

We call submanifolds which satisfy one of the equivalent conditions from Theorem~\ref{thm:atomic_Lagrangian_structure} \textit{atomic Lagrangians}. Since being atomic is stable under derived equivalences, we get many examples of atomic sheaves supported on atomic Lagrangians.

Theorem~\ref{thm:atomic_Lagrangian_structure} displays once more that atomic objects behave similarly to those on K3 surface. Namely, smooth Lagrangian submanifolds of K3 surfaces correspond to Riemannian surfaces and are therefore either Fano, of Kodaira dimension zero, or have ample canonical bundle. This conclusion remains true for atomic Lagrangians, that is the canonical bundle $\omega_L$ of an atomic Lagrangian $L \subset X$ is also (anti-)ample or numerically trivial. 

We also discuss the question of formality of the derived endomorphisms for the sheaf $\iota_\ast \CO_L$ in Section~\ref{subsec:Atomic_Lagrangian_Formality}. Moreover, it follows from recent results of Mladenov \cite{MladenovDegeneration} that for many simple sheaves on atomic Lagrangians the Ext algebra is of topological nature, that is, there is a ring isomorphism
\begin{equation*}
	\Ext^\ast(\iota_\ast \CO_L, \iota_\ast \CO_L) \cong \h^\ast(L,\BC). 
\end{equation*}
This implies, in particular, that the Ext algebra is graded-commutative. 
As is shown in Proposition~\ref{prop:Proof_Conj_Skew_Symmetry_k3_hyperbolic}, this compares nicely with the case of simple objects $\CE \in \Db(S)$ on K3 surfaces $S$, where we always have
\[
\Ext^\ast(\CE, \CE) \cong \h^\ast(C,\BC)
\]
for some Riemannian surface $C$. 
We expect this topological nature to remain true for simple atomic objects on higher dimensional hyper-Kähler manifolds, see also Conjecture~\ref{conj:Skew-symmetry} for a weaker version of this statement. 
\subsection{Spherical sheaves and objects}
To study the interplay between the different obstruction maps alluded to in Section~\ref{subsec:intro_obs_maps}, we study how the Mukai vector $v(\CE)$ of an object $\CE$ forces restrictions on the Ext algebra $\Ext^\ast(\CE,\CE)$. We refine this study in the appendix, which is logically independent from the rest of the paper. The general structural result is Theorem~\ref{thm:appendix_ext_non_vanishing}. 

Recall that a sheaf or an object $\CE$ is called spherical, if there is a ring isomorphism
\[
\Ext^\ast(\CE,\CE) \cong \h^\ast(S^{\dim X}, \BC). 
\]
One of the consequences of the above result is the following, which has been expected, but a proof has been missing in the literature. 
\begin{thm}
\label{thm:introduction_non-existence_spherical_K3n_OG10}
	There exist no spherical sheaves on a hyper-Kähler manifold $X$ of dimension greater than two. Moreover, if $X$ is of $\Kdrein$ with $n>1$ or $\mathrm{OG}10$-type, then $\Db(X)$ contains no spherical objects. 
\end{thm}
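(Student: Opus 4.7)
My plan is to combine Hirzebruch--Riemann--Roch (to force $v(\CE)\neq 0$) with the structural result Theorem~\ref{thm:appendix_ext_non_vanishing}, which produces lower bounds on the graded Ext algebra from a nonzero Mukai vector. Since a spherical object on $X$ of complex dimension $2n$ has $\chi(\CE,\CE) = \chi(S^{2n}) = 1+(-1)^{2n}=2$, and this quantity equals $\int_X v(\CE)^\vee\cdot v(\CE)$ by Hirzebruch--Riemann--Roch, one immediately gets $v(\CE)\neq 0$.

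For the non-existence of spherical sheaves on any $X$ of dimension greater than two, the plan is then very short. Sphericality forces $\Ext^k(\CE,\CE)=0$ for $0<k<2n$, while coherence of $\CE$ automatically gives $\Ext^k(\CE,\CE)=0$ for $k\notin [0,2n]$. Applying Theorem~\ref{thm:appendix_ext_non_vanishing} to the nonzero Mukai vector from the previous step produces a non-vanishing $\Ext^k(\CE,\CE)$, and for a sheaf this $k$ is necessarily in $(0,2n)$, contradicting sphericality. Alternatively, the same conclusion can be bootstrapped through the obstruction map: sphericality kills $\chi_\CE$ outside degrees $0$ and $2n$, hence by the commutative diagram \eqref{diag:obstruction_and_HKR} and the trace, $\mu \lrcorner v(\CE)=0$ for every $\mu\in\HT^k(X)$ with $0<k<2n$; feeding in the holomorphic Poisson bivector $\sigma\in\h^0(X,\Lambda^2\CT_X)\subset\HT^2(X)$ and exploiting the LLV/$\Fs\Fl_2$-structure on $\h^\ast(X,\Omega^\ast_X)$ leads to $v(\CE)=0$, again contradicting the first paragraph.

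For the second assertion, concerning arbitrary complexes on $\Kdrein$ (with $n>1$) and on $\mathrm{OG}10$-type hyper-Kähler manifolds, the plan is more delicate because a cohomological shift of $\CE$ can reposition a single non-vanishing Ext group. To overcome this, I would exploit that the LLV decomposition of $\h^\ast(X,\BQ)$ is explicitly known in these two deformation types: besides the Verbitsky component, there are additional classified irreducible summands of $\Fg(X)$. The plan is to deduce, via Theorem~\ref{thm:appendix_ext_non_vanishing} applied to $v(\CE)\neq 0$, that $\Ext^\ast(\CE,\CE)$ must contain at least two distinct non-zero graded pieces $\Ext^{k_1}$ and $\Ext^{k_2}$ whose relative difference satisfies $0<|k_1-k_2|<2n$. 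Since this relative constraint is invariant under cohomological shifts but incompatible with the spherical Poincaré polynomial $1+t^{2n}$ (whose only two nonzero terms sit at the extreme degrees), a contradiction follows regardless of any shift.

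The main obstacle I expect is this last step. One must translate the explicit LLV-irreducible decomposition of $v(\CE)$ for $\Kdrein$- and $\mathrm{OG}10$-type manifolds into forced Ext non-vanishings at specified \emph{relative} degrees, and then verify that the classification of LLV summands never allows the required non-vanishings to concentrate entirely at the two extremes $0$ and $2n$. For a general hyper-Kähler manifold this quantitative improvement of Theorem~\ref{thm:appendix_ext_non_vanishing} is not available, which is precisely why the second half of the theorem is restricted to these two deformation types.
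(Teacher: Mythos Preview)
Your argument for the first assertion has a gap. From $v(\CE)\neq 0$ alone, Theorem~\ref{thm:appendix_ext_non_vanishing} with $i=2n$ only yields some $j$ with $2\le j\le 2n$ and $\Ext^j(\CE,\CE)\neq 0$; nothing prevents $j=2n$, which is perfectly compatible with sphericality. The place where the sheaf hypothesis actually enters is Lemma~\ref{lem:atomic_sheaf_in_Verbitsky}: for any nonzero sheaf one has $v(\CE)_{\SH}\neq 0$, i.e.\ the projection to $U_2=\SH(X,\BC)$ is already nonzero. Applying Theorem~\ref{thm:appendix_ext_non_vanishing} with $i=2$ then forces $\Ext^2(\CE,\CE)\neq 0$, contradicting sphericality as soon as $2n>2$. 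Your alternative obstruction-map sketch has the same defect: the vanishing $\mu\lrcorner v(\CE)=0$ for all $\mu\in\HT^k(X)$ with $0<k<2n$ only shows that $v(\CE)$ lies in the subspace of $\h^\ast(X,\BQ)$ annihilated by the LLV algebra, not that $v(\CE)=0$; trivial $\Fg(X)$-representations do occur in general, and again it is $v(\CE)_{\SH}\neq 0$ that rules this out.

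For the second assertion you are working harder than necessary, and the route you outline is not clearly completable. The paper's argument is one line once you observe the following fact, which is what singles out $\Kdrein$ and $\mathrm{OG}10$: the full cohomology ring $\h^\ast(X,\BQ)$ is generated by classes of degree strictly less than $2n$ (Markman for $\Kdrein$, Green--Kim--Laza--Robles for $\mathrm{OG}10$). Thus $U_{2n-1}=\h^\ast(X,\Omega^\ast_X)$, and the contrapositive of Theorem~\ref{thm:appendix_ext_non_vanishing} with $i=2n-1$, combined with the spherical vanishing $\Ext^j(\CE,\CE)=0$ for $2\le j\le 2n-1$, forces $v(\CE)=0$, contradicting $\chi(\CE,\CE)=2$. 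No LLV branching and no relative-degree bookkeeping is required; the restriction to these deformation types is precisely the generation-in-low-degree input, not a statement about the shape of the LLV decomposition.
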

In general, we show that spherical objects on hyper-Kähler manifolds, if existent, are severely restricted. For example, their Mukai vectors must be contained in a subspace of the subspace annihilated by the LLV algebra, see Remark~\ref{rmk:appendix_spherical_mukai_vector_subspace}. 
\subsection{Organization of results}
We provide in the next section results about Hochschild (co)homology, polyvector fields and the LLV algebra that we will employ throughout the paper. 

In Section~\ref{sec:atomic_objects} we deduce consequences and properties from Definition~\ref{defn:atomic_objects_via_Mukaivector} for atomic objects. The relation between atomic objects and the different obstruction maps is discussed in Section~\ref{sec:obstruction_maps}.

The next two sections are devoted to the study of vector bundles on hyper-Kähler manifolds and their deformation theory. Section~\ref{sec:Atomic_Lagrangians} discusses the structure of atomic Lagrangians such as formality aspects, obstruction maps and Yoneda multiplication. 

The last section discusses examples of atomic sheaves and complexes. We also discuss further properties of atomic objects such as an $\Fs\Fl_2$-action on its extension groups. In the appendix, we establish the above mentioned restriction results for spherical objects on higher-dimensional hyper-Kähler manifolds. 
\subsection{Relation to other work}
We independently obtained the notion of atomic sheaves and complexes naturally from a thorough inspection of our work \cite[Sec.\ 4]{BeckmannExtendedIntegral}. 

In \cite{MarkmanObs}, Markman studies sheaves and complexes on hyper-Kähler manifolds whose obstruction map or cohomological obstruction map has a one-dimensional image. The notion of atomicity appears implicitely in \cite[Thm.\ 6.13]{MarkmanObs} and is related to the obstruction maps under the extra assumption $v(\CE)_\SH \neq 0$. 

However, in \cite{MarkmanObs} being atomic is seen as a consequence of (cohomologically) 1-obstructed objects. On the other hand, we see atomicity as the central notion. We show in Theorem~\ref{thm:atomic_equivalent_coh_obstrucion} that being atomic and having a one-dimensional cohomological obstruction map is equivalent, which, a posteriori, also strengthens some results of \cite{MarkmanObs}. Nevertheless, we remark that \cite{MarkmanObs} helped us in shaping our exposition and directing our attention. 

As has been mentioned at a few places in the introduction, a few of our results have appeared in weaker forms in \cite{MarkmanObs} for (cohomologically) 1-obstructed objects. It is the notion of atomicity and making use of the full force of the LLV algebra in combination with Theorem~\ref{thm:atomic_equivalent_coh_obstrucion} which allows us to give independent proofs of our stronger results which are more general and need less assumptions.
\subsection*{Acknowledgements}
I am grateful for my supervisor Daniel Huybrechts for his constant support and feedback on a preliminary version of this text. The content of this paper has been presented in October 2021 in the Amsterdam Algebraic Geometry Seminar as well as in November 2021 in the SAG in Bonn. I thank the participants for ample feedback. I have greatly benefited from conversations with Pieter Belmans, Yajnaseni Dutta, Shengyuan Huang, Emanuele Macr\`i, Eyal Markman, Mirko Mauri, Denis Nesterov, Georg Oberdieck, Andrey Soldatenkov, Jieao Song, Lenny Taelman, and Till Wehrhan.
\subsection*{Conventions}
We will work throughout over the complex numbers. 
\section{Recollections}
\subsection{Hochschild (co)homology}
\label{subsec:prelim_hh}
We briefly recall the notions of Hochschild homology and cohomology and related results relevant for our purposes. For more details we refer to \cites{CaldararuMukaiII, CaldararuWillertonMukaiI, CaldararuMukaiI}.

Let $X$ be a smooth projective variety of dimension $n$. The Hochschild cohomology $\HH^{\ast}(X)$ and Hochschild homology $\HH_{\ast}(X)$ of $X$ are defined as
\[
\HH^{\ast}(X)\coloneqq \Ext^{\ast}_{X\times X}(\Delta_{\ast}\CO_X,\Delta_{\ast}\CO_X), \quad \HH_{\ast}(X)\coloneqq \Ext^{\ast}_{X \times X}(\Delta_{\ast} \omega_X^{-1}[-n], \Delta_{\ast}\CO_X)
\]
with $\Delta \colon X \hookrightarrow X \times X$ the diagonal embedding. Composition of morphisms turns $\HH^{\ast}(X)$ into a graded ring and $\HH_{\ast}(X)$ into a module over $\HH^{\ast}(X)$. Elements in the Hochschild (co)homology can be interpreted as natural transformations and, therefore, be evaluated at elements $\CE\in \Db(X)$. The Hochschild--Konstant--Rosenberg (HKR) isomorphisms identify the Hochschild cohomology of $X$ with the ring of polyvector fields
\[
\IHKR\colon \HH^{\ast}(X)\cong\HT^{\ast}(X)\coloneqq \bigoplus_{p+q=\ast}\h^q(X,\Lambda^p\CT_X)
\]
as well as the Hochschild homology of $X$ with the de Rham cohomology
\[
\Ihkr \colon \HH_{\ast}(X)\cong \HO_{\ast}(X)\coloneqq \bigoplus_{q-p=\ast}\h^q(X,\Omega^p_X),
\]
see \cite[Cor.\ 4.2]{CaldararuMukaiII}. If these are twisted by the square root of the Todd class $\tdd$, the graded isomorphisms
\begin{align*}
	\IK\colon \HH^{\ast}(X) \xrightarrow{\IHKR} \HT^{\ast}(X) \xrightarrow{\td^{-1/2} \lrcorner \_} \HT^{\ast}(X)\\
	\Ik\colon \HH_{\ast}(X)\xrightarrow{\Ihkr} \HO_{\ast}(X) \xrightarrow{\tdd \wedge \_} \HO_{\ast}(X)
\end{align*}
respect the ring and module structure \cite{CalaqueRossiVdB}. We will often use implicitly the degeneration of the Hodge--de Rham spectral sequence to identify non gradedly $\HO_\ast(X) \cong \h^\ast(X,\Omega_X^\ast) \cong \h^\ast(X,\BC)$. 

Let now $X$ be a hyper-Kähler manifold of dimension $2n$. The choice of a non-degenerate symplectic form $\sigma \in \h^0(X,\Omega_X^2)$ yields a generator $\sigma^n\in \HO_{-2n}(X)$ realizing $\HH_{\ast}(X)$ as a free $\HH^\ast(X)$-module of rank one \cite[Lem.\ 2.5]{TaelmanDerHKLL}. Moreover, the symplectic form induces an isomorphism $\sigma \colon \Omega^1_X \cong \CT_X$ such that the composite isomorphism
\begin{equation}
	\label{eq:iso_rings_HH_HT_HO}
	\HH^{\ast}(X)\xrightarrow{\IK}\HT^{\ast}(X)\xrightarrow{\sigma} \HO_{\ast}(X) \xrightarrow{\cong} \h^{\ast}(X,\BC)
\end{equation}
is a graded ring isomorphism, where the last isomorphism comes from the degeneration of the Hodge--de Rham spectral sequence. 

For an object $\CE \in \Db(X)$ C{\u{a}}ld{\u{a}}raru \cite{CaldararuMukaiI} introduced the Hochschild Chern character $\chHH(\CE) \in \HH_0(X)$. It is uniquely defined by satisfying the equality
\begin{equation}
	\Tr_{X \times X}(\mu \circ \chHH(\CE)) = \Tr_X(\mu_\CE)
\end{equation}
for all $\mu \in \HH^\ast(X)$, where $\Tr_{X\times X}$ and $\Tr_X$ are the trace morphisms on $X \times X$ and $X$ obtained from the Serre duality pairing. It is shown in \cite[Thm.\ 4.5]{CaldararuMukaiII} that the HKR isomorphism identifies the Hochschild Chern character with the classical Chern character, i.e.\ $\Ihkr(\chHH(\CE))=\ch(\CE) \in \h^\ast(X,\BC)$. Therefore, we also have $\Ik(\chHH(\CE))=v(\CE)\in \h^\ast(X,\BC)$. 
\subsection{Hyper-Kähler cohomology and LLV algebra}
Let $X$ be a hyper-Kähler manifold of complex dimension $2n$, i.e.\ a simply connected compact Kähler manifold such that $\h^0(X,\Omega_X^2)$ is generated by an everywhere non-degenerate holomorphic two-form. The second cohomology $\h^2(X,\BZ)$ possesses an integral primitive quadratic form $\mathrm{q}=\mathrm{q}_X$ called the \textit{Beauville--Bogomolov--Fujiki (BBF)} form and has rank $b_2(X)$. We associate to $X$ its \textit{Mukai lattice} 
\[
(\tH(X,\BQ) \coloneqq \BQ \alpha \oplus \h^2(X,\BQ) \oplus \BQ\beta, \tilde{\mathrm{q}})
\] 
which is a quadratic space with a grading and Hodge structure. More precisely, the quadratic form $\tq$ restricts on $\h^2(X,\BQ)$ to the BBF form $\mathrm{q}$ and $\alpha$ and $\beta$ are isotropic elements orthogonal to $\h^2(X,\BQ)$ and satisfy $\tq(\alpha, \beta)=-1$. The elements $\alpha$ and $\beta$ are of degree $-2$ and $2$ respectively and carry the trivial rational Hodge structure. The space $\h^2(X,\BQ)$ has degree zero and carries the corresponding Tate twist of its usual Hodge structure. 
See \cite[Sec.\ 2.2]{BeckmannExtendedIntegral} for more details. 

Looijenga--Lunts \cite{LooijengaLunts} and Verbitsky \cite{VerbitskyCohomologyHK} introduced the \textit{Looijenga--Lunts--Verbitsky (LLV)} algebra $\Fg(X)$ naturally associated to the cohomology $\h^\ast(X,\BQ)$ of a hyper-Kähler manifold. For another account, see \cite{GKLRLLV}.

We denote by $h\in \End(\h^\ast(X,\BQ))$ the \textit{cohomological grading operator} acting on $\h^k(X,\BQ)$ via $(k-2n)\id$. To an element $\omega \in \h^2(X,\BQ)$ we associate the operator $e_\omega = \omega \cup \_ \in \End(\h^\ast(X,\BQ))$ of cupping with $\omega$. We say that $\omega$ has the \textit{Hard Lefschetz property} if there exists an operator $\Lambda_\omega \in \End(\h^\ast(X,\BQ))$ such that $(e_\omega,h,\Lambda_\omega)$ forms an $\Fs\Fl_2$-triple. 

The LLV algebra $\Fg(X)\subset \End(\h^\ast(X,\BQ))$ is the Lie subalgebra generated by all such $\Fs\Fl_2$-triples for all $\omega$ having the Hard Lefschetz property. The main result of Looijenga--Lunts and Verbitsky is then the Lie algebra isomorphism
\begin{equation*}
	\Fg(X) \cong \Fs\Fo(\tH(X,\BQ)).
\end{equation*}
The $\Fg(X)$-structure of $\tH(X,\BQ)$ is defined by the conditions $e_\omega(\alpha)=\omega$, $e_\omega(\mu)=q(\omega,\mu)\beta$ and $e_\omega(\beta)=0$ for all classes $\omega, \mu \in \h^2(X,\BQ)$. 

Let $\textup{SH}(X,\Q)$ be the \textit{Verbitsky component}, i.e.\ the graded subalgebra of $\textup{H}^*(X,\Q)$ generated by $\textup{H}^2(X,\Q)$. Verbitsky \cite{BogomolovVerbitskysResults,VerbitskyCohomologyHK} proved the existence of a graded morphism $\psi\colon \SH(X,\BQ) \to \Sym^n(\tilde{\h}(X,\BQ))$ sitting in a short exact sequence
\begin{equation}
\label{eq:ses_Verbitskycomponentasg(X)module}
0 \rightarrow \textup{SH}(X,\Q) \xrightarrow{\psi} \textup{Sym}^n(\tilde{\textup{H}}(X,\Q)) \xrightarrow{\Delta} \textup{Sym}^{n-2}(\tH(X,\Q))\rightarrow 0.
\end{equation}
Here, the map $\Delta$ is the Laplacian operator defined on pure tensors via
\[
v_1 \cdots v_n \mapsto \sum_{i<j} \tq(v_i,v_j) v_1 \cdots  \hat{v_i} \cdots \hat{v_j} \cdots v_n.
\]
The map $\psi$ is uniquely determined (up to scaling) by the condition that it is a morphism of $\mathfrak{g}(X)$-modules. 

The $n$-th symmetric power $\Sym^n(\tH(X,\BQ))$ inherits the structure of a $\Fg(X)$-module by letting $\Fg(X)$ act by derivations. The inclusion realizes $\textup{SH}(X,\Q)$ as an irreducible Lefschetz module \cite{VerbitskyCohomologyHK}. We fix once and for all a choice of $\psi$ by setting $\psi(1)=\alpha^n/n!$. 
The orthogonal projection onto the subspace $\SH(X,\BQ)$ will be denoted by 
\[
T \colon \Sym^n(\tH(X,\BQ))\to \SH(X,\BQ).
\]
\subsection{Hochschild LLV algebra}
The two previous subsections have a common ground which will be frequently used. 

Let us consider the \textit{Hodge grading operator} $h' \in \End(\h^\ast(X,\BC))$ defined via
\begin{equation*}
	h'|_{\h^{p,q}(X)} = (q-p) \id,
\end{equation*}
i.e.\ the graded pieces of $\h^\ast(X,\BC)$ induced from the grading given by $h'$ agree with the columns of the Hodge diamond. We will say that an element $x$ is of \textit{Hodge type} if $h'(x)=0$, i.e.\ if 
\[
x \in \bigoplus_p \h^{p,p}(X).
\]
An element $\mu \in \HT^2(X)$ induces an operator $e_\mu \coloneqq \mu \lrcorner \_ \in \End(\h^\ast(X,\BC))$ by contraction. As before, we say that $\mu$ has the \textit{Hard Lefschetz property}, if there exists an operator $\Lambda_\mu$ such that $(e_\mu, h', \Lambda_\mu)$ forms a complex $\Fs\Fl_2$-triple. 

Analogously to the previous case, we can consider the complex Lie subalgebra $\Fg'(X) \subset \End(\h^\ast(X,\BC))$ generated by all $\Fs\Fl_2$-triples for all $\mu$ having the Hard Lefschetz property. The following is \cite[Prop.\ 2.8]{TaelmanDerHKLL}, see also \cite[Sec.\ 9]{VerbitskyMirrorSymmetry} for an earlier account, where the result is essentially already proved. 
\begin{thm}[Taelman, Verbitsky]
\label{thm:Taelman_Verbitsky_Equality_Lie_Algebras}
There is an equality
\[
\Fg(X)_\BC = \Fg'(X) \subset  \End(\h^\ast(X,\BC))
\]
of complex Lie subalgebras. 
\end{thm}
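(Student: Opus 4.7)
The plan is to establish both inclusions $\Fg'(X) \subseteq \Fg(X)_\BC$ and $\Fg(X)_\BC \subseteq \Fg'(X)$ by exhibiting explicit relationships between their generating $\Fs\Fl_2$-triples. The translation between them will be implemented through the symplectic form and the ring isomorphism \eqref{eq:iso_rings_HH_HT_HO}.

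First I would check that the grading operator $h'$ lies in $\Fg(X)_\BC$. Since $\Fg(X)_\BC \cong \Fs\Fo(\tH(X,\BC))$ and the extended Mukai lattice $\tH(X,\BC)$ carries a natural Hodge structure, the Hodge grading element on $\tH$ (acting as multiplication by $q-p$ on each $(p,q)$-piece) is skew-symmetric for the complexified BBF form and hence sits in $\Fs\Fo(\tH(X,\BC))$. Because the Hodge structure on $\h^\ast(X,\BC)$ is induced from that on $\tH$ through the $\Fg(X)$-representation (via Verbitsky's embedding $\psi$ for the Verbitsky component and the analogous statement for the other irreducible constituents in \eqref{eq:intro_decomp_cohomology_irred_repr}), this element acts on $\h^{p,q}(X)$ by $(q-p)\mathrm{id}$, so it coincides with $h'$. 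The symmetric argument yields $h \in \Fg'(X)$.

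The core step is translating between the raising operators $e_\omega$ for $\omega \in \h^2(X,\BQ)$ and the contraction operators $e_\mu$ for $\mu \in \HT^2(X)$. Given an HL polyvector $\mu \in \HT^2$, decompose it as $\mu = \mu^{0,2} + \mu^{1,1} + \mu^{2,0}$ according to $\HT^2 = \h^2(X,\CO_X) \oplus \h^1(X,\CT_X) \oplus \h^0(X,\Lambda^2\CT_X)$. Via the symplectic form $\sigma$, this identifies with the Hodge decomposition of $\h^2(X,\BC)$. The component $e_{\mu^{0,2}}$ is simply cup product by a class in $\h^{0,2}(X) \subset \h^2(X,\BC)$, which lies in $\Fg(X)_\BC$ (any class of non-trivial BBF-norm has the HL property over $\BC$, and one extends by the Zariski-density of HL classes in $\h^2(X,\BC)$). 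For $\mu^{1,1}$, contraction can be written as a commutator between cup product by $\sigma(\mu^{1,1}) \in \h^{1,1}(X,\BC)$ and contraction by $\sigma$; both operators lie in $\Fg(X)_\BC$ (contraction by $\sigma$ being the adjoint of cupping by $\sigma$ with respect to the natural BBF-duality pairing inside $\Fs\Fo(\tH(X,\BC))$), so $e_{\mu^{1,1}} \in \Fg(X)_\BC$. The case of $\mu^{2,0}$ is handled analogously by an iterated bracket. Combined with $h' \in \Fg(X)_\BC$, the lowering operator $\Lambda_\mu$ in the $\Fs\Fl_2$-triple is uniquely determined and also lies in $\Fg(X)_\BC$, yielding $\Fg'(X) \subseteq \Fg(X)_\BC$; the reverse inclusion follows symmetrically.

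The main obstacle is verifying the precise commutator identities that realize contraction operators as iterated brackets of cup products with the symplectic contraction, together with tracking the Todd-class corrections in $\IK$ and the sign conventions when translating between $\HT^\ast$ and $\HO_\ast$ via $\sigma$. Once these identities are established, the structural content — that cup products and contractions generate the same Lie subalgebra of $\End(\h^\ast(X,\BC))$ — follows from the observation that each side contains mutually conjugate sets of $\Fs\Fl_2$-triples of the correct dimensional count to exhaust $\Fs\Fo(\tH(X,\BC))$.
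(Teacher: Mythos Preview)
The paper does not supply its own proof of this theorem: it is stated with attribution to Taelman \cite[Prop.\ 2.8]{TaelmanDerHKLL} and Verbitsky \cite[Sec.\ 9]{VerbitskyMirrorSymmetry}, and used as a black box throughout. So there is no paper-proof to compare against.

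That said, your sketch is broadly in line with how Taelman's argument proceeds, and the paper itself later records precisely the commutator identities you allude to: Lemma~\ref{lem:Lagrangian_identification_operators_LLV} shows $[e_\sigma,e_\mu] = e_{-\mu\lrcorner\sigma}$ for $\mu\in\h^1(X,\CT_X)$, and the proof of Proposition~\ref{prop:Lagrangian_comparison_kernels_LLV} observes that contraction by $\h^0(X,\Lambda^2\CT_X)$ agrees up to scalar with $\Lambda_\sigma$. So the iterated-bracket picture you describe is exactly what happens, and the ``main obstacle'' you flag is not a serious one.

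One correction: your mention of ``Todd-class corrections in $\IK$'' is a red herring here. The comparison of $\Fg(X)_\BC$ and $\Fg'(X)$ is purely between cup-product operators $e_\omega$ and contraction operators $e_\mu$ acting on $\h^\ast(X,\BC)$; Hochschild cohomology and the twisted HKR isomorphism $\IK$ play no role. The relevant identification is simply $\sigma\colon\CT_X\cong\Omega_X^1$ inducing $\HT^2(X)\cong\h^2(X,\BC)$, with no Todd twist. You should also be more careful with the claim that the Hodge structure on all of $\h^\ast(X,\BC)$ is ``induced from that on $\tH$ through the $\Fg(X)$-representation'': this is true but requires an argument (it is essentially the content of the Weil operator lying in $\Fg(X)_\BC$, see \cite{LooijengaLunts}), and is not an immediate consequence of the Verbitsky-component embedding alone.
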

This result sheds new light on the LLV algebra. For example, the operators in $\Fg(X)_\BC$ having degree two for the grading given by $h'$ are exactly given by contraction with elements in $\HT^2(X)$. Throughout the paper, we will frequently use the above identification and switch between the gradings $h$ and $h'$. 
\section{Atomic objects}
\label{sec:atomic_objects}
We discuss Definition~\ref{defn:atomic_objects_via_Mukaivector} and general results about atomic objects. 
We fix a hyper-Kähler manifold $X$ of dimension $2n>2$. 
\subsection{Lie theoretic properties}
\label{subsec:atomic_Lie_theoretic}
Let $\CE$ be a sheaf on $X$ or an object in $\Db(X)$. Recall that the property of $\CE$ being atomic is a condition on the Lie subalgebra $\Ann(v(\CE))$.

\begin{prop}
	\label{prop:atomic_annihilator_codimension}
	An object $\CE\in \Db(X)$ is atomic if and only if $\Ann(v(\CE)) \subset \Fg(X)$ is a Lie subalgebra of codimension $b_2(X)+1$. 
\end{prop}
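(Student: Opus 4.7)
The plan is to exploit that the codimension of $\Ann(v(\CE))$ in $\Fg(X)$ equals the dimension of its $\Fg(X)$-orbit, and to combine this with a short orbit-dimension analysis on the irreducible summands of $\h^\ast(X,\BQ)$.

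For the forward implication, assume $\CE$ is atomic, so $\Ann(v(\CE))=\Ann(\tv)$ for some non-zero $\tv\in\tH(X,\BQ)$. Via $\Fg(X)\cong\Fs\Fo(\tH(X,\BQ))$, the orbit $\Fg(X)\cdot\tv\subset\tH(X,\BQ)$ is (a connected component of) the quadric $\{w:\tq(w)=\tq(\tv)\}$ when $\tv$ is non-isotropic, and the punctured null cone when $\tv$ is isotropic. In both cases it has dimension $\dim\tH(X,\BQ)-1=b_2(X)+1$, which yields the desired codimension.

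For the converse, suppose $\Ann(v(\CE))$ has codimension $b_2(X)+1$. Decomposing $v(\CE)=\sum v(\CE)_\lambda$ via \eqref{eq:decomposition_Mukaivector}, and noting that the $\Fg(X)$-trivial summand is annihilated by all of $\Fg(X)$, we have $\Ann(v(\CE))=\bigcap_\lambda \Ann(v(\CE)_\lambda)$ where the intersection runs over non-trivial isotypic components. The central representation-theoretic input I would need is that, for every non-trivial irreducible $\Fs\Fo(\tH(X,\BQ))$-module $V_\lambda$ appearing in $\h^\ast(X,\BQ)$, a non-zero vector with stabiliser of codimension at most $b_2(X)+1$ must lie in the $\Fg(X)$-orbit of a vector whose annihilator equals $\Ann(\tv_\lambda)$ for some non-zero $\tv_\lambda\in\tH(X,\BQ)$; concretely such minimal-orbit vectors occur both in $\tH(X,\BQ)$ itself and in Verbitsky-type summands $\Sym^k_0\tH(X,\BQ)$ through images of pure tensors $\tv_\lambda^{\otimes k}$ with $\tv_\lambda$ isotropic. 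Granting this input, each $\Ann(v(\CE)_\lambda)=\Ann(\tv_\lambda)$, and since the fixed locus of $\Ann(\tv_\lambda)$ on $\tH(X,\BQ)$ is the line $\BQ\tv_\lambda$, the intersection $\bigcap_\lambda \Ann(\tv_\lambda)$ retains codimension $b_2(X)+1$ only when all $\tv_\lambda$ are proportional to a common $\tv\in\tH(X,\BQ)$; thus $\Ann(v(\CE))=\Ann(\tv)$ and $\CE$ is atomic.

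The main obstacle lies in the representation-theoretic input above: one must classify the codimension-$\leq(b_2(X)+1)$ stabilisers in each irreducible $\Fs\Fo(\tH(X,\BQ))$-summand of $\h^\ast(X,\BQ)$ and show that all such stabilisers arise as $\Ann(\tv)$ for some $\tv\in\tH(X,\BQ)$. For the standard representation and its harmonic symmetric powers this follows from the forward computation; for any other irreducible summand one relies on standard Lie-theoretic lower bounds on minimal nonzero orbits of $\Fs\Fo$-modules (for instance the minimal nilpotent orbit of the adjoint representation has dimension $2(b_2(X)-1)$, which strictly exceeds $b_2(X)+1$ as soon as $b_2(X)\geq 4$, with analogous stronger bounds for other irreducibles).
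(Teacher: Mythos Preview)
Your forward direction is fine and matches the paper. The converse, however, has a genuine gap precisely where you flag it: the ``representation-theoretic input'' is not established, and there is no clear path to establishing it. You would need to know, for \emph{every} irreducible $\Fg(X)$-summand $V_\lambda$ occurring in $\h^\ast(X,\BQ)$, that any non-zero vector with annihilator of codimension at most $b_2(X)+1$ has annihilator equal to some $\Ann(\tv)$. But the decomposition of $\h^\ast(X,\BQ)$ into irreducibles is not known for a general hyper-K\"ahler manifold, so you cannot argue case by case. Even the one example you give is borderline: the minimal nilpotent orbit in the adjoint representation has dimension $2(b_2(X)-1)$, which \emph{equals} $b_2(X)+1$ when $b_2(X)=3$, so your inequality is not strict there. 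And isotypic components with multiplicity larger than one introduce further complications you do not address.

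The paper's proof bypasses the irreducible decomposition entirely. It works instead with the cohomological obstruction map $\obs_\CE\colon\HT^2(X)\to\h^\ast(X,\Omega_X^\ast)$, using Theorem~\ref{thm:Taelman_Verbitsky_Equality_Lie_Algebras} to identify $\HT^2(X)$ with the degree-two part (for the grading $h'$) of $\Fg(X)_\BC$. The key observation is that the codimension hypothesis forces $\Ker(\obs_\CE)\subset\HT^2(X)$ to have codimension exactly one; then the $\Fs\Fl_2$-triples $(e_\mu,h',\Lambda_\mu)$ attached to Hard Lefschetz elements $\mu$ in this kernel generate a Lie subalgebra of the correct dimension, which is shown to coincide with both $\Ann(v(\CE))_\BC$ and $\Ann(\tv)$ for a $\tv$ determined by a perfect pairing on $\HT^2(X)\times\tH^{1,1}(X,\BC)$. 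Rationality of $\tv$ comes from Lemma~\ref{lem:Kernel_element_defined_over_Q}. This argument never needs to know which irreducibles appear in $\h^\ast(X,\BQ)$, which is why it goes through uniformly.
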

\begin{proof}
	If $\CE$ is atomic, then $\Ann(v(\CE))=\Ann(\tv)$ for some non-zero $\tv \in \tH(X,\BQ)$. Recall that $\Fg(X)_\BC \cong \Fs\Fo(b_2(X)+2)$. If $\tilde{q}(\tv) \neq 0$, we immediately get that $\Ann(\tv) \cong \Fs\Fo(b_2(X)+1)$. It follows from a straightforward calculation that the condition on the codimension remains valid also in the case $\tilde{q}(\tv)=0$, see also the proof of the lemma below. 
	
	Let us now assume that $\Ann(v(\CE)) \subset \Fg(X)$ has codimension $b_2(X)+1$. We will study the cohomological obstruction map
	\[
	\obs_\CE \colon \HT^2(X) \to \h^\ast(X,\Omega_X^\ast), \quad \mu \mapsto \mu \lrcorner v(\CE). 
	\]
	Since $v(\CE)$ is of Hodge type, we have $h' \in \Ann(v(\CE))$. If $\obs_\CE$ would vanish identically, i.e.\ $\Ker(\obs_\CE) = \HT^2(X)$, we would know from Theorem~\ref{thm:Taelman_Verbitsky_Equality_Lie_Algebras} that for all $\mu \in \HT^2(X)$ we have $e_\mu \in \Ann(v(\CE))_\BC$.
	
	In particular, for any such $\mu$ having the Hard Lefschetz property with respect to $h'$, we would have
	\begin{equation}
	\label{eq:28714}
		0=h'(v(\CE))=[e_\mu,\Lambda_\mu](v(\CE))= e_\mu(\Lambda_\mu(v(\CE))) - \Lambda_\mu(e_\mu(v(\CE))) =e_\mu(\Lambda_\mu(v(\CE))).
	\end{equation}
	Since $e_\mu$ is injective when restricted to $\HO_{-2}(X)$, we deduce that $\Lambda_\mu(v(\CE))=0$ for all such $\mu \in \HT^2(X)$. However, as by Theorem~\ref{thm:Taelman_Verbitsky_Equality_Lie_Algebras} $\Fg(X)_\BC$ is generated by all $\Fs\Fl_2$-triples associated to all $\mu \in \HT^2(X)$ having the Hard Lefschetz property, we would deduce that $\Ann(v(\CE))_\BC=\Fg(X)_\BC$ which contradicts our assumption.
	
	Hence, the cohomological obstruction map $\obs_\CE$ does not vanish identically. If $W=\Ker(\obs_\CE)\subset \HT^2(X)$ has codimension one, then the arguments above imply that for all Hard Lefschetz elements $\mu \in W$ we have that $e_\mu,\Lambda_\mu \in \Ann(v(\CE))_\BC$. The Lie subalgebra $\Fh\subset \Fg(X)_\BC$ generated by $h'$ and all $e_\mu,\Lambda_\mu$ for all $\mu\in W$ having the Hard Lefschetz property has dimension $(b_2(X)^2+b_2(X))/2$ as follows from \cite[Thm.\ 2.7]{GKLRLLV}. Moreover, from \eqref{eq:28714} we infer the inclusion $\Fh \subset \Ann(v(\CE))_\BC$ of Lie algebras. The assumption on the codimension of $\Ann(v(\CE))_\BC \subset \Fg(X)_\BC$ yields that the inclusion $\Fh \subset \Ann(v(\CE))_\BC$ must already be an equality. 
	
	Furthermore, let us consider the pairing
	\begin{equation*}
		\HT^2(X) \times \left( \BC\alpha \oplus \tH^{1,1}(X,\BC) \oplus \BC \beta \right) \to \BC \bar{\sigma}, \quad (\mu,x) \mapsto \mu \lrcorner x
	\end{equation*}
	obtained from considering $\tH(X,\BC)$ as a $\Fg(X)_\BC$-module. Since this pairing is non-degenerate, see for example \cite[Lem.\ 6.3]{MarkmanObs}, we obtain that there is an element $\tv \in \BC\alpha \oplus \tH^{1,1}(X,\BC) \oplus \BC \beta$ unique up to scaling with the property that it pairs trivially with the subspace $W$. Since $h'(\tv)=0$, the above discussion shows $\Ann(v(\CE))_\BC = \Fh \subset \Ann(\tv)$. We claim that the inclusion is an equality.
	
	Indeed, we know by assumption that there exists an element $\mu \in \HT^2(X)$ having the Hard Lefschetz property such that $e_\mu$ is not contained in $\Ann(v(\CE))_\BC$. Moreover, the dual operator $\Lambda_\mu$ to $e_\mu$ satisfying $[e_\mu,\Lambda_\mu]=h'$ is by \eqref{eq:28714} as well not contained in $\Ann(v(\CE))_\BC$. Furthermore, the $b_2(X)-1$-dimensional subspace of operators generated as a vector space by $[e_\tau,\Lambda_\mu]$ for all $\tau \in W$ intersects the subspace $\Ann(v(\CE))_\BC \subset \Fg(X)_\BC$ trivially. This implies that the inclusion
	\[
	\Ann(\tv) \subset \Fg(X)_\BC
	\]
	has codimension at least $b_2(X)+1$, which is exactly the codimension of the inclusion $\Ann(v(\CE))_\BC \subset \Fg(X)$. This yields the assertion. 	
	
	From Lemma~\ref{lem:Kernel_element_defined_over_Q} we can now deduce that $\tv$ is already defined over $\BQ$ and $\CE$ is, therefore, atomic. 
	
	The case of $\Ker(\obs_\CE)\subset \HT^2(X)$ having higher codimension can be excluded using the same line of arguments. We leave the details to the reader. 
\end{proof}
\begin{lem}
	\label{lem:Kernel_element_defined_over_Q}
	If $\Fh \subset \Fg(X)$ is a Lie subalgebra and $\tv \in \tH(X,\BC)$ is such that $\Fh_\BC = \Ann(\tv) \subset \Fg(X)_\BC$, then $\tv \in \tH(X,\BQ)$. 
\end{lem}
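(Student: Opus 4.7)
The plan is to identify the line $\BC\tv$ with the fixed subspace $\tH(X,\BC)^{\Fh_\BC}$ and then descend to $\BQ$ using that $\Fh$ is already rational. Concretely, since $\Fh \subset \Fg(X)$ is a $\BQ$-subspace, the subspace
\[
\tH(X,\BQ)^{\Fh} \coloneqq \{x \in \tH(X,\BQ) \suchthat A \cdot x = 0 \text{ for all } A \in \Fh\}
\]
is a $\BQ$-subspace of $\tH(X,\BQ)$ whose complexification equals $\tH(X,\BC)^{\Fh_\BC}$. If I can show that $\tH(X,\BC)^{\Fh_\BC} = \BC \tv$, then $\tH(X,\BQ)^\Fh$ is a one-dimensional $\BQ$-line, any generator of which is a scalar multiple of $\tv$; after rescaling I may then take $\tv \in \tH(X,\BQ)$.

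Using the hypothesis $\Fh_\BC = \Ann(\tv)$ and the isomorphism $\Fg(X)_\BC \cong \Fs\Fo(\tH(X,\BC),\tq)$, the remaining task is to show that the stabilizer of a non-zero vector in the standard representation of the complex orthogonal Lie algebra has precisely the spanned line as fixed subspace. I handle this by splitting into two cases according to whether $\tv$ is isotropic. If $\tq(\tv) \neq 0$, the decomposition $\tH(X,\BC) = \BC\tv \oplus \tv^\perp$ identifies $\Ann(\tv)$ with $\Fs\Fo(\tv^\perp)$, which acts trivially on $\BC\tv$ and through the standard representation on $\tv^\perp$; since $\dim \tv^\perp = b_2(X)+1 \geq 4$, this standard representation has no non-zero invariants, so the fixed subspace is exactly $\BC\tv$.

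The isotropic case is the main technical step. If $\tq(\tv)=0$ and $\tv \neq 0$, I pick a complementary isotropic vector $\tw$ with $\tq(\tv,\tw)=1$ and set $W \coloneqq (\BC\tv \oplus \BC\tw)^\perp$. Writing $\Fs\Fo(\tH(X,\BC))$ in block form with respect to $\tH(X,\BC) = \BC\tv \oplus W \oplus \BC\tw$, a direct calculation identifies $\Ann(\tv)$ with the semidirect product $\Fs\Fo(W) \ltimes W$. Applying such an operator to an arbitrary vector $\alpha_1 \tv + y + \alpha_2 \tw$ and demanding that the image vanish for every $B \in \Fs\Fo(W)$ and every $p \in W$ forces first $\alpha_2 = 0$ (varying $p$ while keeping $B=0$) and then $y = 0$ (using that $\Fs\Fo(W)$ has no non-zero invariants in $W$, valid since $\dim W = b_2(X) \geq 3$). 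This leaves only $\BC\tv$ as fixed subspace, completing the case analysis and hence the proof.
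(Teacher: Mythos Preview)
Your proof is correct. Both approaches ultimately rest on the fact that a nonzero vector in the standard representation of $\Fs\Fo$ is determined up to scaling by its annihilator, but the packaging differs. The paper encodes this as the injectivity of a morphism of varieties
\[
\varphi \colon \BP(\tH(X,\BC)) \to \mathrm{Gr}\!\left(\tbinom{b_2(X)+1}{2},\,\Fg(X)_\BC\right),\qquad \ell \mapsto \Ann(\ell),
\]
checks well-definedness by the same isotropic/non-isotropic case split you use, and then observes that $\varphi$ is defined over $\BQ$, so a $\BQ$-point of the target pulls back to a $\BQ$-point of the source. You instead compute the fixed subspace $\tH(X,\BC)^{\Fh_\BC}$ directly and descend via $\tH(X,\BQ)^{\Fh}$. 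Your route is more elementary and, in fact, makes explicit the injectivity of $\varphi$ that the paper simply asserts: your block computation in the isotropic case (identifying $\Ann(\tv)$ with $\Fs\Fo(W)\ltimes W$ and showing its only invariants are $\BC\tv$) is precisely the missing verification. The paper's formulation, on the other hand, is cleaner to state and would generalise more readily to analogous situations where one wants to transport rationality through a parametrisation of subalgebras.
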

\begin{proof}
	We extend the beautiful argument from the proof of \cite[Lem.\ 6.9]{MarkmanObs}. 
	
	Consider the natural map
	\[
	\varphi \colon \BP(\tH(X,\BC)) \to \mathrm{Gr}\left( {b_2(X)+1 \choose 2}, \Fg(X)_\BC \right), \quad \ell \mapsto \Ann(\ell) \subset \Fg(X)_\BC.
	\]
	This morphism is well-defined, i.e.\ for each $0 \neq \ell\in \tH(X,\BC)$ the Lie subalgebra $\Ann(\ell) \subset \Fg(X)_\BC$ has codimension $b_2(X)+1$. Indeed, if $\tilde{\mathrm{q}}(\ell) \neq 0$, then we have the natural isomorphism \[
	\Ann(\ell)\cong \Fs\Fo(\ell^\perp) \cong \Fs\Fo(b_2(X)+1).
	\]
	In the case $\tilde{\mathrm{q}}(\ell)=0$, the natural map of Lie groups
	\[
	\mathrm{Fix}(\ell)\twoheadrightarrow \mathrm{SO}(\ell^\perp / \langle \ell \rangle ) \cong \mathrm{SO}(b_2(X))
	\]
	reveals that the Lie subgroup $\mathrm{Fix}(\ell) \subset \mathrm{SO}(b_2(X)+2)$ splits as a semidirect product. A straightforward calculation shows that the other factor consists of unipotent matrices acting trivially on $\ell^\perp/\langle \ell \rangle$ and $\ell$ and is of dimension $b_2(X)$. 
	
	Since $\varphi$ is injective as well as defined over $\BQ$, we obtain the assertion. 
\end{proof}
As shown in the proof of Proposition~\ref{prop:atomic_annihilator_codimension}, if $\CE$ is atomic, then its annihilator $\Ann(v(\CE))\subset \Fg(X)$ is the largest non-trivial proper Lie subalgebra of the LLV algebra of the form $\Ann(v)$ for an element $v\in \h^\ast(X,\BQ)$ with $h'(v)=0$. 

The annihilator $\Ann(v(\CE))$ measures, in some sense, the complexity of the Mukai vector $v(\CE)$. For example, if $\CE$ is atomic, to its Mukai vector one can associate a vector $\tv\in \tH(X,\BQ)$ inside the much smaller vector space $\tH(X,\BQ)$ still encoding most information about the vector. In that sense, the annihilator $\Ann(v(\CE))\subset \Fg(X)$ having low codimension corresponds to the Mukai vector of $\CE$ having low complexity. 

However, it is not in general true that one can recover (the $\BQ$-line spanned by) $v(\CE)$ from the knowledge of $\Ann(v(\CE))$ even if $\CE$ is atomic. The naive idea would be to consider $\h^\ast(X,\BQ)$ as a representation of $\Ann(v(\CE))$ and study its trivial representations. However, viewing $\h^\ast(X,\BQ)$ as a module over the larger Lie algebra $\Fg(X)$, there can already be (many) trivial representations. 

On the positive side, the Mukai vector of an atomic object is still severely restricted, as we will demonstrate now. 
As alluded to in the introduction, if we restrict for $\CE$ atomic the action of $\Ann(v(\CE))$ to the Verbitsky component, there exists a unique one-dimensional trivial representation. 
\begin{prop}
	\label{prop:Atomic_Verbitsky_component_one_trivial_representation}
	Let $\CE$ be an atomic object and $\tv\in \tH(X,\BQ)$ an element such that $\Ann(v(\CE)) = \Ann(\tv)$. Consider the Verbitsky component $\SH(X,\BQ)$ as an $\Ann(v(\CE))$-module. This representation has a unique trivial subrepresentation, which is spanned by $T(\tv^n) \in \SH(X,\BQ)$ and $v(\CE)_\SH \in \BQ \langle T(\tv^n) \rangle$.  
\end{prop}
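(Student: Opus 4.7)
The plan is to show that $T(\tv^n)$ is a non-zero $\Ann(v(\CE))$-invariant vector in $\SH(X,\BQ)$, that the invariant subspace $\SH(X,\BC)^{\Ann(\tv)}$ is one-dimensional, and finally that $v(\CE)_\SH$ also lies in this one-dimensional subspace.

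First I would observe that $T(\tv^n)$ is $\Ann(\tv)$-invariant: the orthogonal projection $T$ is $\Fg(X)$-equivariant since $\SH(X,\BQ)\subset \Sym^n\tH(X,\BQ)$ is a $\Fg(X)$-subrepresentation, and $\tv^n$ is annihilated by $\Ann(\tv)$ because the latter acts by derivations. For the non-vanishing of $T(\tv^n)$, the kernel of $T$ equals $\tilde{Q}\cdot \Sym^{n-2}\tH(X,\BQ)$, where $\tilde{Q}\in \Sym^2\tH(X,\BQ)$ is the tensor corresponding to the nondegenerate form $\tq$ via the $\Fg(X)$-equivariant orthogonal decomposition $\Sym^n\tH(X,\BQ) = \SH(X,\BQ) \oplus \tilde{Q}\cdot \Sym^{n-2}\tH(X,\BQ)$. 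Viewing $\Sym^{\bullet}\tH(X,\BC)$ as the polynomial ring on $\tH(X,\BC)^\vee$, the quadric $\tilde{Q}$ is irreducible and $\tv$ is a non-zero linear form, so $\tv^n$ is not divisible by $\tilde{Q}$, giving $T(\tv^n)\neq 0$.

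To show $\dim_\BC \SH(X,\BC)^{\Ann(\tv)}=1$, I would iterate the above $\Fg(X)$-equivariant decomposition to obtain
\[
\Sym^m \tH(X,\BC) = \bigoplus_{j\geq 0} \tilde{Q}^j \cdot H^{m-2j},
\]
where $H^k\subset \Sym^k\tH(X,\BC)$ is the kernel of the Laplacian (so $H^n \cong \SH(X,\BC)$), a decomposition compatible with taking $\Ann(\tv)$-invariants. A direct computation then shows $\dim (\Sym^m\tH(X,\BC))^{\Ann(\tv)} = \lfloor m/2\rfloor +1$ for every $m\geq 0$: in the case $\tq(\tv)\neq 0$ this follows from the classical branching rule for $\Fs\Fo(b_2(X)+2)\downarrow \Fs\Fo(b_2(X)+1)$ applied to symmetric powers of the standard representation, while in the isotropic case $\tq(\tv)=0$ one needs to describe $\Ann(\tv)$ explicitly as the semidirect product of a nilpotent Lie algebra with $\Fs\Fo(b_2(X))$ and compute the invariants of $\Sym^m\tH$ by hand. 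Combined with the obvious equality $\dim (H^0)^{\Ann(\tv)}=1$, an induction on $m$ then yields $\dim (H^m)^{\Ann(\tv)}=1$ for all $m$, and in particular $\dim \SH(X,\BC)^{\Ann(\tv)}=1$.

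To conclude, I would use that the decomposition \eqref{eq:intro_decomp_cohomology_irred_repr} is $\Fg(X)$-equivariant, so the projection $(\_)_\SH$ is $\Fg(X)$-equivariant as well. Since $\Ann(v(\CE))=\Ann(\tv)$ annihilates $v(\CE)$, it annihilates each isotypic component; in particular $v(\CE)_\SH \in \SH(X,\BQ)^{\Ann(\tv)}$. As both $v(\CE)_\SH$ and $T(\tv^n)$ lie in the one-dimensional subspace $\SH(X,\BC)^{\Ann(\tv)}$ and are defined over $\BQ$, we conclude $v(\CE)_\SH \in \BQ\langle T(\tv^n)\rangle$. The main technical obstacle is the dimension count $\dim (\Sym^m\tH)^{\Ann(\tv)} = \lfloor m/2\rfloor +1$ in the isotropic case, since there $\Ann(\tv)$ fails to be reductive and one cannot directly appeal to standard branching theorems.
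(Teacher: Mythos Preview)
Your proposal is correct, and the strategy is sound. The argument that $T(\tv^n)\neq 0$ via irreducibility of $\tilde Q$ in the polynomial ring is clean, the harmonic decomposition $\Sym^m\tH=\bigoplus_j \tilde Q^j H^{m-2j}$ is $\Ann(\tv)$-equivariant (since $\tilde Q$ is $\Fg(X)$-invariant), and the induction on $m$ extracting $\dim(H^m)^{\Ann(\tv)}=1$ from the count $\dim(\Sym^m\tH)^{\Ann(\tv)}=\lfloor m/2\rfloor+1$ is valid. In the isotropic case the count can indeed be verified directly: writing $\tH=\BC\tv\oplus\BC\tv'\oplus W$ with $W$ non-degenerate, the $\Fs\Fo(W)$-invariants in $\Sym^m\tH$ are spanned by monomials $\tv^a(\tv')^bq_W^c$, and imposing invariance under the nilpotent part of $\Ann(\tv)$ yields a simple recursion whose free parameters are exactly the coefficients with $b=0$, i.e.\ $\lfloor m/2\rfloor+1$ of them.

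However, your route is genuinely different from the paper's. The paper first uses an integrated automorphism of $\Fg(X)$ to reduce to the normal form $\tv=\alpha+k\beta$. With this in hand, any $\Ann(\tv)$-invariant $x\in\SH(X,\BQ)$ is killed by $h'$ and by all contractions $e_\mu$ for $\mu\in\h^1(X,\CT_X)$; a result of Looijenga--Lunts then forces $x$ into the span of the powers $\mathsf q_2^i$. Finally the single explicit operator $2ke_\omega-q(\omega)\Lambda_\omega\in\Ann(\tv)$ gives a recursion on the coefficients of the $\mathsf q_2^i$, pinning $x$ down to a scalar multiple of $T(\tv^n)$. This handles the isotropic and non-isotropic cases uniformly, without any branching computation, at the cost of invoking the Looijenga--Lunts description of twistor-invariant classes in $\SH$. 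Your argument, by contrast, is purely representation-theoretic and self-contained, but requires a case split and the explicit invariant computation for the non-reductive stabiliser. Each approach has its merits: the paper's is shorter and ties into the geometric theme of the article, while yours makes transparent that the statement is really a fact about $\Fs\Fo$-representations and would generalise more readily to other irreducible constituents $V_{(k)}$.
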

\begin{proof}
	It is easy to see that $0\neq T(\tv^n) \in \SH(X,\BQ)$ is annihilated by $\Ann(\tv) = \Ann(v(\CE))$. Moreover, the first part of the assertion then also gives $v(\CE)_\SH \in \BQ \langle T(\tv^n) \rangle$, because $v(\CE)$ is annihilated by $\Ann(v(\CE))$. 
	
	Hence, let us prove that there is a unique trivial subrepresentation. 
	This statement is independent of the complex structure for which $v(\CE)$ remains algebraic. Furthermore, it is invariant under an integrated automorphism of $\Fg(X)$ acting on $\SH(X,\BQ)$ and respecting the Hodge structure. We can therefore assume that $\tv$ in Definition~\ref{defn:atomic_objects_via_Mukaivector} is of the form
	\[
	\tv = \alpha + k \beta
	\]
	for $k\in \BQ$. 
	
	Let $x \in \SH(X,\BQ)$ be an element being annihilated by $\Ann(v(\CE))$. Since $h'\in \Ann(\tv)=\Ann(v(\CE))$, we know that $h'(x)=0$. Moreover, for any element $\mu \in \h^1(X,\CT_X)$ we have
	\[
	\mu \lrcorner \tv =0
	\]
	by bidegree reasons and, therefore, applying Theorem~\ref{thm:Taelman_Verbitsky_Equality_Lie_Algebras} we have $\mu \lrcorner x=0$. In particular, the element $x$ is of Hodge type for all possible complex structures of $X$. By \cite[Prop.\ 2.14]{LooijengaLunts}, the subalgebra of elements satisfying these properties is generated by powers $\mathsf{q_2}^i$ of the dual of the BBF form $\mathsf{q_2} \in \SH^4(X,\BQ)$. 
	
	It remains to determine the coefficients in front of each $\mathsf{q_2}^i$. 
	For $\omega \in \h^2(X,\BQ)$ having the Hard Lefschetz property for the grading operator $h$ we have
	\[
	\Lambda_\omega(\beta) = \frac{2}{q(\omega)}\omega \in \tH(X,\BQ). 
	\]
	This implies that $2ke_\omega - q(\omega)\Lambda_\omega \in \Ann(\tv) = \Ann(v(\CE))$. Moreover, using that $\tdd$ projects non-trivially to the Verbitsky component and \cite[Cor.\ 3.20]{JiangRR} we deduce
	\[
	0 \neq \Lambda_\omega \mathsf{q_2}^{i+1} \in \BQ \langle \mathsf{q_2}^i\wedge \omega \rangle
	\]
	which immediately yields that up to scaling $x=T(\tv^n)$. 
\end{proof}
\begin{rmk}
\label{rmk:atomic_recovers_extended_vector_on_Verbitsky_comp}
	In \cite[Sec.\ 4]{BeckmannExtendedIntegral} we assigned to certain coherent sheaves $\CE$ or, more generally, certain objects $\CE \in \Db(X)$ a so-called extended Mukai vector $\tv(\CE) \in \tH(X,\BQ)$. More precisely, we asked for the existence of a non-zero rational number $a$ such that
	\begin{equation}
		v(\CE)_\SH = a T(\tv(\CE)^n) \in \SH(X,\BQ).
	\end{equation}
	The proposition shows that atomic objects fulfill this definition.
\end{rmk}
The proof and, therefore, conclusion of the proposition remains true for all irreducible representations $V_\lambda\subset \h^\ast(X,\BQ)$ of the LLV algebra of the form $V_\lambda = V_{(k)}=V_{k\epsilon_1}$ where we use the notation of \cite[App.\ A]{GKLRLLV}. 

We note that the branching rules discussed in \cite[App.\ B.2]{GKLRLLV} immediately yield the same result for atomic objects $\CE\in \Db(X)$ such that the associated elemet $\tv \in \tH(X,\BQ)$ satisfies $\tq(\tv)\neq 0$. The branching rules also imply the following. 
\begin{prop}
	Let $\CE$ be an atomic object with $\tq(\tv) \neq 0$. Then $v(\CE)$ projects trivially to all irreducible representations which are not of the form $V_{(k)}$ with $k\in \BZ_{\geq 0}$. 
\end{prop}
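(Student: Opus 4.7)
The plan is to combine the $\Fg(X)$-equivariance of the isotypic decomposition \eqref{eq:intro_decomp_cohomology_irred_repr} with the classical branching rules from $\Fs\Fo(N)$ to $\Fs\Fo(N-1)$.

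First, since $\CE$ is atomic, the Mukai vector $v(\CE)$ is annihilated by the Lie subalgebra $\Fh := \Ann(v(\CE)) = \Ann(\tv) \subset \Fg(X)$. The isotypic decomposition is by construction $\Fg(X)$-equivariant, hence also $\Fh$-equivariant, so each projection $v(\CE)_\lambda \in V_\lambda$ is itself annihilated by $\Fh$. Consequently, if $v(\CE)_\lambda \neq 0$, then the isotypic piece $V_\lambda$, and therefore the irreducible representation $V_\lambda$ itself, must contain a nonzero $\Fh$-invariant vector.

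Next, because $\tq(\tv) \neq 0$, the argument already used in the proof of Proposition~\ref{prop:atomic_annihilator_codimension} identifies $\Fh_\BC$ with the stabilizer $\Fs\Fo(\tv^\perp) \cong \Fs\Fo(b_2(X)+1)$ of the non-isotropic line $\BC\tv \subset \tH(X,\BC)$. Identifying which $V_\lambda$ admit a nonzero $\Fh$-invariant vector therefore reduces to a standard question about the branching from $\Fs\Fo(N)$ to $\Fs\Fo(N-1)$, where $N = b_2(X)+2$: namely, which irreducibles $V_\lambda$ of $\Fs\Fo(N)$ contain the trivial representation of $\Fs\Fo(N-1)$ upon restriction.

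Third, this is a classical computation, recorded in App.~B.2 of \cite{GKLRLLV}: the branching from $\Fs\Fo(N)$ to $\Fs\Fo(N-1)$ is multiplicity-free and governed by an interlacing condition on highest weights. A direct inspection of these inequalities shows that the trivial representation $V_0$ of $\Fs\Fo(N-1)$ appears in the restriction of an irreducible $V_\lambda$ of $\Fs\Fo(N)$ if and only if the highest weight satisfies $\lambda_i = 0$ for all $i \geq 2$, that is, $\lambda = k\epsilon_1$ so that $V_\lambda = V_{(k)}$. Combining this with the first two steps gives the claim. The only real point to be careful about is invoking the correct branching statement for both even and odd $N$; once that is in place, the Lie-theoretic part is completely formal, and there is no issue in passing between $\BQ$ and $\BC$ since the LLV isotypic decomposition is already defined over $\BQ$.
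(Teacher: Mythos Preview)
Your proof is correct and follows exactly the approach the paper has in mind: the paper's one-line justification (``the branching rules discussed in \cite[App.\ B.2]{GKLRLLV} immediately yield [the result]'') is precisely your argument, namely that each isotypic component of $v(\CE)$ is $\Ann(\tv)$-invariant, that $\Ann(\tv)_\BC \cong \Fs\Fo(b_2(X)+1)$ when $\tq(\tv)\neq 0$, and that the interlacing branching rule forces $\lambda = k\epsilon_1$. Your writeup simply spells out what the paper leaves implicit, and your care in using the isotypic (rather than irreducible) decomposition to ensure equivariance of the projections is exactly right.
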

We expect the conclusion of the proposition to remain true for all atomic complexes. 

The last two propositions imply that for an atomic object with $\tq(\tv) \neq 0$ the number of trivial $\Ann(v(\CE))$ representations of $\h^\ast(X,\BQ)$ is the number of irreducible $\Fg(X)$-representations of the form $V_{(k)}$ for $k\in \BZ_{\geq 0}$. This shows that the Mukai vector $v(\CE)$ of an atomic object is severly restricted. 
\begin{rmk}
	The definition of the extended Mukai vector in \cite{BeckmannExtendedIntegral} was inspired by the commutativity of the diagram
	\begin{equation}
		\begin{tikzcd}
			\Db(S)\ar{r}{\Phi}\ar{d}{v} & \Db(S')\ar{d}{v}\\
			\h^{\ast}(S,\BZ) \ar{r}{\Phi^{\h}} & \h^{\ast}(S',\BZ)
		\end{tikzcd}
	\end{equation}
	for derived equivalences between K3 surfaces. That is, we wanted to study complexes for which this diagram had a higher-dimensional counterpart. For this, restricting to the Verbitsky component was sufficient. 
	
	Inspecting the decomposition \eqref{eq:decomposition_Mukaivector} leads naturally to Definition~\ref{defn:atomic_objects_via_Mukaivector}, i.e.\ of atomic sheaves and complexes. While studying atomic complexes and their properties we came to the conclusion that these are complexes on higher dimensional hyper-Kähler manifolds which behave much like stable respectively simple sheaves on K3 surfaces. In what follows, we want to convey the reader this intuition. 
\end{rmk} 

\subsection{Mukai vector and general properties of atomic objects}
\label{subsec:atomic_Mukai_vector_general_properties}
In this subsection we discuss general properties of atomic objects that follow easily from \cite{BeckmannExtendedIntegral}.

\begin{lem}
\label{lem:atomic_sheaf_in_Verbitsky}
	Let $\CE$ be a sheaf. Then $0 \neq v(\CE)_\SH \in \SH(X,\BQ)$. 
\end{lem}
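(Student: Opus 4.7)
The plan is to split into two cases based on whether $\CE$ has positive generic rank or is torsion, and in each case exhibit a Verbitsky class witnessing the non-vanishing of $v(\CE)_\SH$. The key preliminary observation is that, since the grading operator $h$ lies in $\Fg(X)$, the LLV decomposition of $\h^\ast(X,\BQ)$ respects the cohomological grading; combined with the fact that $\SH(X,\BQ)$ is the $\Fg(X)$-subrepresentation generated by $1 \in \h^0(X,\BQ)$, this forces $\SH(X,\BQ)$ to be the unique isotypical summand meeting $\h^0(X,\BQ)$, and hence to appear with multiplicity one in the LLV decomposition. By $\Fg(X)$-invariance of Poincaré duality, every other isotypical summand is then Poincaré-orthogonal to $\SH(X,\BQ)$.

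If $\rk(\CE) > 0$, the degree-zero piece of $v(\CE) = \ch(\CE)\,\tdd$ equals $\rk(\CE)\cdot 1$, which is non-zero and sits inside $\SH(X,\BQ)$ by the preceding paragraph. Thus $v(\CE)_\SH$ is already non-zero in degree zero, and we are done without any pairing argument.

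If $\CE$ is torsion, set $d \coloneqq \dim \mathrm{supp}(\CE)$, pick a Kähler class $\omega \in \h^{1,1}(X,\BR)$, and pair $v(\CE)$ with $\omega^d \in \SH(X,\BR)$. For degree reasons only the lowest-degree component of $v(\CE)$ contributes, and this component is a positive integral multiple of $[\mathrm{supp}(\CE)] \in \h^{2(2n-d)}(X,\BQ)$; Kähler positivity then yields $\int_X v(\CE) \cup \omega^d > 0$. By the orthogonality established above, this integral coincides with $\int_X v(\CE)_\SH \cup \omega^d$, forcing $v(\CE)_\SH \neq 0$.

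The only non-formal input is the multiplicity-one characterization of the Verbitsky component, which is standard in LLV theory; once that is granted, the two cases are settled by the explicit pairings above, and I do not anticipate further obstacles.
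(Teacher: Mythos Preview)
Your argument is correct and is essentially the paper's: both pair $v(\CE)$ against a K\"ahler power $\omega^{d}$, use positivity of $\int_{Z_i}\omega^d$ on the top-dimensional support components, and then use that $\SH$ is the unique isotype meeting $\h^{4n}$ (the paper phrases this as ``$\SH$ exhausts degrees $0,2,4n-2,4n$'' and concludes directly from $e_\omega\in\Fg(X)$ preserving the isotypical decomposition). One small wrinkle: Poincar\'e duality is not literally $\Fg(X)$-invariant---the Mukai pairing is---but since $\omega^d$ is homogeneous the two pairings differ by a fixed sign, so the orthogonality you need still holds.
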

\begin{proof}
	The Verbitsky component exhausts the subspaces of degree $0,2,4n-2$ and $4n$ of the cohomology $\h^\ast(X,\BQ)$. Therefore, if the Mukai vector does not project trivially to these subspaces, the assertion is proven.
	
	In general, let us consider the decomposition of the support
	\[
	\mathrm{supp}(\CE) = \bigcup_i Z_i
	\]
	of the sheaf $\CE$ into irreducible components. Let $j$ be an index such that $V_j$ has maximal dimension $k$ in the above decomposition. For a Kähler class $\omega \in \h^{1,1}(X)$ we have
	\[
	\int_X [Z_i] \omega^{2n-k} \geq 0, \quad \int_X [Z_j] \omega^{2n-k}>0.
	\]
	In particular, $0 \neq v(\CE)\omega^{2n-k} \in \h^{4n}(X,\BR)$ which proves the assertion. 
\end{proof}
We believe that all simple atomic objects $\CE\in \Db(X)$ satisfy $v(\CE)_\SH \neq 0$. 
\begin{prop}
	\label{prop:atomic_object_scaling_rk_c1_neq_0}
	Let $\CE$ be an atomic object such that $\mathrm{rk}(\CE)\neq 0$ or $\ci_1(\CE)\neq 0$. Then there exists $s\in \BQ$ such that $\tv$ from Definition~\ref{defn:atomic_objects_via_Mukaivector} can be assumed to be
	\[
	\tv=\mathrm{rk}(\CE) \alpha + \ci_1(\CE) + s \beta\in \tH(X,\BQ). 
	\]
\end{prop}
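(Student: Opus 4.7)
The plan is to exploit Proposition~\ref{prop:Atomic_Verbitsky_component_one_trivial_representation}: any representative $\tv \in \tH(X,\BQ)$ with $\Ann(\tv) = \Ann(v(\CE))$ satisfies $v(\CE)_\SH = \lambda T(\tv^n)$ for some $\lambda \in \BQ$, and since $\Ann(\mu\tv) = \Ann(\tv)$ for all $\mu \in \BQ^\times$, the representative is only defined up to rescaling. The whole argument will consist of reading off the two lowest-degree components of this identity and then fixing the scaling of $\tv$ accordingly.

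Writing $\tv = a\alpha + b + c\beta$ with $a,c \in \BQ$ and $b \in \h^2(X,\BQ)$, the first step is to compute the degree-$0$ and degree-$2$ components of $T(\tv^n) \in \SH(X,\BQ) \subset \h^\ast(X,\BQ)$. In $\Sym^n(\tH(X,\BQ))$ the $h$-graded pieces of $\tv^n$ of weights $-2n$ and $-2n+2$ are exactly $a^n \alpha^n$ and $n a^{n-1} \alpha^{n-1} b$. From $\psi(1) = \alpha^n/n!$ and $\Fg(X)$-equivariance one obtains $\psi(x) = \alpha^{n-1} x/(n-1)!$ for $x \in \h^2(X,\BQ)$, and a direct dimension count using \eqref{eq:ses_Verbitskycomponentasg(X)module} shows that $\Sym^{n-2}(\tH(X,\BQ))$ vanishes in the two weights above. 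Hence $T$ is the genuine inverse of $\psi$ there, and the SH-components of $T(\tv^n)$ in cohomological degrees $0$ and $2$ are $n! a^n$ and $n! a^{n-1} b$. Using $c_1(X)=0$ (so $v(\CE)_0 = \rk(\CE)$ and $v(\CE)_2 = \ci_1(\CE)$) together with the fact that every non-Verbitsky irreducible LLV summand in $\h^\ast(X,\BQ)$ vanishes in cohomological degrees $0$ and $2$, matching with $v(\CE)_\SH = \lambda T(\tv^n)$ yields
\[
\rk(\CE) = \lambda n! a^n, \qquad \ci_1(\CE) = \lambda n! a^{n-1} b.
\]

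To conclude, I would first observe that the hypothesis actually forces $\rk(\CE) \neq 0$: if $\rk(\CE) = 0$, then $\lambda a^n = 0$, and either $\lambda = 0$ or $a = 0$; in both cases $\ci_1(\CE)$ is killed by the second displayed identity, the latter using $n \geq 2$. Given $\rk(\CE) \neq 0$, both $\lambda$ and $a$ are nonzero, and rescaling to $\tv' = (\rk(\CE)/a)\,\tv$---which preserves the annihilator---produces a representative whose $\alpha$-coefficient is $\rk(\CE)$, whose $\h^2(X,\BQ)$-coefficient is $(\rk(\CE)/a)\,b = \ci_1(\CE)$ by the displayed relations, and whose $\beta$-coefficient supplies the required $s \in \BQ$. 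The main bookkeeping task is the correct normalization of $T$ and $\psi$ in the two lowest weights; once these are pinned down, everything reduces to the two displayed identities together with the dichotomy $\rk(\CE)=0 \Rightarrow \ci_1(\CE)=0$ for atomic objects.
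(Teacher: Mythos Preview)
Your argument is correct and is exactly the computation the paper defers to \cite[Lem.\ 4.8(v)]{BeckmannExtendedIntegral}: you read off the degree-$0$ and degree-$2$ parts of $v(\CE)_\SH = \lambda T(\tv^n)$ using that $\psi$ is an isomorphism in $h$-weights $-2n$ and $-2n+2$, and then rescale. Your observation that the hypothesis in fact forces $\rk(\CE)\neq 0$ (since $\rk(\CE)=0$ and $n\geq 2$ kill the degree-$2$ identity as well) is a clean self-contained way to handle the disjunction and matches what the paper records after Proposition~\ref{prop:support_atomic_Lagrangian}.
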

\begin{proof}
	The assumptions imply that in particular $v(\CE)_\SH \neq 0$. 
	This is then the same computation as in the proof of \cite[Lem.\ 4.8(v)]{BeckmannExtendedIntegral}.
\end{proof}
Hence, there is a particular element in the line spanned by $\tv$ which gives the following. 
\begin{defn}
	\label{defn:Mukaivector}
	Let $\CE\in \Db(X)$ be an atomic object such that $\mathrm{rk}(\CE)\neq 0$. Then its \textit{Mukai vector} $\tv(\CE) \in \tH(X,\BQ)$ is defined as 
	\[
	\tv(\CE) = \mathrm{rk}(\CE) \alpha + \ci_1(\CE) + s\beta \in \tH(X,\BQ)
	\]
	for the unique $s\in \BQ$ such that $\Ann(v(\CE))=\Ann(\tv(\CE)) \subset \Fg(X)$. 
\end{defn}
If $\CE$ is an atomic sheaf, we know by Lemma~\ref{lem:atomic_sheaf_in_Verbitsky} that $v(\CE)_\SH \neq 0$. From Proposition~\ref{prop:support_atomic_Lagrangian} below, we know that if $\mathrm{rk}(\CE) = 0$, then the support of $\CE$ is a union of Lagrangian subvarieties or points. In the former case, taking $\tv \in \tH(X,\BQ)$ associated to $\CE$ from Definition~\ref{defn:atomic_objects_via_Mukaivector}, its projection $\lambda\in \h^2(X,\BQ)$ to the component in $\h^2(X,\BQ)\subset \tH(X,\BQ)$ is non-zero. Normalize $\lambda$ in such a way that $q(\lambda, \omega) >0$
for a Kähler class $\omega$ and such that $\lambda \in \h^2(X,\BZ)^\vee \subset \h^2(X,\BQ)$ is a primitive element in the dual lattice of $\h^2(X,\BZ)$. We define the corresponding multiple of $\tv$ to be the Mukai vector $\tv(\CE) \in \tH(X,\BQ)$ of $\CE$. 

We note that in the rest of the text, the precise multiple of $\tv$ in Definition~\ref{defn:atomic_objects_via_Mukaivector} will not play a role. 
See \cite[Sec.\ 4]{BeckmannExtendedIntegral} for another discussion of the question which element of the line $\BQ \langle \tv \rangle$ is a candidate for the Mukai vector $\tv(\CE)$ of an atomic sheaf or complex $\CE$ when its rank and determinant are zero. 
\begin{prop}
	\label{prop:atomic_stable_under_defo_derived_equivalence}
	Let $\Phi \colon \Db(X)\cong \Db(Y)$ be a derived equivalence between projective hyper-Kähler manifolds and $\CE\in \Db(X)$. Then $\CE$ is atomic if and only if $\Phi(\CE)$ is. Similarly, for $\CX \to B$ a family of hyper-Kähler and $\CE$ a $B$-perfect complex on $\CX$ we have for two points $b,b'\in B$ that $\CE_b$ is atomic if and only if $\CE_{b'}$ is.
\end{prop}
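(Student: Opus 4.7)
The plan is to reduce both statements to the fact that the pair consisting of the LLV algebra action and the Mukai vector is preserved, up to canonical isomorphism, under each operation. Since atomicity of $\CE$ depends only on the Lie subalgebra $\Ann(v(\CE)) \subset \Fg(X)$ together with the existence of a non-zero $\tv \in \tH(X,\BQ)$ realizing it, such a compatibility immediately yields the proposition.

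For the derived equivalence part, I would invoke the fundamental compatibility result of Taelman: any derived equivalence $\Phi \colon \Db(X) \cong \Db(Y)$ between projective hyper-Kähler manifolds induces an isometric isomorphism $\tilde{\Phi}^{\h} \colon \tH(X,\BQ) \cong \tH(Y,\BQ)$ of extended Mukai lattices, and the associated cohomological Fourier--Mukai transform $\Phi^{\h} \colon \h^{\ast}(X,\BQ) \cong \h^{\ast}(Y,\BQ)$ (obtained by contracting with $v$ of the kernel) sends $v(\CE)$ to $v(\Phi(\CE))$ and intertwines the LLV actions along the induced isomorphism $\Fg(X) \cong \Fs\Fo(\tH(X,\BQ)) \cong \Fs\Fo(\tH(Y,\BQ)) \cong \Fg(Y)$. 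It follows at once that $\Ann(v(\Phi(\CE)))$ is the image of $\Ann(v(\CE))$ under this Lie algebra isomorphism, so $\Ann(v(\CE)) = \Ann(\tv)$ holds if and only if $\Ann(v(\Phi(\CE))) = \Ann(\tilde{\Phi}^{\h}(\tv))$, which is the desired equivalence.

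For the family statement, I would argue by parallel transport along a path in $B$ using the Gauss--Manin connection on $\RR^{\ast}\pi_{\ast}\BQ$. Parallel transport preserves cup product, the integral lattice, and the BBF form on $\h^2$, and therefore identifies the extended Mukai lattices of the fibers. Since $\Fg(\CX_b) \subset \End(\h^{\ast}(\CX_b,\BQ))$ is defined intrinsically from the cohomology ring (the Hard Lefschetz property being Zariski-open on $\h^2$), the LLV algebras and their actions are canonically identified under parallel transport. For a $B$-perfect complex $\CE$ on $\CX$, the Chern character $\ch(\CE_b)$ defines a flat section of $\RR^{\ast}\pi_{\ast}\BQ$, and the Todd class $\td_{\CX_b}$ is flat as well, being a universal polynomial in the Chern classes of the tangent bundle. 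Hence $v(\CE_b)$ is carried to $v(\CE_{b'})$ by parallel transport, their annihilators are identified, and atomicity is preserved.

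The main obstacle is the input for the derived equivalence part: the compatibility of Fourier--Mukai transforms with the LLV algebra structure is non-trivial and relies on \cite{TaelmanDerHKLL} together with the identification of the extended Mukai lattice in \cite{BeckmannExtendedIntegral}. A minor subtlety for the family case is to verify that parallel transport identifies the LLV subalgebras themselves and not some overalgebra, but this is automatic from the intrinsic characterization of $\Fg$ in terms of the cohomology ring. Once these two points are recorded, both halves of the proposition follow formally.
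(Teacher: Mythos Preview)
Your proposal is correct and is precisely the argument the paper has in mind; the paper's own proof is the single sentence ``This is immediate from the definitions,'' which presupposes exactly the two facts you spell out (Taelman's compatibility of derived equivalences with the LLV action and the extended Mukai lattice, and the flatness of $v(\CE_b)$ and the LLV structure under Gauss--Manin parallel transport). You have simply made explicit what the paper leaves implicit.
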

\begin{proof}
	This is immediate from the definitions.
\end{proof}
To finish this section let us mention one more property of atomic sheaves and complexes similar to \cite[Lem.\ 4.13(v)]{BeckmannExtendedIntegral}.
\begin{prop}
\label{prop:support_atomic_Lagrangian}
	Let $\CE$ be an atomic object with $v(\CE)_\SH\neq 0$, e.g.\ $\CE$ is a sheaf, such that $\rk(\CE)=0$ or $\ci_1(\CE)=0$. Then all Chern classes of $\CE$ are isotropic, that is $\ci_i(\CE)\sigma=0$ for all $i$ and $\sigma$ a symplectic form. 
\end{prop}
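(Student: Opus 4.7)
The plan is to translate atomicity into a Lie-algebra annihilation on $\tH(X,\BC)$ applied to the cup-product operator $e_\sigma\in\Fg(X)_\BC$. First I would determine the shape of the vector $\tv\in\tH(X,\BQ)$ with $\Ann(\tv)=\Ann(v(\CE))$. Because $v(\CE)$ is of pure Hodge type we have $h'\in\Ann(v(\CE))$, hence $h'(\tv)=0$ and $\tv\in\BQ\alpha\oplus(\h^{1,1}(X,\BC)\cap\h^2(X,\BQ))\oplus\BQ\beta$. Proposition~\ref{prop:atomic_object_scaling_rk_c1_neq_0} gives $\tv=\rk(\CE)\alpha+\ci_1(\CE)+s\beta$ whenever $\rk(\CE)\neq 0$ or $\ci_1(\CE)\neq 0$. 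In the remaining sub-case where both vanish, the hypothesis $v(\CE)_\SH\neq 0$ combined with Proposition~\ref{prop:Atomic_Verbitsky_component_one_trivial_representation}, which forces $v(\CE)_\SH\in\BQ\langle T(\tv^n)\rangle$, lets us compare $\SH^0$-projections: the $\SH^0$-part of $T(\tv^n)$ is controlled by the $\alpha^n$-coefficient of $\tv^n$, so the vanishing of $v(\CE)|_{\h^0}$ forces the $\alpha$-coefficient of $\tv$ to be zero.

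In the case $\rk(\CE)=0$, write $\tv=\lambda+s\beta$ with $\lambda\in\h^{1,1}(X,\BC)\cap\h^2(X,\BQ)$. Using the explicit $\Fs\Fo(\tH)$-action $e_\sigma(\alpha)=\sigma$, $e_\sigma(\mu)=q(\sigma,\mu)\beta$, $e_\sigma(\beta)=0$ together with the Hodge orthogonality $q(\sigma,\lambda)=0$ (as $\sigma\in\tH^{2,0}$ and $\lambda\in\tH^{1,1}$), I obtain $e_\sigma(\tv)=0$. Atomicity then yields $e_\sigma\in\Ann(\tv)=\Ann(v(\CE))$, so $v(\CE)\cdot\sigma=e_\sigma(v(\CE))=0$ in $\h^\ast(X,\BC)$. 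Writing $v(\CE)=\ch(\CE)\tdd$ and using that $\tdd=1+\cdots$ is a unit for cup product in even cohomology, we obtain $\ch(\CE)\cdot\sigma=0$, whence $\ch_k(\CE)\cdot\sigma=0$ for every $k\geq 1$; the standard polynomial expressions of $\ci_i$ in terms of $\ch_1,\dots,\ch_i$ then yield $\ci_i(\CE)\cdot\sigma=0$ for all $i\geq 1$.

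In the remaining case $\ci_1(\CE)=0$, $\rk(\CE)\neq 0$, the vector is $\tv=r\alpha+s\beta$ with $r\neq 0$ and $e_\sigma(\tv)=r\sigma\neq 0$, so the direct argument fails. Here I would exhibit instead an element $e_\sigma-Y\in\Ann(\tv)$ as the limit of the combinations $e_{\omega_t}-(rq(\omega_t)/(2s))\Lambda_{\omega_t}$ along a path $\omega_t\coloneqq\sigma+t\tau$ with $\tau\in\h^{1,1}$ satisfying $q(\sigma,\tau)\neq 0$; each such combination annihilates $\tv$ whenever $q(\omega_t)\neq 0$, and the limit as $t\to 0$ is well-defined in $\Fg(X)_\BC$. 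The identity $v(\CE)\cdot\sigma=Y(v(\CE))$ then expresses the left-hand side as a known Lie-algebra output, and comparing with the same identity applied to $\CO_X^{\oplus r}$ (for which $\ci_i=0$ for $i\geq 1$) forces $Y(v(\CE))=\rk(\CE)\tdd\cdot\sigma$; subtracting yields $(v(\CE)-\rk\tdd)\cdot\sigma=0$, and invertibility of $\tdd$ again gives $\ci_i(\CE)\cdot\sigma=0$ for $i\geq 1$. The main obstacle is the construction and identification of $Y$: the naive formula for $\Lambda_\sigma$ is singular because $\sigma$ is BBF-isotropic, so one must either use the limiting procedure above or invoke Theorem~\ref{thm:Taelman_Verbitsky_Equality_Lie_Algebras} to recognize $Y$ as the $\Fg(X)_\BC$-action coming from the Poisson bivector $\sigma^{-1}\in\h^0(\Lambda^2\CT_X)\subset\HT^2(X)$.
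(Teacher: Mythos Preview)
Your argument for the case $\rk(\CE)=0$ is correct and is essentially the paper's entire proof: the paper asserts that under the hypothesis $\tv$ projects trivially onto $\BQ\alpha$, whence $e_\sigma(\tv)=0$, so $e_\sigma\in\Ann(\tv)=\Ann(v(\CE))$ and $v(\CE)\cdot\sigma=0$, and then refers to \cite[Sec.\ 4.4]{BeckmannExtendedIntegral}. Your treatment of the sub-case $\rk(\CE)=\ci_1(\CE)=0$ via the $\SH^0$-projection of $T(\tv^n)$ is a detail the paper does not spell out.

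Your second case ($\rk(\CE)\neq 0$, $\ci_1(\CE)=0$) has a genuine gap. The operator $Y=\lim_{t\to 0}\frac{rq(\omega_t)}{2s}\Lambda_{\omega_t}$ depends on the $\beta$-coefficient $s$ of $\tv(\CE)$, whereas $\tv(\CO_X^{\oplus r})=r(\alpha+r_X\beta)$ generally has a different $\beta$-coefficient. Hence $e_\sigma-Y\notin\Ann(v(\CO_X^{\oplus r}))$ in general, the ``same identity'' $v(\CO_X^{\oplus r})\cdot\sigma=Y(v(\CO_X^{\oplus r}))$ fails, and the deduction $Y(v(\CE))=r\,\tdd\cdot\sigma$ is unjustified. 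Your alternative suggestion of identifying $Y$ with contraction by the Poisson bivector $\sigma^{-1}$ runs into the same issue: that operator is canonical, while your $Y$ carries the factor $r/s$ and hence depends on $\CE$. The limiting construction also breaks down entirely when $s=0$. (The paper's short proof does not treat this case separately either; it simply relies on the $\alpha$-coefficient of $\tv$ vanishing, which as you correctly observe fails here.)
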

\begin{proof}
	This follows already from the definition of atomicity, see also \cite[Sec.\ 4.4]{BeckmannExtendedIntegral}. The vector $\tv$ as in Definition~\ref{defn:atomic_objects_via_Mukaivector} projects by assumption trivially onto the subspace spanned by $\alpha \in \tH(X,\BQ)$. But for all such elements we have $e_{\sigma}(\tv)=0$. This means that $e_\sigma \in \Ann(v(\CE))$ from which the assertion immediately follows.
\end{proof}
We recall here that for $\CE$ as in the proposition $\ch_0(\CE)=0$ or $\ch_1(\CE)=0$ already implies that $\ch_i(\CE)=0$ for $i<n$, see \cite[Lem.\ 4.8(v)]{BeckmannExtendedIntegral}. If, moreover, $\ch_n(\CE)=0$, then we have that $\ch_i(\CE)=0$ for $i<2n$. 
\section{Obstruction Maps}
\label{sec:obstruction_maps}
In this section we will discuss the implications between the various obstruction maps from the introduction and atomicity. In particular, we will prove Theorem~\ref{thm:atomic_equivalent_coh_obstrucion} and Theorem~\ref{thm:1obstructed_implies_atomic}. 
\subsection{Cohomological Obstruction map and Atomicity}
\label{subsec:atomic_comparison} 
We show here that being atomic is equivalent to having a cohomological obstruction map with kernel of codimension one. 
\begin{proof}[Proof of Theorem~\ref{thm:atomic_equivalent_coh_obstrucion}]
	Let us assume first that $\CE$ is atomic. We know that
	\[
	\Ann(v(\CE))= \Ann(\tv)\subset \Fg(X)
	\]
	for some $\tv \in \tH(X,\BQ)$. Since $v(\CE)$ is algebraic and, therefore, $h'(v(\CE))=0$ we conclude $h'\in \Ann(\tv)$. Thus, we find that $h'(\tv)=0$ which implies $\tv \in \tH^{1,1}(X,\BQ)$. 
	
	An element $\mu \in \HT^2(X)$ induces the operator $e_\mu \in \Fg(X)_\BC$ which has degree two for the grading operator $h'$. Moreover, we have the perfect pairing
	\[
	\HT^2(X) \times  \left( \BC \alpha \oplus \h^{1,1}(X,\BC) \oplus \BC \beta \right) \to \h^{0,2}(X), \quad (\mu,x) \mapsto e_\mu(x) = \mu \lrcorner x
	\]
	obtained from viewing $\tH(X,\BC)$ as a $\Fg(X)_\BC$-module. In particular, restricting the perfect pairing to $\tv \in \tH(X,\BC)$ we see that under the embedding
	\[
	\HT^2(X)\hookrightarrow \Fg(X)_\BC, \quad \mu \mapsto e_\mu
	\]
	the intersection $\Ann(\tv)_\BC \cap \HT^2(X) \subset \Fg(X)_\BC$ is $b_2(X)-1$-dimensional. Since $\Ann(v(\CE))_\BC\cap \HT^2(X)$ equals the kernel $\Ker(\obs_\CE)$ of the cohomological obstruction map, the equality $\Ann(\tv) = \Ann(v(\CE))$ shows that $\obs_\CE$ has a one-dimensional image. 
	
	For the converse implication let us reinspect the proof of Proposition~\ref{prop:atomic_annihilator_codimension}. There, we studied the codimension of $\Ann(v(\CE)) \subset \Fg(X)$ in terms of the kernel of the cohomological obstruction map. In particular, in the case of interest of us, that is, the kernel having codimension one, we already deduced that $\CE$ must be atomic, which finishes the proof. 
\end{proof}
\begin{rmk}
	\label{rmk:Theorem_1.2_completely_cohomological}
	The statement and the proof of the above theorem are purely cohomological. That is, we actually proved the following for an element $x\in \h^\ast(X,\BQ)$ of Hodge type, i.e.\ $h'(x)=0$:
	
	The annihilator Lie subalgebra $\Ann(x) \subset \Fg(X)$ is equal to $\Ann(\tv) \subset \Fg(X)$ for a non-zero element $\tv \in \tH(X,\BQ)$ if and only if the morphism
	\[
	\HT^2(X) \to \h^\ast(X,\BC), \quad \mu \mapsto \mu \lrcorner x
	\]
	has a one-dimensional image. 
\end{rmk}
	In \cite[Prop.\ 2.6]{HuybrechtsNieperWisskirchen} the authors have shown that for $\mu\in \h^1(X,\CT_X) \oplus \h^2(X,\CO_X)$ the vanishing
	\[
	\mu \lrcorner v(\CE)=0
	\]
	is equivalent to the vanishing
	\[
	\mu \lrcorner \ch(\CE)=0.
	\]
	However, this does not remain true for the total space $\HT^2(X)$, i.e.\ the cohomological obstruction map having a one-dimensional image is not equivalent to the map
	\[
	\HT^2(X)\to \HO_{2}(X), \quad \mu \mapsto \mu \lrcorner \ch(\CE)
	\]
	having a one-dimensional image. An example for this phenomenon is any complex $\CE\in \Db(S^{[2]})$ in the derived category of the second Hilbert scheme $S^{[2]}$ for $S$ a K3 surface such that $\ch(\CE) \in \BQ \langle  v(\CO_{S^{[2]}}) \rangle $. 

\subsection{Obstruction Map and Atomicity}
\label{subsec:comparison_atomic_obstruction_map}
Let us recall the observation \cite[Lem.\ 3.2]{HuangQuestion} which relates the obstruction and the cohomological obstruction map for $\CE$. 
\begin{lem}
	\label{lem:obstructed_implies_cohomologically_obstructed}
	Let $\CE\in \Db(X)$ be an object and $\gamma \in \HH^2(X)$. Then $0 = \chi_\CE(\gamma) = \gamma_\CE \in \Ext^2(\CE,\CE)$ implies $0 = \gamma \circ \chHH(\CE) \in \HH_2(X)$. In particular, 
	\[
	\IK(\Ker(\chi_\CE)) \subset \Ker(\obs_\CE).
	\]
\end{lem}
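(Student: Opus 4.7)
The plan is to use the defining identity of the Hochschild Chern character,
\[
\Tr_{X \times X}(\mu \circ \chHH(\CE)) = \Tr_X(\mu_\CE) \quad \text{for all } \mu \in \HH^{\ast}(X),
\]
in combination with the fact that $\chi_\CE \colon \HH^\ast(X) \to \Ext^\ast(\CE,\CE)$ respects the ring structures (so that $(\eta \circ \gamma)_\CE = \eta_\CE \circ \gamma_\CE$) and with the non-degeneracy of the Serre duality pairing on $\HH_\ast(X)$. Concretely, I will first establish the implication $\gamma_\CE = 0 \Rightarrow \gamma \circ \chHH(\CE) = 0$, and then translate it to the statement about kernels via the HKR isomorphisms $\IK$ and $\Ik$.

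For the first step, assume $\chi_\CE(\gamma) = 0$ and let $\eta \in \HH^{2n-2}(X)$ be arbitrary. Then $(\eta \circ \gamma)_\CE = \eta_\CE \circ \gamma_\CE = 0$ in $\Ext^{2n}(\CE,\CE)$, so $\Tr_X((\eta \circ \gamma)_\CE) = 0$. Applying the defining identity of $\chHH(\CE)$ to $\mu = \eta \circ \gamma \in \HH^{2n}(X)$ gives
\[
0 \;=\; \Tr_{X \times X}\bigl((\eta \circ \gamma) \circ \chHH(\CE)\bigr) \;=\; \Tr_{X \times X}\bigl(\eta \circ (\gamma \circ \chHH(\CE))\bigr).
\]
This holds for every $\eta \in \HH^{2n-2}(X)$. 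The natural pairing
\[
\HH^{2n-2}(X) \otimes \HH_{2}(X) \longrightarrow \BC, \quad (\eta, v) \longmapsto \Tr_{X \times X}(\eta \circ v),
\]
is non-degenerate; this is a version of Serre duality on $X \times X$ applied to $\Delta_\ast \CO_X$ (using $\omega_X \cong \CO_X$), and can be imported directly from the references on Hochschild structures. Non-degeneracy then forces $\gamma \circ \chHH(\CE) = 0$ in $\HH_2(X)$, which is the first claim.

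For the second step, apply $\Ik$ and use that $\Ik \colon \HH_\ast(X) \cong \HO_\ast(X)$ is an isomorphism of modules over $\HH^\ast(X) \cong \HT^\ast(X)$ via $\IK$, together with the identification $\Ik(\chHH(\CE)) = v(\CE)$ recalled in Section~\ref{subsec:prelim_hh}. Hence
\[
\Ik\bigl(\gamma \circ \chHH(\CE)\bigr) \;=\; \IK(\gamma) \lrcorner v(\CE) \;=\; \obs_\CE(\IK(\gamma)).
\]
Combining with the first step, $\chi_\CE(\gamma) = 0$ implies $\obs_\CE(\IK(\gamma)) = 0$, giving the inclusion $\IK(\Ker(\chi_\CE)) \subset \Ker(\obs_\CE)$.

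The only genuine subtlety is the non-degeneracy of the $\Tr_{X \times X}$-pairing between $\HH^{2n-2}(X)$ and $\HH_2(X)$; once this is granted, everything else is a formal manipulation with the defining property of $\chHH(\CE)$ and the multiplicativity of the (twisted) HKR isomorphisms. Since the non-degeneracy is part of the standard Serre duality framework for the diagonal on $X \times X$, I would simply cite the corresponding statement from \cite{CaldararuMukaiI, CaldararuMukaiII} rather than reprove it.
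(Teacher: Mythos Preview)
Your proposal is correct and follows exactly the approach the paper indicates: the paper does not spell out a proof but cites \cite[Lem.\ 3.2]{HuangQuestion} and remarks that it is ``an application of the defining property of the Hochschild Chern character and the non-degeneracy of the Serre duality trace,'' which is precisely what you carry out in detail. Your translation to the cohomological obstruction map via the module compatibility of $\Ik$ over $\IK$ and the identity $\Ik(\chHH(\CE)) = v(\CE)$ is also the intended step.
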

The proof is an application of the defining property of the Hochschild Chern character and the non-degeneracy of the Serre duality trace. We can use this and the relation between the cohomological obstruction map and atomicity to give a proof of Theorem~\ref{thm:1obstructed_implies_atomic}. 

Recall that Theorem~\ref{thm:1obstructed_implies_atomic} asserts a relationship between obstructions to first-order (non-commutative) deformations of $\CE$ and atomicity of $\CE$ when the object $\CE$ is 1-obstructed. Employing Theorem~\ref{thm:atomic_equivalent_coh_obstrucion} this is equivalent to establishing a relationship between obstructions to first-order (non-commutative) deformations of $\CE$ and obstructions to the Mukay vector $v(\CE)$ of $\CE$ staying of Hodge type. 
\begin{proof}[Proof of Theorem~\ref{thm:1obstructed_implies_atomic}]
	As recalled above we need to relate (the dimensions of) $\Ker(\chi_\CE)$ and $\Ker(\obs_\CE)$ for $\CE$ 1-obstructed. 
	This is done using Theorem~\ref{thm:atomic_equivalent_coh_obstrucion} and Lemma~\ref{lem:obstructed_implies_cohomologically_obstructed}. 
	
	More precisely, Lemma~\ref{lem:obstructed_implies_cohomologically_obstructed} gives
	\[
	\IK(\Ker(\chi_\CE)) \subset \Ker(\obs_\CE).
	\]
	which implies that the cohomological obstruction map $\obs_\CE$ must have one or zero-dimensional image. If it is one-dimensional, Theorem~\ref{thm:atomic_equivalent_coh_obstrucion} gives that $\CE$ is atomic. 
	
	To conclude, it is left to show that the image of $\obs_\CE$ is not zero-dimensional. This follows from the lemma below. 
\end{proof}
\begin{lem}
	The radical $W\subset \HO_0(X)$ of the pairing
	\[
	\HT^2(X) \times \HO_0(X) \to \HO_2(X)
	\]
	corresponds under the isomorphism $\h^\ast(X,\Omega_X^\ast) \cong \h^\ast(X,\BC)$ to the subspace spanned by trivial representations of the LLV algebra.
\end{lem}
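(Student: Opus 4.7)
My plan is to reduce the claim to the description of $\Fg(X)_\BC$ as being generated by Hard Lefschetz $\Fs\Fl_2$-triples attached to $\HT^2(X)$, which is precisely Theorem~\ref{thm:Taelman_Verbitsky_Equality_Lie_Algebras}. Under the isomorphism $\h^\ast(X,\Omega_X^\ast)\cong \h^\ast(X,\BC)$, the subspace $\HO_0(X)$ is identified with $\bigoplus_p \h^{p,p}(X)$, i.e.\ the kernel of $h'$, and the pairing becomes $(\mu,x)\mapsto e_\mu(x) = \mu\lrcorner x$. By Theorem~\ref{thm:Taelman_Verbitsky_Equality_Lie_Algebras} the assignment $\mu\mapsto e_\mu$ embeds $\HT^2(X)$ as the degree-two piece of $\Fg(X)_\BC$ for the grading by $h'$. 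Thus
\[
W \;=\; \{x\in \HO_0(X) \,:\, e_\mu(x) = 0 \text{ for all } \mu\in\HT^2(X)\},
\]
and the problem becomes showing that this coincides with the sum of trivial $\Fg(X)$-subrepresentations of $\h^\ast(X,\BC)$.

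One inclusion is immediate: every $\Fg(X)$-trivial vector is annihilated by $h'\in\Fg(X)$, so automatically lies in $\HO_0(X)$, and is of course annihilated by each $e_\mu$. For the converse, take $x\in W$, so that $h'(x)=0$ and $e_\mu(x)=0$ for every $\mu\in\HT^2(X)$. I would recycle the formal $\Fs\Fl_2$-argument already used at equation~\eqref{eq:28714} in the proof of Proposition~\ref{prop:atomic_annihilator_codimension}: for any Hard Lefschetz element $\mu\in\HT^2(X)$ with dual operator $\Lambda_\mu$, the identity $[e_\mu,\Lambda_\mu]=h'$ gives
\[
e_\mu(\Lambda_\mu(x)) \;=\; h'(x) + \Lambda_\mu(e_\mu(x)) \;=\; 0,
\]
and the injectivity of $e_\mu$ on $\HO_{-2}(X)$ (Hard Lefschetz) forces $\Lambda_\mu(x)=0$. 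Hence $x$ is annihilated by every generator $e_\mu$, $h'$, $\Lambda_\mu$ of the Hard Lefschetz $\Fs\Fl_2$-triples attached to elements of $\HT^2(X)$, and by Theorem~\ref{thm:Taelman_Verbitsky_Equality_Lie_Algebras} these triples generate all of $\Fg(X)_\BC$. Consequently $\Fg(X)$ acts trivially on $x$, and $x$ spans a trivial subrepresentation.

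The proof is essentially formal once Theorem~\ref{thm:Taelman_Verbitsky_Equality_Lie_Algebras} is available; I do not anticipate a substantial obstacle. The only delicate point is the passage from annihilation by the degree-two operators of $\Fg(X)_\BC$ to annihilation by all of $\Fg(X)_\BC$, but this is precisely what the Hard Lefschetz trick combined with Taelman--Verbitsky supplies, exactly as in the proof of Proposition~\ref{prop:atomic_annihilator_codimension}.
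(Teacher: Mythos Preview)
Your proposal is correct and follows essentially the same argument as the paper: both directions are handled identically, with the nontrivial inclusion obtained via the Hard Lefschetz $\Fs\Fl_2$-trick of equation~\eqref{eq:28714} to pass from $e_\mu(x)=0$ to $\Lambda_\mu(x)=0$, and then invoking Theorem~\ref{thm:Taelman_Verbitsky_Equality_Lie_Algebras} to conclude that $x$ is annihilated by all of $\Fg(X)_\BC$. Your write-up is in fact slightly more explicit than the paper's in unpacking the $\Fs\Fl_2$-computation and the injectivity step, but the logic is the same.
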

\begin{proof}
	Since by Theorem~\ref{thm:Taelman_Verbitsky_Equality_Lie_Algebras} the operator $e_\mu$ for $\mu \in \HT^2(X)$ is contained in $\Fg(X)_\BC$ it is immediate that elements in the subspace spanned by trivial representations lie in $W$. 
	
	For the converse inclusion, note that $\HO_0(X)$ is by definition the subspace of elements $x$ satisfying $h'(x)=0$. If $x$ is contained in the radical $W$, we infer from \eqref{eq:28714} that for all elements $\mu \in \HT^2(X)$ having the Hard Lefschetz property the operators $\Lambda_\mu$ also satisfy $\Lambda_\mu(x)=0$. As the set of all these operators generate $\Fg(X)_\BC$, we conclude that $x$ is annihilated by the LLV algebra. 
\end{proof}
We now discuss the converse implication of whether atomic sheaves and complexes are 1-obstructed. The following shows that it does not always hold.
\begin{example}
\label{ex:K3_Atomic_not_1obstructed}
	Consider a K3 surface $X$ and a non-trivial line bundle $\CL\in \Pic(X)$. The bundle $\CE=\CO_X \oplus \CL$ is atomic, but not 1-obstructed.
	
	Indeed, any non-zero sheaf on a K3 surface is atomic. The Atiyah class $\At_\CE$ decomposes
	\[
	\At_\CE = \At_{\CO_X} + \At_\CL \in \Ext^1(\CO_X, \CO_X\otimes \Omega_X^1) \oplus \Ext^1(\CL,\CL \otimes \Omega_X^1) \subset \Ext^1(\CE,\CE \otimes \Omega_X^1)
	\]
	which can be simplified using $\At_{\CO_X}=0$. Since $\CL$ is non-trivial, there exists $\mu \in \h^1(X,\CT_X)$ such that $\mu \lrcorner \ci_1(\CL)\neq 0$. In particular, the element
	\[
	x\coloneqq \mu \lrcorner \At_\CE \in \Ext^2(\CE,\CE)
	\]
	projects non-trivially to the subspace $\Ext^2(\CL,\CL) \subset \Ext^2(\CE,\CE)$, but trivially to the subspace $\Ext^2(\CO_X,\CO_X) \subset \Ext^2(\CE,\CE)$. Moreover, any non-trivial $\mu'\in \h^2(X,\CO_X)$ induces a non-trivial element
	\[
	y \coloneqq \mu' \lrcorner \At_\CE^0 \in \Ext^2(\CE,\CE)
	\]
	which projects non-trivially to $\Ext^2(\CO_X,\CO_X) \subset \Ext^2(\CE,\CE)$ (more precisely, after identifying $\Ext^2(\CO_X,\CO_X)\cong \h^2(X, \CO_X)$ we have that the projection of $y$ equals $2\mu'$). This shows that $x$ and $y$ must be linearly independent. 
\end{example}
Note, however, that every simple sheaf or complex on a K3 surface with non-zero Mukai vector is 1-obstructed. A natural question therefore is whether this also holds true in higher dimensions. 

We state here the following. 
\begin{conjecture}
\label{conj:serre_duality_image_obstruction_map}
	Let $X$ be a hyper-Kähler manifold and $\CE$ a simple atomic object. For each $\gamma \in \HH^2(X)$ with $0 \neq \chi_\CE(\gamma)=\gamma_\CE \in \Ext^2(\CE,\CE)$ there exists $\mu\in \HH^{2n-2}(X)$ such that the composition $0\neq \mu_\CE \circ \gamma_\CE \in \Ext^{2n}(\CE,\CE)$. 
\end{conjecture}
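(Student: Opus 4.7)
The plan is to translate the conjecture into a cohomological non-degeneracy statement via the HKR isomorphism and trace identities, and then establish this statement for simple atomic $\CE$ using a Hard Lefschetz-type $\Fs\Fl_2$-action on $\Ext^\ast(\CE, \CE)$.

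Since $\chi_\CE$ is a ring homomorphism, $\mu_\CE \circ \gamma_\CE = \chi_\CE(\mu \cdot \gamma)$, and the defining property of the Hochschild Chern character yields
\[
\Tr_X(\mu_\CE \circ \gamma_\CE) = \Tr_{X \times X}\bigl((\mu \cdot \gamma) \circ \chHH(\CE)\bigr).
\]
Simplicity of $\CE$ makes $\Ext^{2n}(\CE, \CE) \cong \BC$ via the trace, so $\mu_\CE \circ \gamma_\CE \neq 0$ if and only if the right-hand side is nonzero. Under the graded ring isomorphism \eqref{eq:iso_rings_HH_HT_HO}, composition in $\HH^\ast(X)$ corresponds to cup product in $\h^\ast(X, \BC)$ and $\chHH(\CE)$ corresponds to $v(\CE)$; writing $\mu', \gamma'$ for the cohomological avatars, the right-hand side becomes the Serre trace of $\mu' \cup \gamma' \cup v(\CE)$ on $X$. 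By Poincaré duality, such a $\mu'$ exists if and only if $\gamma' \cup v(\CE) \neq 0$ in $\h^\ast(X, \BC)$. The conjecture thus becomes the converse of Lemma~\ref{lem:obstructed_implies_cohomologically_obstructed}; using Theorem~\ref{thm:atomic_equivalent_coh_obstrucion} this is equivalent to showing that $\chi_\CE^2$ has image of dimension at most one, i.e., that $\CE$ is $1$-obstructed.

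To establish this, I would construct, for a Hard Lefschetz class $\omega \in \HT^2(X)$, an $\Fs\Fl_2$-action on $\Ext^\ast(\CE, \CE)$ whose raising operator is Yoneda composition with $\omega_\CE$ and which is intertwined through $\chi_\CE$ with the LLV triple $(e_\omega, h', \Lambda_\omega) \subset \Fg(X)_\BC$ acting on $\h^\ast(X, \BC)$. Such an intertwining should follow from the Atiyah-class formula in diagram~\eqref{diag:obstruction_and_HKR} combined with functoriality. Hard Lefschetz for this representation produces an isomorphism $\omega_\CE^n \colon \Ext^0(\CE, \CE) \xrightarrow{\sim} \Ext^{2n}(\CE, \CE)$, verifying the conjecture directly for $\gamma = \omega$ via $\mu = \omega^{n-1}$. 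For a general $\gamma \in \HH^2(X)$ with $\gamma_\CE \neq 0$, the one-dimensionality of the image of $\obs_\CE$ for atomic $\CE$ forces $\gamma' \cup v(\CE)$ to be a nonzero scalar multiple of $\omega' \cup v(\CE)$; transferring this proportionality through the $\Fs\Fl_2$-compatibility should force $\gamma_\CE$ to be a scalar multiple of $\omega_\CE$ in $\Ext^2(\CE, \CE)$, reducing to the Hard Lefschetz case.

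The main obstacle is exactly this transfer step. Lemma~\ref{lem:obstructed_implies_cohomologically_obstructed} supplies only the inclusion $\ker \chi_\CE \subset \ker \obs_\CE$, whereas the conjecture demands the reverse inclusion in degree two. The $\Fs\Fl_2$-compatibility through the Atiyah class must be strong enough that two elements of $\HH^2(X)$ whose cohomological actions on $v(\CE)$ coincide up to scalar map to proportional elements of $\Ext^2(\CE, \CE)$. Constructing the $\Fs\Fl_2$-action on $\Ext^\ast(\CE, \CE)$ (as hinted at in Section~8 of the paper) and verifying this refined compatibility is the essential work of the proof.
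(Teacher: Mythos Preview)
The statement you are attempting to prove is \emph{Conjecture~A} in the paper; it is explicitly left open and the paper contains no proof of it. Your reduction in the first two paragraphs is correct and is precisely the content of Corollary~\ref{cor:atomic_simple:1-obstructed_iff_Conjecture}: for a simple atomic object, the conjecture holds if and only if the object is $1$-obstructed. So what you have really written is a proof proposal for the open implication ``simple atomic $\Rightarrow$ $1$-obstructed''.

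The gap is in the construction of the $\Fs\Fl_2$-action on $\Ext^\ast(\CE,\CE)$. The paper establishes Hard Lefschetz for the image of the obstruction map only in two special situations: slope stable (projectively) hyperholomorphic bundles, via Verbitsky's analytic theorem \cite[Thm.\ 4.2A]{VerbitskyHyperholomorphicoverHK}, and atomic Lagrangians, via the multiplicative identification $\Ext^\ast(\iota_\ast\CO_L,\iota_\ast\CO_L)\cong\h^\ast(L,\BC)$. There is no general construction of a lowering operator $\Lambda$ on $\Ext^\ast(\CE,\CE)$ intertwined with $\Lambda_\omega\in\Fg(X)_\BC$ through $\chi_\CE$; the diagram~\eqref{diag:obstruction_and_HKR} only gives compatibility with the \emph{raising} operators $e_\mu$, and there is no evident categorical source for the dual operator. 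Indeed, the paper's own discussion in Section~\ref{sec:further_properties_and_examples} goes in the opposite direction: it observes that \emph{assuming} Conjecture~\ref{conj:serre_duality_image_obstruction_map}, one expects the generator of $\mathrm{Im}(\chi_\CE)$ in degree two to have the Hard Lefschetz property. Your proposed route therefore risks circularity.

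Your ``transfer step'' is exactly the missing reverse inclusion $\Ker(\obs_\CE)\subset \IK(\Ker(\chi_\CE))$, and you correctly flag it as the main obstacle. But note that this inclusion is \emph{equivalent} to the conjecture by the argument in the proof of Theorem~\ref{thm:atomic_implies_1obstructed_via_conjecture}, so invoking an $\Fs\Fl_2$-compatibility to obtain it is not a reduction unless that compatibility is established independently --- which, at present, it is not.
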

Since $\CE$ is assumed to be simple, this is equivalent to asking $\Tr_X(\mu_\CE \circ \gamma_\CE) \neq 0$. 
One could formulate an even stronger conjecture by asking that for each $\gamma \in \HH^k(X)$ with $0 \neq \chi_\CE(\gamma) = \gamma_\CE \in \Ext^k(\CE,\CE)$ there exists $\mu\in \HH^{2n-k}(X)$ such that $\Tr_X(\mu_\CE \circ \gamma_\CE)\neq 0$. Using that $X$ is Calabi--Yau and, therefore, $\Ext^\ast(\CE,\CE)$ is via Serre duality equipped with a non-degenerate pairing, this could be rephrased by saying that the this non-degenerate pairing on $\Ext^\ast(\CE,\CE)$ restricts to a non-degenerate pairing on the image subalgebra $\mathrm{Im}(\chi_\CE) \subset \Ext^\ast(\CE,\CE)$. 

The following concerns the reverse implication in Theorem~\ref{thm:1obstructed_implies_atomic} assuming Conjecture~\ref{conj:serre_duality_image_obstruction_map} and establishes a complete relationship between the notion of 1-obstructedness and atomicity. 
\begin{proof}[Proof of Theorem~\ref{thm:atomic_implies_1obstructed_via_conjecture}]
	Recall the defining property of the Hochschild Chern character $\chHH(\CE)\in \HH_0(X)$
	\[
	\Tr_{X \times X}(\delta \circ \chHH(\CE)) = \Tr_X(\delta_{\CE})
	\]
	for all $\delta \in \HH^\ast(X)$. 	
	For $\mu \in \HT^2(X)$ we have that 
	\[
	\mu \lrcorner v(\CE)=0
	\]
	is equivalent to 
	\[
	(\IK)^{-1}(\mu) \circ (\Ik)^{-1}(v(\CE))=(\IK)^{-1}(\mu) \circ\chHH(\CE)=0.
	\]
	If we denote $\gamma \coloneqq (\IK)^{-1}(\mu)  \in \HH^2(X)$, then the above vanishing implies for arbitrary $\gamma'\in\HH^{2n-2}(X)$
	\[
	0=\Tr_{X \times X}(\gamma'\circ \gamma \circ \chHH(\CE))=\Tr_X((\gamma' \circ \gamma)_\CE)=\Tr_X(\gamma'_\CE \circ \gamma_\CE).
	\]
	Conjecture~\ref{conj:serre_duality_image_obstruction_map} now gives that we can deduce from this the vanishing $\gamma_\CE=0$. This gives 
	\[
	\Ker(\obs_\CE) \subset \IK (\Ker(\chi_\CE)).
	\]
	Combined with Lemma~\ref{lem:obstructed_implies_cohomologically_obstructed} we therefore obtain the equality
	\begin{equation}
	\label{eq:proof_thm_1_4}
	\IK(\Ker(\chi_\CE)) = \Ker(\obs_\CE)
	\end{equation}
	which, together with Theorem~\ref{thm:atomic_equivalent_coh_obstrucion} yields the assertion. 
\end{proof}
Note that the above also strengthens Theorem~\ref{thm:1obstructed_implies_atomic}. Namely, assuming that an object $\CE$ satisfies Conjecture~\ref{conj:serre_duality_image_obstruction_map}, one concludes that $\CE$ is atomic without the condition on its Mukai vector not lying in the subspace generated by trivial representations of the LLV algebra. That is, Conjecture~\ref{conj:serre_duality_image_obstruction_map} implies that Mukai vectors of 1-obstructed objects cannot cannot be annihilated by the LLV algebra as the equality \eqref{eq:proof_thm_1_4} forces a non-trivial radical. 
\begin{rmk}
\label{rmk:explanation_name_ostruction_maps}
	The obstruction map
	\[
	\chi_\CE \colon \HH^2(X) \to \Ext^2(\CE,\CE)
	\]
	measures the obstruction to deform $\CE$ to first order along the first order deformation corresponding to the element in $\HH^2(X)$. 
	
	On the other hand, the cohomological obstruction map
	\[
	\obs_\CE \colon \HT^2(X) \to \HO_2(X)
	\]
	concerns only the Mukai vector of the corresponding object and measures whether the Mukai vector stays of Hodge-type along the given first order deformation. 
	
	From this viewpoint, Theorem~\ref{thm:atomic_implies_1obstructed_via_conjecture} says that under a certain condition, if the Mukai vector stays algebraic along a given first order deformation direction, then the object can be lifted to this first order deformation.   
\end{rmk}
The following is evidence supporting Conjecture~\ref{conj:serre_duality_image_obstruction_map}. 
\begin{prop}
	Let $\CE\in \Db(X)$ be a simple 1-obstructed object such that its Mukai vector is not annihilated by the LLV algebra, e.g.\ $\CE$ is a sheaf. Then $\CE$ satisfies Conjecture~\ref{conj:serre_duality_image_obstruction_map}. 
\end{prop}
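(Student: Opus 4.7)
The plan is to rerun the argument establishing Theorem~\ref{thm:atomic_implies_1obstructed_via_conjecture}, but replacing the unknown Conjecture~\ref{conj:serre_duality_image_obstruction_map} by the two pieces of information we do have: (i) the 1-obstructedness of $\CE$, which fixes the codimension of $\Ker(\chi_\CE) \subset \HH^2(X)$ to one, and (ii) the non-degeneracy of the Serre duality pairing on Hochschild (co)homology. The non-degeneracy assertion that the Conjecture asks for can then be read off from the latter.

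Concretely, I would first invoke Theorem~\ref{thm:1obstructed_implies_atomic} to deduce that $\CE$ is atomic, and Theorem~\ref{thm:atomic_equivalent_coh_obstrucion} to conclude that $\Ker(\obs_\CE) \subset \HT^2(X)$ has codimension one. Combined with the inclusion $\IK(\Ker(\chi_\CE)) \subset \Ker(\obs_\CE)$ from Lemma~\ref{lem:obstructed_implies_cohomologically_obstructed} and the hypothesis that the left-hand side is also of codimension one, this forces the equality $\IK(\Ker(\chi_\CE)) = \Ker(\obs_\CE)$. Thus for $\gamma \in \HH^2(X)$ with $\gamma_\CE \neq 0$ one has $\IK(\gamma) \lrcorner v(\CE) \neq 0$ in $\HO_2(X)$, which under $\Ik$ is equivalent to $\gamma \circ \chHH(\CE) \neq 0$ in $\HH_2(X)$.

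It then remains to produce $\mu \in \HH^{2n-2}(X)$ with $\Tr_X(\mu_\CE \circ \gamma_\CE) \neq 0$. The identity
\[
\Tr_X(\mu_\CE \circ \gamma_\CE) = \Tr_{X \times X}\bigl(\mu \circ (\gamma \circ \chHH(\CE))\bigr)
\]
from the defining property of the Hochschild Chern character reduces this to the claim that the pairing $\HH^{2n-2}(X) \otimes \HH_2(X) \to \BC$, $(\mu, \eta) \mapsto \Tr_{X \times X}(\mu \circ \eta)$, is non-degenerate. Since $\CE$ is simple, the non-vanishing of the trace is equivalent to $\mu_\CE \circ \gamma_\CE \neq 0$ in $\Ext^{2n}(\CE, \CE)$, and the Conjecture follows.

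The main technical point, and the only place where anything beyond bookkeeping is needed, is the non-degeneracy of the above pairing. Under $\IK$ and $\Ik$ it decomposes, according to Hodge bidegree, into the Serre duality pairings on $X$ between $\h^{2n-2-p}(X, \Lambda^p \CT_X)$ and $\h^{p+2}(X, \Omega^p_X)$ for $p = 0, \ldots, 2n-2$, each of which is non-degenerate; alternatively, one may use the fact that $\HH_\ast(X)$ is a free module of rank one over $\HH^\ast(X)$ generated by $\sigma^n$ to reduce the claim to the standard Serre duality pairing $\HH^{2n-2}(X) \otimes \HH^{2n+2}(X) \to \HH^{4n}(X) \to \BC$ on Hochschild cohomology of the Calabi--Yau fourfold-like kernel $\Delta_\ast\CO_X$ on $X \times X$.
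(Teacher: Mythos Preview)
Your proof is correct and arrives at the same endpoint as the paper's argument---namely, the non-degeneracy of the pairing $\HH^{2n-2}(X)\otimes\HH_2(X)\to\BC$ combined with the defining property of the Hochschild Chern character---but you take a somewhat longer route to get there. You invoke Theorem~\ref{thm:1obstructed_implies_atomic} and Theorem~\ref{thm:atomic_equivalent_coh_obstrucion} to pass through atomicity and establish the equality $\IK(\Ker(\chi_\CE))=\Ker(\obs_\CE)$, so that any $\gamma$ with $\gamma_\CE\neq 0$ satisfies $\gamma\circ\chHH(\CE)\neq 0$. The paper instead uses 1-obstructedness directly to reduce to a \emph{single} representative: since the image of $\chi_\CE$ in degree two is one-dimensional, it suffices to exhibit one $\mu\in\HT^2(X)$ with $\mu\lrcorner v(\CE)\neq 0$ (which exists precisely because $v(\CE)$ is not annihilated by $\Fg(X)$) and then find a companion $\gamma\in\HT^{2n-2}(X)$ via the non-degenerate intersection pairing on $\h^\ast(X,\BC)\cong\HT^\ast(X)$. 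Your approach has the advantage of making the kernel equality explicit and handling all $\gamma$ at once; the paper's is shorter and avoids appealing to the atomicity machinery, using only the raw hypothesis on $v(\CE)$.
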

\begin{proof}
	Since $\CE$ is 1-obstructed we only need to show Conjecture~\ref{conj:serre_duality_image_obstruction_map} for one non-zero representative of the image of $\chi_\CE$ in $\Ext^2(\CE,\CE)$. This means we need to find one element in the image of
	\[
	\chi_\CE \colon \HH^{2n-2}(X) \to \Ext^{2n-2}(\CE,\CE)
	\]
	which pairs non trivially with the one-dimensional subspace of $\Ext^2(\CE,\CE)$ given by the image of $\chi_\CE$. 
	
	By assumption, $v(\CE)$ is not annihilated by the LLV algebra $\Fg(X)$. As demonstrated in the proof of Proposition~\ref{prop:atomic_annihilator_codimension} this means that there exists $\mu \in \HT^2(X)$ such that $e_\mu(v(\CE)) = \mu \lrcorner v(\CE) \neq 0$. Using \eqref{eq:iso_rings_HH_HT_HO}, \cite[Lem.\ 2.5]{TaelmanDerHKLL} and the fact that the intersection pairing on $\h^\ast(X,\BC)$ is non-degenerate, we see that there exists $\gamma \in \HT^{2n-2}(X)$ such that $0 \neq \gamma \lrcorner e_\mu(v(\CE)) = (\gamma \wedge \mu) \lrcorner v(\CE) \in \h^{2n}(X,\CO_X)$. 

	Defining $\tau = (\IK)^{-1}(\gamma\wedge \mu)$ and employing the defining property of the Hochschild Chern character we obtain
	\[
	0 \neq \Tr_{X \times X}(\tau \circ \chHH(\CE))=\Tr_X(\tau_\CE)=\Tr_X( (\IK)^{-1}(\gamma)_\CE \circ (\IK)^{-1}(\mu)_\CE).
	\]
	This proves the proposition. 
\end{proof}
Thus, 1-obstructed sheaves satisfy Conjecture~\ref{conj:serre_duality_image_obstruction_map} by Lemma~\ref{lem:atomic_sheaf_in_Verbitsky}. Moreover, if the 1-obstructed object $\CE$ satisfies the conclusion of Conjecture~\ref{conj:serre_duality_image_obstruction_map}, then by Theorem~\ref{thm:atomic_implies_1obstructed_via_conjecture} its Mukai vector $v(\CE)$ does not lie inside the subspace of trivial representations of the LLV algebra. 

We get the following consequence.
\begin{cor}
\label{cor:atomic_simple:1-obstructed_iff_Conjecture}
	Let $\CE\in \Db(X)$ be a simple atomic object. Then $\CE$ is 1-obstructed if and only if it satisfies the conclusion of Conjecture~\ref{conj:serre_duality_image_obstruction_map}.
\end{cor}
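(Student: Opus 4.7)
The plan is to obtain the corollary by combining the Proposition immediately preceding it with Theorem~\ref{thm:atomic_implies_1obstructed_via_conjecture}; both directions should reduce to these two results with only a small check in between.

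For the forward implication, assume $\CE$ is simple, atomic, and 1-obstructed. I would invoke the preceding Proposition, whose hypotheses are: simple, 1-obstructed, and $v(\CE)$ not annihilated by $\Fg(X)$. The only nontrivial thing to verify is therefore the last condition. But atomicity supplies a non-zero $\tv \in \tH(X,\BQ)$ with $\Ann(v(\CE)) = \Ann(\tv)$, and since the defining representation of $\Fg(X) \cong \Fs\Fo(\tH(X,\BQ))$ on $\tH(X,\BQ)$ is faithful, $\Ann(\tv) \subsetneq \Fg(X)$ (of codimension $b_2(X)+1$ by Proposition~\ref{prop:atomic_annihilator_codimension}). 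Hence $\Ann(v(\CE))$ is a proper subalgebra, so $v(\CE)$ is not annihilated by the LLV algebra, and the Proposition applies to yield Conjecture~\ref{conj:serre_duality_image_obstruction_map}.

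For the reverse implication, assume $\CE$ is simple, atomic, and satisfies the conclusion of Conjecture~\ref{conj:serre_duality_image_obstruction_map}. This is exactly the setting of Theorem~\ref{thm:atomic_implies_1obstructed_via_conjecture} (the ``$\Leftarrow$'' direction of the biconditional), which directly yields that $\CE$ is 1-obstructed.

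There is no real obstacle here: the work has already been done in the Proposition and in Theorem~\ref{thm:atomic_implies_1obstructed_via_conjecture}. The one point one must be careful about is not to circularly invoke the conjecture in the forward direction — this is avoided because the Proposition does not assume the conjecture, only 1-obstructedness and non-annihilation of the Mukai vector. Thus the corollary simply records the equivalence between atomic 1-obstructed objects and atomic objects satisfying the conjecture, by noting that atomicity supplies the LLV non-annihilation condition for free.
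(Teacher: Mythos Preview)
Your proposal is correct and matches the paper's intended argument: the corollary is stated without proof precisely because it follows immediately from the preceding Proposition (forward direction) together with Theorem~\ref{thm:atomic_implies_1obstructed_via_conjecture} (reverse direction), with atomicity providing the non-annihilation of $v(\CE)$ needed for the Proposition. One small remark: the word ``faithful'' alone does not quite justify $\Ann(\tv) \subsetneq \Fg(X)$ for nonzero $\tv$ (faithfulness only says no element of $\Fg(X)$ annihilates \emph{every} vector), but your parenthetical appeal to Proposition~\ref{prop:atomic_annihilator_codimension}, or equivalently the irreducibility of the standard representation, does the job.
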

In particular, for a simple object $\CE$ consider the three properties: $\CE$ is atomic, $\CE$ is 1-obstructed, $\CE$ satisfies Conjecture~\ref{conj:serre_duality_image_obstruction_map}. Then any two of these properties imply the remaining one. 
\section{Vector bundles and torsion-free sheaves}
\label{sec:hyperhol_and_modular_bundles}
We will recall the notion and relevant results of Verbitsky concerning (projectively) hyperholomorphic bundles. This will be applied in the next section to study the deformation theory of slope (poly)stable bundles. We will compare this notion as well as the notion of a modular sheaf of O'Grady with being atomic. 
\subsection{Hyperholomorphicity}
Let $\CE$ be a vector bundle on a hyper-Kähler manifold $X$. For every Kähler class $\omega$ in the Kähler cone $\CK_X$ there exists by Yau's solution to Calabi's conjecture \cite[Thm.\ 23.5]{GroHuyJoyCY} a unique hyper-Kähler metric $g$ on the underlying real manifold such that $\omega=[\omega_I]$, where $\omega_I=g(I(\_),\_)$. We denote the complex structures corresponding to the hyper-Kähler metric $g$ by $I,J,K$. We denote the resulting twistor deformation by $\pi \colon \CX \to \BP^1_\omega$.
\begin{defn}
	A Hermitian connection $\nabla$ on $\CE$ is called \textit{$(\omega)$-hyperholomorphic}, if $\nabla$ is integrable with respect to each complex structure induced by the hyper-Kähler metric $g$.
\end{defn}
The three complex structures $I,J,K$ induce naturally an $\mathrm{SU}(2)$-action on the cohomology $\h^\ast(X,\BC)$. Note that the associated Lie algebra $\Fs\Fu(2)$ is contained in the LLV algebra $\Fg(X)_\BC$ and its action has degree zero with respect to the grading given by $h$. A cohomology class $x\in \h^\ast(X,\BC)$ is $\mathrm{SU}(2)$-invariant if and only if it is of type $(p,p)$ for all Hodge structures induced by all complex structures obtained from the hyper-Kähler metric $g$ (for more see \cite[Sec.\ 1]{VerbitskyHyperholomorphicoverHK}). Here are several results related to hyperholomorphic bundles which we will need later on:
\begin{itemize}
	\item Every $\omega$-hyperholomorphic bundle $\CE$ is $\omega$-slope polystable\footnote{Sum of slope stable bundles with the same slope.} \cite[Thm.\ 2.3]{VerbitskyHyperholomorphicoverHK}. For the induced curvature $\Theta$ we have $\Lambda_\omega(\Theta)=0$. 
	\item A Hermitian connection $\nabla$ on a holomorphic bundle $\CE$ is $\omega$-hyperholomorphic if and only if its curvature $\Theta$ is $\mathrm{SU}(2)$-invariant. Furthermore, a polystable bundle $\CE$ is hyperholomorphic if and only if $\Rc_1(\CE)$ and $\Rc_2(\CE)$ are $\mathrm{SU}(2)$-invariant \cite[Thm.\ 3.9]{VerbitskySheavesGeneralK3Tori}.
	\item The pullback of a hyperholomorphic bundle $\CE$ to the associated twistor line admits a holomorphic structure over the twistor space $\pi \colon \CX \to \BP^1_\omega$ \cite[Lem.\ 1.1]{KaledinVerbitskyNonHermYMConnections}. A bundle $\CE$ is hyperholomorphic if and only if there exists a holomorphic bundle $\CF$ on the twistor space $\CX$ such that the restriction to $X$ of $\CF$ is $\CE$, see \cite[Def.\ 2.2]{HuybrechtsSchroer} and the paragraph afterwards. 
\end{itemize}
\begin{defn}
	A bundle $\CE$ is called ($\omega$-)\textit{projectively hyperholomorphic}, if the traceless curvature $\Theta_{tl}$ is $\mathrm{SU}(2)$-invariant for the induced hyper-Kähler structure. 
\end{defn}
Equivalently, $\CE$ is projectively hyperholomorphic if and only if $\mathscr{End}(\CE)$ is hyperholomorphic \cite[Prop.\ 11.1]{VerbitskyHyperholomorphicoverHK}.  
\subsection{Comparison of notions for bundles on hyper-Kähler manifolds}
\label{subsec:comparison_bundles_notion_on_HK}
We recall here the element 
\[
\kappa(\CE) \coloneqq \ch(\CE)\exp\left(-\frac{\ci_1(\CE)}{r}\right)\in \h^\ast(X,\BQ)
\]
for a torsion-free sheaf $\CE$ of rank $\rk(\CE)=r$ and its discriminant
\[
\Delta(\CE) \coloneqq -2r \ch_2(\CE) + \ch_1(\CE)^2.
\]

In \cite{OGradyModularSheaves}, O'Grady proposed a notion of modular sheaves. 
\begin{defn}
A torsion-free sheaf $\CE$ is \textit{modular} if the projection of $\Delta(\CE)$ to the Verbitsky component is a multiple of the dual of the BBF form $\mathsf{q}_2\in \SH^4(X,\BQ)$. 
\end{defn}

Let us compare the notions of atomicity, (projective) hyperholomorphicity and modularity for a bundle $\CE$. 
\begin{lemma}
\label{lem:atomic_Chern_pp}
	Let $\CE$ be a torsion-free atomic sheaf. Then $\kappa(\CE)$ and $\Delta(\CE)$ remain of Hodge type for all Kähler deformations of $X$. If $\CE$ is a vector bundle, the same is true for $\ch(\CE\otimes \CE^\vee)$. 
\end{lemma}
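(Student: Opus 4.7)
The plan is to reduce the statement to an $\mathrm{SU}(2)$-invariance assertion on the extended Mukai vector and then descend to cohomology via an exponentiated LLV operator. Throughout I use the criterion recalled after Definition~\ref{defn:atomic_objects_via_Mukaivector}: a class in $\h^\ast(X,\BR)$ remains of Hodge type for all Kähler deformations if and only if it is $\mathrm{SU}(2)$-invariant for the $\Fs\Fu(2) \subset \Fg(X)_\BR$ attached to the chosen hyper-Kähler metric.

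Since $\CE$ is torsion-free atomic, Proposition~\ref{prop:atomic_object_scaling_rk_c1_neq_0} supplies $\tv = r\alpha + \Rc_1(\CE) + s\beta \in \tH(X,\BQ)$ with $r = \rk(\CE) > 0$ and $\Ann(v(\CE)) = \Ann(\tv)$. I will then exponentiate the nilpotent element $N := e_{-\Rc_1(\CE)/r} \in \Fg(X)_\BC$. On $\h^\ast(X,\BC)$ the operator $\exp(N)$ acts as cup product with $\exp(-\Rc_1(\CE)/r)$ and hence carries $v(\CE) = \ch(\CE)\tdd$ to $\kappa(\CE)\tdd$. On $\tH(X,\BQ)$ the same group element lies in $\mathrm{SO}(\tH)$, and a short calculation using $e_\omega(\alpha) = \omega$, $e_\omega(\nu) = \mathrm{q}(\omega,\nu)\beta$, $e_\omega(\beta) = 0$ yields
\[
\tv' := \exp(N)(\tv) = r\alpha + \Bigl(s - \tfrac{\mathrm{q}(\Rc_1(\CE))}{2r}\Bigr)\beta \in \BQ\alpha \oplus \BQ\beta.
\]
Conjugation by $\exp(N)$ is an automorphism of $\Fg(X)_\BC$ that is compatible with both of its representations, so it identifies $\Ann(v(\CE)) = \Ann(\tv)$ with $\Ann(\kappa(\CE)\tdd) = \Ann(\tv')$. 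Since $\alpha$ and $\beta$ carry the trivial rational Hodge structure, they are of type $(0,0)$ in $\tH$ for every complex structure in the twistor family of any hyper-Kähler metric on $X$; consequently $\tv'$ is $\mathrm{SU}(2)$-invariant and $\Fs\Fu(2) \subset \Ann(\tv') = \Ann(\kappa(\CE)\tdd)$. Hence $\kappa(\CE)\tdd$ is of Hodge type for every complex structure in the twistor family, and since the Levi-Civita holonomy of $X$ lies in $\mathrm{Sp}(n)$ the tangent bundle is hyperholomorphic, so $\tdd$ is itself $\mathrm{SU}(2)$-invariant and invertible; dividing gives the claim for $\kappa(\CE)$, and letting the Kähler class vary handles all Kähler deformations. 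The cohomological identity $\Delta(\CE) = -2r\,\kappa_2(\CE)$ obtained from a direct expansion, together with the formulas $\kappa_m(\CE^\vee) = (-1)^m \kappa_m(\CE)$ and $\ch(\CE \otimes \CE^\vee) = \kappa(\CE)\,\kappa(\CE^\vee)$ (the $\exp(\pm \Rc_1(\CE)/r)$ factors cancel), immediately extends the conclusion to $\Delta(\CE)$ and to $\ch(\CE \otimes \CE^\vee)$.

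The main obstacle is the careful unwinding of how the $\mathrm{SU}(2)$-subalgebra of $\Fg(X)_\BR$ coming from a hyper-Kähler metric sits inside $\Fs\Fo(\tH)$ and in particular annihilates $\BQ\alpha \oplus \BQ\beta$, so that the annihilator transport via $\mathrm{Ad}(\exp N)$ genuinely delivers Hodge-type constancy on $\h^\ast(X)$ rather than merely on $\tH$. Once this compatibility is properly recorded, the computations of $\tv'$, of $\kappa_2$, and of the dualisation are all routine.
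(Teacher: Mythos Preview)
Your proof is correct and follows essentially the same route as the paper: both apply the exponentiated operator $\exp(e_{-\Rc_1(\CE)/r})$ to transport $v(\CE)$ to $\kappa(\CE)\tdd$ and $\tv$ to a vector in $\BQ\alpha\oplus\BQ\beta$, then use that the Weil operators (equivalently the $\Fs\Fu(2)$-subalgebra) annihilate $\alpha,\beta$ to conclude Hodge-type invariance, and finish $\Delta(\CE)$ and $\ch(\CE\otimes\CE^\vee)$ via the identical identities $\Delta(\CE)=-2r\kappa_2(\CE)$ and $\ch(\CE\otimes\CE^\vee)=\kappa(\CE)\kappa(\CE^\vee)$. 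The only cosmetic slip is the cross-reference: the $\mathrm{SU}(2)$-invariance criterion you invoke is recalled in Section~\ref{sec:hyperhol_and_modular_bundles}, not after Definition~\ref{defn:atomic_objects_via_Mukaivector}.
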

\begin{proof}
	The sheaf $\CE$ is atomic and by Proposition~\ref{prop:atomic_object_scaling_rk_c1_neq_0} there exists $\tv(\CE)\in \tH(X,\BQ)$ such that
	\[
	\Ann(v(\CE))= \Ann(\tv(\CE))\subset \Fg(X).
	\]
	
	Note that $\kappa(\CE)$ is of type Hodge type if and only if the class
	\[
	\tilde{\kappa}(\CE) \coloneqq \ch(\CE)\td^{1/2}\exp\left(-\frac{\ci_1(\CE)}{r}\right)=v(\CE)\exp\left(-\frac{\ci_1(\CE)}{r}\right)\in \h^\ast(X,\BQ)
	\]
	is of Hodge type. The isometry given by multiplication with $\exp(-\ci_1(\CE)/r)$ is the integrated action of the operator $e_{-\ci_1(\CE)/r}$ given by cup product with the class $-\ci_1(\CE)/r \in\h^{1,1}(X,\BQ)$. We therefore obtain the equality
	\[
	\Ann(\tilde{\kappa}(\CE)) = \Ann \left( v(\CE)\exp\left(-\frac{\ci_1(\CE)}{r}\right) \right) =  \Ann\left(\tv(\CE)\exp\left( -\frac{\ci_1(\CE)}{r} \right)\right).
	\]
	
	From Proposition~\ref{prop:atomic_object_scaling_rk_c1_neq_0} we infer
	\[
	\tv \coloneqq \tv(\CE)\exp\left(- \frac{\ci_1(\CE)}{r} \right)=r\alpha + t\beta \in \tH(X,\BQ)
	\]
	for some $t\in \BQ$. In particular, for every possible complex structure $I$ and associated Weil operator $W_I$ we have $W_I \in \Ann(\tv) =\Ann(\tilde{\kappa}(\CE))$. This proves that $\kappa(\CE)$ remains of Hodge type. The assertion for $\Delta(\CE)$ follows from the identity
	\[
	-2r\kappa(\CE)_4=\Delta(\CE),
	\]
	where $\kappa(\CE)_4\in \h^4(X,\BQ)$ is the degree four component of $\kappa(\CE)$. 
	
	If $\CE$ is a vector bundle, we use 
	\[
	\ch(\CE \otimes \CE^\vee)=\ch(\CE) \ch(\CE^\vee)= \left( \ch(\CE)\exp\left(- \frac{\ci_1(\CE)}{r} \right) \right) \left( \ch(\CE^\vee) \exp\left( \frac{\ci_1(\CE)}{r} \right) \right) .
	\]
	By what we have already proven, the right hand side is the product of two classes which are of Hodge type for all Kähler deformations. This finishes the proof. 
\end{proof}
For an object $\CE \in \Db(X)$ which is atomic the proof also shows that the class $\ch(\CE \otimes^{\mathrm{L}} \mathrm{R}\HomS(\CE,\CO_X))$ stays algebraic for all possible complex structures. 

The lemma immediately implies Proposition~\ref{prop:atomic_implies_modular} which is a strengthening of \cite[Thm.\ 3.4]{MarkmanObs}. We can also now proof the relationship with projectively hyperholomorphic bundles alluded to in the introduction. 
\begin{proof}[Proof of Proposition~\ref{prop:atomic_is_proj_hyperholomorphic}]
	Since $\CE$ is $\omega$-polystable so is the bundle $\mathscr{End}(\CE)$, i.e.\ $\EndS(\CE)$ decomposes into the direct sum of indecomposable $\omega$-slope stable bundles of the same slope. Now $\CE$ is $\omega$-projectively hyperholomorphic if and only if $\mathscr{End}(\CE)$ is $\omega$-hyperholomorphic \cite[Prop.\ 11.1]{VerbitskyHyperholomorphicoverHK}. By \cite[Thm.\ 2.5]{VerbitskyHyperholomorphicoverHK} we know that $\mathscr{End}(\CE)$ is hyperholomorphic if and only if $\Rc_1(\EndS(\CE))$ and $\Rc_2(\EndS(\CE))$ remain of Hodge type $(p,p)$ for all complex structures induced by the twistor space associated to $\omega$. This follows from Lemma~\ref{lem:atomic_Chern_pp}.
\end{proof}
The converse in the above statements does not hold. A counterexample is given by the tangent bundle $\CT_X$ on higher-dimensional hyper-Kähler manifolds $X$, see Proposition~\ref{prop:tangent_bundle_not_atomic}. 

We obtain also the following which is similar to \cite[Thm.\ 3.4]{MarkmanObs} where the statement is also essentially proved under stronger assumptions. 
\begin{prop}
\label{prop:atomic_bundles_deform_projective_bundle}
	Let $\CE$ be a slope stable atomic bundle. Then $\BP(\CE)$ deforms over the whole moduli space of Kähler deformations of $X$. 
\end{prop}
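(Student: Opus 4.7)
The plan is to leverage Proposition~\ref{prop:atomic_is_proj_hyperholomorphic} and Verbitsky's machinery for hyperholomorphic bundles. Since $\CE$ is slope stable (in particular polystable) and atomic, Proposition~\ref{prop:atomic_is_proj_hyperholomorphic} tells us that $\CE$ is projectively hyperholomorphic, which by \cite[Prop.\ 11.1]{VerbitskyHyperholomorphicoverHK} is equivalent to $\EndS(\CE)$ being honestly hyperholomorphic. The strategy is then to transport this hyperholomorphicity from $\EndS(\CE)$ to $\BP(\CE)$ via the Azumaya-algebra / Brauer-class dictionary, and to upgrade the resulting twistor-line deformation to a deformation over all of the Kähler moduli space.

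First, I would fix a Kähler class $\omega \in \CK_X$ and consider the associated twistor family $\pi \colon \CX \to \BP^1_\omega$. By the third bullet recalled in Section~\ref{sec:hyperhol_and_modular_bundles}, the pullback of the hyperholomorphic bundle $\EndS(\CE)$ to $\CX$ carries a holomorphic structure, i.e.\ extends to a holomorphic vector bundle $\CF$ on the twistor space. Next I would check that the algebra structure of $\EndS(\CE)$ extends as well: since the multiplication $\EndS(\CE) \otimes \EndS(\CE) \to \EndS(\CE)$ is a morphism of hyperholomorphic bundles (all three are hyperholomorphic and the morphism is parallel for the natural Hermitian connections), it extends to a morphism of holomorphic bundles on $\CX$ making $\CF$ into a sheaf of Azumaya algebras over $\CX$ restricting to $\EndS(\CE)$ on the central fibre. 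The associated Brauer-Severi variety $\BP(\CF) \to \CX$ is then a deformation of $\BP(\CE)$ over $\BP^1_\omega$.

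To pass from a single twistor line to the whole moduli space of Kähler deformations of $X$, I would invoke the fact (essentially due to Verbitsky) that any two Kähler complex structures on the underlying $C^\infty$-manifold can be connected by a chain of twistor lines. At each stage, the deformed projective bundle remains projectively hyperholomorphic (the property of $\EndS$ being hyperholomorphic is preserved by the twistor deformation, since the Chern characters $\ci_1$ and $\ci_2$ of $\EndS$ keep staying of type $(p,p)$ along the way, by the same Lemma~\ref{lem:atomic_Chern_pp} input that gave Proposition~\ref{prop:atomic_is_proj_hyperholomorphic}), so the above argument applies inductively and glues the deformations of $\BP(\CE)$ across the connecting chain.

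The main obstacle I would expect is the extension of the algebra structure in step two: one must make sure that the holomorphic bundle $\CF$ on the twistor space is canonical enough that the multiplication map lifts uniquely, and that the resulting Azumaya structure really encodes a projective bundle deforming $\BP(\CE)$. The cleanest way to handle this is via Kaledin--Verbitsky's description \cite[Lem.\ 1.1]{KaledinVerbitskyNonHermYMConnections} of the holomorphic structure on the twistor extension in terms of the hyperholomorphic connection, which is compatible with tensor products and hence with algebra structures. Once this is in place, the rest is a formal gluing argument along twistor chains.
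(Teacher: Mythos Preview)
Your approach is essentially the same as the paper's: use Proposition~\ref{prop:atomic_is_proj_hyperholomorphic} to get projective hyperholomorphicity, extend $\EndS(\CE)$ (equivalently $\BP(\CE)$) holomorphically over each twistor space, and then propagate along chains of twistor lines connecting any two points of the K\"ahler moduli space. The paper packages your Azumaya/Brauer--Severi argument into the single reference \cite[Prop.\ 2.3]{HuybrechtsSchroer}, so on that point you are just unpacking a black box.

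There is, however, one genuine gap in your inductive step. When you arrive at a new complex structure $X_t$ along a twistor line, the deformed $\EndS(\CE)_t$ is hyperholomorphic and hence polystable \emph{with respect to the K\"ahler class $\omega_t$ determined by that twistor family}. But to continue along the next twistor line in the chain you must start from a \emph{different} K\"ahler class $\omega'$ on $X_t$, and Verbitsky's criterion (the second bullet in Section~\ref{sec:hyperhol_and_modular_bundles}) requires polystability with respect to $\omega'$ as a hypothesis, not just the invariance of $\ci_1,\ci_2$. Your sentence ``the property of $\EndS$ being hyperholomorphic is preserved \ldots\ since the Chern characters keep staying of type $(p,p)$'' conflates a necessary condition with a sufficient one. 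The paper closes this gap by invoking the fact that $\CE$ is modular (Proposition~\ref{prop:atomic_implies_modular}) and O'Grady's wall--and--chamber result \cite[Sec.\ 3]{OGradyModularSheaves}, which guarantees slope stability for an open subcone of ample classes at each stage; this is exactly what lets you choose the next twistor line freely. You should add this input to make the chaining argument go through.
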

\begin{proof}
	From what has just been proven we know that $\CE$ is also modular as well as projectively hyperholomorphic. By \cite[Sec.\ 3]{OGradyModularSheaves} we know that there is an open subcone of the ample cone for which $\CE$ remains slope stable and projectively hyperholomorphic. Moreover, from Lemma~\ref{lem:atomic_Chern_pp} we know that the traceless curvature $\Theta_{tl}$ is of type $(2,2)$ for all possible complex structures. The result follows now from \cite[Prop.\ 2.3]{HuybrechtsSchroer} and the fact that each two points in the moduli space are connected by twistor lines, see \cite[Thm.\ 3.2]{VerbitskyCohomologyHK}. 
\end{proof}
We note that in the proof of Proposition~\ref{prop:atomic_is_proj_hyperholomorphic} we did not use the condition of $\CE$ being atomic explicitly, but only the consequence that all Chern classes (we only needed $\ci_2$) of $\mathscr{End}(\CE)$ stay of Hodge type. This leads to the following. 
\begin{prop}
	A modular vector bundle $\CE$ is $\omega$-projectively hyperholomorphic if and only if $\CE$ is $\omega$-slope polystable and the projection of $\ci_2(\CE)$ to the complement $\SH(X,\BQ)^\perp$ of the Verbitsky component stays of type $(2,2)$ for all induced complex structures of the hyper-Kähler structure.
\end{prop}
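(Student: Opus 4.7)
The plan is to reduce the statement, via the equivalence recalled in Section~\ref{subsec:comparison_bundles_notion_on_HK}, to a condition on the Chern classes of $\mathscr{End}(\CE)$. Recall that $\CE$ is $\omega$-projectively hyperholomorphic if and only if $\mathscr{End}(\CE)$ is $\omega$-hyperholomorphic, and by Verbitsky's characterization \cite[Thm.\ 3.9]{VerbitskySheavesGeneralK3Tori} (cited earlier in the section), this is equivalent to $\mathscr{End}(\CE)$ being $\omega$-slope polystable together with $\Rc_1(\mathscr{End}(\CE))$ and $\Rc_2(\mathscr{End}(\CE))$ remaining of type $(p,p)$ for all complex structures induced by the twistor family of $\omega$. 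Since polystability of $\CE$ and polystability of $\mathscr{End}(\CE)$ are equivalent, the entire statement is now a question about the Hodge type of $\ci_1(\mathscr{End}(\CE))$ and $\ci_2(\mathscr{End}(\CE))$.

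Next, I would carry out the Chern class computation. One has $\ci_1(\mathscr{End}(\CE))=0$, so this class is trivially of type $(p,p)$ in every complex structure. A direct computation from $\ch(\mathscr{End}(\CE))=\ch(\CE)\,\ch(\CE^\vee)$ (expanding the degree four piece) yields $\ch_2(\mathscr{End}(\CE)) = -\Delta(\CE)$ and hence, using $\ci_1(\mathscr{End}(\CE))=0$, the identity
\[
\ci_2(\mathscr{End}(\CE)) = \Delta(\CE) = 2r\,\ci_2(\CE) - (r-1)\,\ci_1(\CE)^2.
\]
Thus the remaining question is whether $\Delta(\CE)$ stays of type $(2,2)$ for every complex structure in the twistor family.

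Now I would use modularity. Splitting with respect to the orthogonal decomposition
\[
\h^\ast(X,\BQ) = \SH(X,\BQ) \oplus \SH(X,\BQ)^\perp
\]
one writes $\Delta(\CE) = \Delta(\CE)_\SH + \Delta(\CE)_{\SH^\perp}$. By definition of modularity, $\Delta(\CE)_\SH = \lambda\,\mathsf{q}_2$ for some $\lambda\in\BQ$, and $\mathsf{q}_2 \in \SH^4(X,\BQ)$ is $\mathrm{SU}(2)$-invariant for any hyper-Kähler metric (being a canonical class attached to the BBF form, as used e.g.\ in the cited Verbitsky references), so $\Delta(\CE)_\SH$ automatically remains of type $(2,2)$ for all complex structures in the twistor family. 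Since $\ci_1(\CE)\in \h^2(X,\BQ)\subset \SH(X,\BQ)$ gives $\ci_1(\CE)^2 \in \SH^4(X,\BQ)$, the projection onto $\SH(X,\BQ)^\perp$ yields
\[
\Delta(\CE)_{\SH^\perp} = 2r\,\ci_2(\CE)_{\SH^\perp}.
\]
Therefore $\Delta(\CE)$ is $\mathrm{SU}(2)$-invariant if and only if $\ci_2(\CE)_{\SH^\perp}$ stays of type $(2,2)$ for all induced complex structures, which together with the polystability assertion gives both implications.

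The only potentially delicate input is the fact that $\mathsf{q}_2$ is $\mathrm{SU}(2)$-invariant with respect to any hyper-Kähler metric. This is however well-known: $\mathsf{q}_2$ is defined directly from the BBF form, which is a topological invariant preserved along the twistor family, and Verbitsky's results on the Hodge structure of the Verbitsky component explicitly place $\mathsf{q}_2$ in the invariant part (it is also the only $\Fg(X)$-invariant line in $\SH^4$ after imposing the Hodge-type condition in all complex structures, as used implicitly in the proof of Proposition~\ref{prop:Atomic_Verbitsky_component_one_trivial_representation} via \cite[Prop.\ 2.14]{LooijengaLunts}). All other steps are formal manipulations, so no serious obstacle is expected.
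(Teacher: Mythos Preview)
Your proposal is correct and follows exactly the argument the paper intends: the proposition is stated without proof, but the preceding sentence makes clear that it is extracted from the proof of Proposition~\ref{prop:atomic_is_proj_hyperholomorphic} by isolating the Chern-class input, and you have reconstructed precisely that computation (reducing to $\ci_2(\mathscr{End}(\CE))=\Delta(\CE)$, splitting along $\SH\oplus\SH^\perp$, and using modularity for the Verbitsky part). One small point to tighten: the assertion ``polystability of $\CE$ and polystability of $\mathscr{End}(\CE)$ are equivalent'' is stronger than what you actually need; for the ``if'' direction only $\CE$ polystable $\Rightarrow$ $\mathscr{End}(\CE)$ polystable is used, and for the ``only if'' direction polystability of $\CE$ is already built into the notion of (projective) hyperholomorphicity as set up in the paper.
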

For example if $\CE$ is a $\omega$-slope polystable modular vector bundle such that $\ci_2(\CE)\in \SH(X,\BQ)$, then $\CE$ is $\omega$-projectively hyperholomorphic. 
\section{Deformation theory of stable atomic vector bundles}
\label{sec:deformation_theory_atomic}
Let $X$ be a hyper-Kähler manifold of dimension $2n$. Throughout this section we fix an $H$-projectively hyperholomorphic vector bundle $\CE$ on $X$ which is slope stable for some ample line bundle $H$. In particular, $\CE$ is simple, i.e.\ $\Hom(\CE,\CE)=\BC\id$. In this section we want to study the deformation theory of the bundle $\CE$ on $X$. 
\subsection{Deformation theory}
We introduce the functor and notions we want to study. For more details we refer to \cite{SerneseDeformation}. 

The deformation functor we consider is the covariant functor
\[
\Def_\CE \colon \Art/\BC \to \Sets
\]
from Artinian local $\BC$-algebras with residue field $\BC$ to sets which assigns to $A\in \Art/\BC$ the isomorphism classes of pairs $(\CF,t)$, where $\CF$ is a coherent sheaf on $X \times \Spec(A)$ flat over $\Spec(A)$ and $t$ is an isomorphism between the restriction of $\CF$ to $X \times \Spec(\BC)$ and $\CE$. The deformation functor $\Def_{\CE}$ has a tangent-obstruction theory given by $T^1=\Ext^1(\CE,\CE)$ and $T^2=\Ext^2(\CE,\CE)_0$, where $\Ext^2(\CE,\CE)_0$ denotes the kernel of the natural trace morphism
\[
\Tr \colon \Ext^2(\CE,\CE) \to \h^2(X,\CO_X).
\]
One can define a formal map
\[
\kappa=\kappa_2 + \kappa_3 + \dots \colon \widehat{\Ext^1(\CE,\CE)}\to \Ext^2(\CE,\CE)_0,
\]
called \textit{Kuranishi map}, whose scheme-theoretic fibre $\kappa^{-1}(0)$ is the base space of the formal semiuniversal deformation of $\CE$. The quadratic part $\kappa_2$ is the usual Yoneda pairing. 
\subsection{Formality}
The main result of this section is the following, which will also imply Theorem~\ref{prop:atomicVBareformal} from the introduction. 
\begin{thm}
\label{prop:atomicVBareformal}
	Let $\CE$ be an $H$-projectively hyperholomorphic vector bundle on a hyper-Kähler manifold which is $H$-slope stable. Then the dg algebra $\mathrm{R}\mathscr{Hom}(\CE^{\oplus k},\CE^{\oplus k})$ is formal for all $k>0$.
\end{thm}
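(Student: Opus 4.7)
The plan is to reduce to the case $k=1$ and then derive formality from the hyperholomorphic structure on $\mathscr{End}(\CE)$ via a twisted $\partial\bar\partial$-lemma.

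\textbf{Reduction to $k=1$.} There is a natural isomorphism of dg algebras
\[
\mathrm{R}\mathscr{Hom}(\CE^{\oplus k},\CE^{\oplus k}) \;\cong\; \mathrm{R}\mathscr{Hom}(\CE,\CE) \otimes_{\BC} M_k(\BC),
\]
and tensoring with the finite-dimensional semisimple algebra $M_k(\BC)$ preserves formality: any zig-zag of dg algebra quasi-isomorphisms $\mathrm{R}\mathscr{Hom}(\CE,\CE)\leftrightarrows \Ext^\ast(\CE,\CE)$ witnessing formality tensors componentwise with $M_k(\BC)$ to give a corresponding zig-zag for the matrix enlargement. Hence one is reduced to proving the statement for $k=1$.

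\textbf{Dolbeault model via the hyperholomorphic structure.} Since $\CE$ is $H$-projectively hyperholomorphic, the bundle $\mathscr{End}(\CE)$ is $H$-hyperholomorphic by \cite[Prop.\ 11.1]{VerbitskyHyperholomorphicoverHK}. Using Yau's theorem, fix the hyper-Kähler metric representing $H$ together with the induced complex structures $I,J,K$; the Chern connection on $\mathscr{End}(\CE)$ then has $\mathrm{SU}(2)$-invariant curvature. The dg algebra $\mathrm{R}\mathscr{Hom}(\CE,\CE)$ is represented by the twisted Dolbeault complex
\[
A^\bullet \;\coloneqq\; \bigl(\Omega^{0,\bullet}(X,\mathscr{End}(\CE)),\,\bar\partial,\,\wedge\bigr),
\]
so it suffices to prove that $A^\bullet$ is a formal dg algebra.

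\textbf{Formality via the twisted $\partial\bar\partial$-lemma.} The essential input is Verbitsky's quaternionic Kähler identities for hyperholomorphic bundles, which yield the twisted $\partial\bar\partial$-lemma: every $\bar\partial$-closed, $\partial$-exact element of $\Omega^{\bullet,\bullet}(X,\mathscr{End}(\CE))$ is $\partial\bar\partial$-exact. Granted this, the classical Deligne--Griffiths--Morgan--Sullivan argument adapts verbatim. Writing $Z^\bullet \subset \Omega^{0,\bullet}(X,\mathscr{End}(\CE))$ for the sub-dg-algebra of $\partial$-closed elements, the inclusion and the projection
\[
A^\bullet \;\hookleftarrow\; (Z^\bullet,\bar\partial,\wedge) \;\twoheadrightarrow\; \bigl(\Ext^\bullet(\CE,\CE),\,0,\,\text{Yoneda}\bigr)
\]
are both quasi-isomorphisms of dg algebras, so $A^\bullet$ is quasi-isomorphic to its cohomology equipped with the Yoneda product and zero differential.

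\textbf{Main obstacle.} The delicate point is setting up the twisted $\partial\bar\partial$-lemma carefully: one must verify that Verbitsky's Kähler identities extend to the $\mathscr{End}(\CE)$-valued situation using only the $\mathrm{SU}(2)$-invariance of the curvature, and then check that the DGMS zig-zag respects the wedge product of endomorphism-valued forms (not merely the differentials). An alternative route, which avoids this by-hand verification, is to use the extension of $\mathscr{End}(\CE)$ to a holomorphic bundle on the twistor space $\mathcal{X}\to \BP^1$ of $X$ and invoke a Kaledin-type formality theorem for such twistor families of dg algebras; either approach yields the desired formality.
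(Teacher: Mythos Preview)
Your reduction to $k=1$ is clean, and your alternative route at the end --- spreading $\mathscr{End}(\CE)$ over the twistor line and invoking a Kaledin-type formality-in-families result --- is precisely what the paper does (using Verbitsky's computation $\mathrm{R}^i\pi_\ast\CF \cong \CO_{\BP^1}(i)\otimes \h^i(X,\CF)$ to verify the flatness hypotheses of Kaledin's theorem, then arguing as in Kaledin--Lehn). So that part is correct and matches the paper.

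The primary DGMS-style argument, however, has a genuine gap. The Deligne--Griffiths--Morgan--Sullivan zig-zag works on the de Rham complex because both $d$ and $d^c$ are differentials on the \emph{same} graded algebra $\Omega^\bullet$, so one can form $\ker d^c \twoheadrightarrow H_{d^c}$ and use the $dd^c$-lemma to see that $d$ vanishes on the quotient. In your Dolbeault model this mechanism is not available: $\partial$ maps $\Omega^{0,q}(X,\mathscr{End}(\CE))$ into $\Omega^{1,q}$, so it is not a second differential on $A^\bullet=\Omega^{0,\bullet}$. Consequently $Z^\bullet=\ker\partial\cap\Omega^{0,\bullet}$ is indeed a sub-dg-algebra, but the would-be quotient $H_\partial$ on $\Omega^{0,\bullet}$ is just $Z^\bullet$ again (since $\mathrm{im}\,\partial\cap\Omega^{0,\bullet}=0$), and $\bar\partial$ is certainly not zero there. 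Your ``projection'' $Z^\bullet\twoheadrightarrow \Ext^\bullet(\CE,\CE)$ is therefore not a well-defined map of dg algebras as written; there is no DGMS-type target on which $\bar\partial$ dies. Even if a twisted $\partial\bar\partial$-lemma holds for $\mathscr{End}(\CE)$-valued forms, it does not by itself produce formality of the Dolbeault dga --- this is already the case for the trivial bundle on a compact K\"ahler manifold, where DGMS gives formality of de Rham, not of Dolbeault.

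A repair in the hyper-K\"ahler setting would have to replace $\partial$ by an operator that genuinely acts on $\Omega^{0,\bullet}_I$ --- for instance one built from $\bar\partial_J$ via the quaternionic structure on the hyperholomorphic bundle --- and then carefully check multiplicativity of the resulting zig-zag. That is substantial work and is not what you sketched; in practice it is exactly why the paper (and Kaledin--Lehn, Zhang, Budur--Zhang before it) goes through the twistor family instead.
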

Recall that a dg algebra is formal if it is quasi-isomorphic to its cohomology algebra. 

For K3 surfaces, the study of formality of the endomorphism algebra goes back to work of Kaledin--Lehn \cite{KaledinLehn} and Kaledin--Lehn--Sorger \cite{KaldeinLehnSorger}. They proved the result for direct sums of ideal sheaves of zero-dimensional subvarieties. Zhang \cite{ZhangFormality} and later Budur--Zhang \cite{BudurZhangFormality} extended it to all slope polystable sheaves on K3 surfaces. The main ingredient in all of the proofs is the following result of Kaledin \cite[Thm.\ 4.2]{KaledinRemarksFormality}. 
\begin{thm}
\label{thm:KaledinFormalityFamilies}
	Let $\CA^\bullet$ be a dg algebra of quasi-coherent and flat sheaves on an integral scheme $X$ and denote by $\CB^\bullet$ its cohomology algebra. Assume that the sheaves $\CB^\bullet$ are coherent and flat on $X$ and that for all $i,l\in \BZ$ the degree $l$ component $\CH\CH^i_l(\CB^\bullet)$ of the $i$-th Hochschild cohomology sheaf $\CH\CH^i(\CB^\bullet)$ is also coherent and flat. 
	\begin{enumerate}[label={\upshape(\roman*)}]
		\item For $X$ affine, formality of $\CA^\bullet_x$ over a generic point $x\in X$ implies formality for all points $x \in X$.
		\item If $\CH\CH^2_l(\CB^\bullet)$ has no global sections for all $l\leq -1$, then the dg algebra $\CA^\bullet_x$ is formal for all $x\in X$.
	\end{enumerate}
\end{thm}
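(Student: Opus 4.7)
\medskip

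\noindent\textbf{Proof plan.} The overall strategy is to apply the family formality result of Kaledin (Theorem~\ref{thm:KaledinFormalityFamilies}) to the twistor deformation of $\CE$, reducing the formality claim over $X$ to formality at a very general twistor fibre. First, I would carry out a Morita reduction from $\CE^{\oplus k}$ to $\CE$: there is an isomorphism of dg algebras
\[
\mathrm{R}\mathscr{H}om(\CE^{\oplus k},\CE^{\oplus k})\;\simeq\; \mathrm{Mat}_k(\BC)\otimes_\BC \mathrm{R}\mathscr{H}om(\CE,\CE),
\]
and formality is manifestly preserved by tensoring with a finite-dimensional semisimple algebra concentrated in degree zero. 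Hence it is enough to treat the case $k=1$.

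Next I would build the family of dg algebras needed by Theorem~\ref{thm:KaledinFormalityFamilies}. Because $\CE$ is $H$-projectively hyperholomorphic, $\EndS(\CE)$ is $H$-hyperholomorphic (by \cite[Prop.~11.1]{VerbitskyHyperholomorphicoverHK}), and therefore extends to a holomorphic bundle $\mathcal A$ over the entire twistor space $\pi\colon \CX\to \BP^1_\omega$ (by \cite[Lem.~1.1]{KaledinVerbitskyNonHermYMConnections}). Using a relative Dolbeault (or relative \v{C}ech) resolution of $\mathcal A$ over $\pi$, I would produce a sheaf $\CA^\bullet$ of flat dg algebras on the integral curve $\BP^1_\omega$ whose restriction to each closed point $t\in \BP^1_\omega$ is a model for $\mathrm{R}\mathrm{Hom}(\mathcal A_t,\mathcal A_t)\simeq \mathrm{R}\mathscr{H}om(\CE_t,\CE_t)$, where $\CE_t$ is the holomorphic bundle on the twistor fibre $X_t$ whose projectivisation is the twistor deformation of $\BP(\CE)$. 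The cohomology sheaves $\CB^\bullet=\CH^\bullet(\CA^\bullet)$ are coherent and flat on $\BP^1_\omega$ because $\pi$ is proper and smooth and the fibrewise dimensions $\dim\Ext^\bullet(\CE_t,\CE_t)$ are constant along the twistor line (this constancy uses that $\EndS(\CE)$ extends as a hyperholomorphic, hence $C^\infty$-rigid, bundle). Upper-semicontinuity of the Hochschild cohomology sheaves of $\CB^\bullet$ over $\BP^1_\omega$ then yields the flatness/coherence hypotheses of Theorem~\ref{thm:KaledinFormalityFamilies}, at least after restricting to an affine open subset of $\BP^1_\omega$ containing the distinguished point $0\in \BP^1_\omega$ corresponding to $X$.

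With Theorem~\ref{thm:KaledinFormalityFamilies}(i) in hand, it now suffices to exhibit \emph{one} twistor fibre at which formality holds. For a very general $t\in \BP^1_\omega$ the complex structure on the underlying differentiable manifold has trivial Picard rank and $X_t$ is non-algebraic. On such a fibre the bundle $\CE_t$ is simple and the full hyper-Kähler $\mathrm{SU}(2)$-action naturally acts on the Dolbeault dg algebra $(\Omega^{0,\bullet}(X_t,\EndS(\CE_t)),\bar\partial)$: the curvature $\Theta_{tl}$ is $\mathrm{SU}(2)$-invariant by hyperholomorphicity, and consequently the Chern connection's $\bar\partial$-operator is interchanged with its conjugates under the rotation $I\mapsto J\mapsto K$. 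Harmonic theory then identifies the cohomology with the space of $\mathrm{SU}(2)$-invariant harmonic forms, and the cup product and Massey products are controlled by $\mathrm{SU}(2)$-equivariance. The formality at the generic twistor point would thus follow by the standard Deligne--Griffiths--Morgan--Sullivan type argument adapted to the hyperholomorphic setting, producing a zig-zag of quasi-isomorphisms between $\mathrm{R}\mathrm{Hom}(\CE_t,\CE_t)$ and its cohomology algebra.

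The main obstacle is precisely this last step: upgrading the $\mathrm{SU}(2)$-symmetry of the hyperholomorphic Dolbeault complex to a genuine formality quasi-isomorphism at the generic twistor fibre. A clean alternative would be to invoke Theorem~\ref{thm:KaledinFormalityFamilies}(ii) instead and to show that $\CH\CH^2_l(\CB^\bullet)$ has no global sections on $\BP^1_\omega$ for $l\leq -1$; this requires a sufficiently fine analysis of the Hochschild cohomology of $\Ext^\bullet(\CE,\CE)$ viewed as a graded algebra with its $\Fg(X)$-module structure, and I expect the LLV-equivariance (together with the fact that the image of the cohomological obstruction map is of pure Hodge type in the hyperholomorphic direction) to furnish the required vanishing. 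Handling these weight/Hochschild estimates uniformly across the twistor base — and ruling out exotic negative-weight classes coming from the Verbitsky-component complement — is the most delicate part of the argument.
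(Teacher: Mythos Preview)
Your proposal is not a proof of the stated theorem at all. Theorem~\ref{thm:KaledinFormalityFamilies} is Kaledin's general formality-in-families criterion for a dg algebra of sheaves on an integral scheme; the paper does not prove it but merely quotes it from \cite[Thm.\ 4.2]{KaledinRemarksFormality}. What you have written is instead a proof plan for Theorem~\ref{prop:atomicVBareformal} (formality of $\mathrm{R}\mathscr{Hom}(\CE^{\oplus k},\CE^{\oplus k})$ for a projectively hyperholomorphic bundle), which \emph{invokes} Theorem~\ref{thm:KaledinFormalityFamilies} as a black box. So as a proof of the stated result there is nothing to compare: you have simply addressed the wrong statement.

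If one reads your text as an attempt at Theorem~\ref{prop:atomicVBareformal}, then your strategy is broadly aligned with the paper's but misses the key input that makes the argument go through cleanly. The paper lifts $\mathscr{End}(\CE^{\oplus k})$ to a bundle $\CF$ on the twistor space and then uses Verbitsky's computation $\mathrm{R}^i\pi_\ast\CF\cong\CO_{\BP^1}(i)\otimes\h^i(X,\CF)$ \cite[Prop.\ 6.3]{VerbitskySheavesGeneralK3Tori}. This shows at once that $\CB^\bullet$ is locally constant as a sheaf of algebras, so its Hochschild cohomology sheaves are locally trivial; one then finishes exactly as in \cite[Prop.\ 3.1]{KaledinLehn}, where the explicit $\CO_{\BP^1}(i)$-twists force $\CH\CH^2_l(\CB^\bullet)$ to have no global sections for $l\le -1$, and part~(ii) of Theorem~\ref{thm:KaledinFormalityFamilies} applies directly. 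Your plan instead aims for part~(i), trying to establish formality at a generic twistor fibre by an $\mathrm{SU}(2)$-equivariant Deligne--Griffiths--Morgan--Sullivan argument; you yourself flag this as the main obstacle, and indeed no such argument is needed. The Morita reduction to $k=1$ is harmless but unnecessary, and your speculation that LLV-equivariance or the cohomological obstruction map would be required to control $\CH\CH^2_l$ is off target: the vanishing comes purely from the $\CO_{\BP^1}(i)$-grading on the twistor base, with no input from the LLV algebra.
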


We will also apply this statement to prove the main result. Our proof follows roughly the arguments of \cite[Prop.\ 3.1]{KaledinLehn} and \cite[Thm.\ 1.3]{ZhangFormality} with the necessary modifications. 
\begin{proof}[Proof of Theorem~\ref{prop:atomicVBareformal}]
	We consider the induced hyper-Kähler metric on $X$ and the induced twistor line $\pi \colon \CX\to \BP^1$. We can lift the bundle $\mathscr{End}(\CE^{\oplus k}, \CE^{\oplus k})$ to a holomorphic bundle $\CF$ on $\CX$ \cite[Thm.\ 5.12]{KaledinVerbitskyNonHermYMConnections}. Consider the sheaf of dg algebras
	\[
	\mathrm{R}\HomS_{\CX/\BP^1}(\CO_\CX,\CF) = \mathrm{R}\pi_\ast \mathrm{R} \HomS(\CO_\CX,\CF)
	\]
	on $\BP^1$ and the sheaf of algebras $\CB^\bullet = \ExtS_{\CX/\BP^1}^\bullet (\CO_{\CX},\CF)$ associated to the dg algebra by taking cohomology. 
	
	Verbitsky \cite[Prop.\ 6.3]{VerbitskySheavesGeneralK3Tori} proved that
	\begin{equation}
		\label{eq:81018}
	\mathrm{R}^i\pi_\ast(\CF) \cong \CO_{\BP^1}(i) \otimes_\BC \h^i(X,\CF)
	\end{equation}
	for all $i\in \BZ$. Since $\CB^i=\RR^i\pi_\ast(\CF)$, we conclude that the sheaves of algebras $\CB^\bullet$ are coherent and flat. Moreover, \eqref{eq:81018} shows that $\CB^\bullet$ is locally constant as a sheaf of algebras. This implies that its Hochschild cohomology sheaves $\CH\CH^\bullet(\CB^\bullet)$ are locally trivial and we can apply Theorem~\ref{thm:KaledinFormalityFamilies}. The proof proceeds now as the proof of \cite[Prop.\ 3.1]{KaledinLehn}. 
\end{proof}
Using Proposition~\ref{prop:atomic_is_proj_hyperholomorphic} we see that Theorem~\ref{prop:atomicVBareformal} also proves Theorem~\ref{cor:atomic_VB_formal_RHom} from the introduction. 
\subsection{Moduli spaces}
For a slope stable projectively hyperholomorphic vector bundle $\CE$ Verbitsky showed that $\CE$ satisfies the quadraticity property \cite[Thm.\ 6.2, 11.2]{VerbitskyHyperholomorphicoverHK}. That is, the scheme-theoretic fibre $\kappa^{-1}(0)$ of the Kuranishi map is isomorphic to the fibre $\kappa_2^{-1}$ of its quadratic part. 

Note that formality for the dg algebra $\mathrm{R}\mathscr{Hom}(\CE,\CE)$ implies formality of the dg Lie algebra associated to $\mathrm{R}\mathscr{Hom}(\CE,\CE)$. If a dg Lie Algebra has trivial differential $d=0$, then it is well-known that the equations defining the versal deformation space are quadratic \cite{GoldmanMillsonHomotopyKuranishi}. In particular, if $\mathrm{R}\mathscr{Hom}(\CE,\CE)$ is formal, then its versal deformation space is cut out by quadrics. Hence, we recover the above result of Verbitsky.
\begin{cor}
\label{cor:proj_hyperhol_quadraticity}
	Let $\CE$ be a slope stable projectively hyperholomorphic vector bundle. Then its associated versal deformation space $\kappa^{-1}(0)$ is isomorphic to $\kappa_2^{-1}(0)$ and has at most quadratic singularities.
\end{cor}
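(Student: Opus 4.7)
The plan is to extract Corollary~\ref{cor:proj_hyperhol_quadraticity} directly from the formality statement Theorem~\ref{prop:atomicVBareformal} by passing through the dg Lie algebra associated with $\mathrm{R}\mathscr{Hom}(\CE,\CE)$ and invoking standard deformation theory. All of the work has already been done in the formality theorem; the corollary is really a translation.

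First, I would apply Theorem~\ref{prop:atomicVBareformal} to $\CE$ (with $k=1$) to conclude that the dg algebra $\mathrm{R}\mathscr{Hom}(\CE,\CE)$ is formal, i.e.\ quasi-isomorphic to its cohomology algebra $\Ext^\ast(\CE,\CE)$ equipped with the Yoneda product. Passing to commutators, the associated dg Lie algebra $L^\bullet \coloneqq \mathrm{R}\mathscr{Hom}(\CE,\CE)[1]$ (or more precisely its trace-zero part $L^\bullet_0$, which governs $\Def_\CE$) is also formal, so it is quasi-isomorphic to the graded Lie algebra $H^\bullet(L_0)$ with zero differential.

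Second, I would invoke the standard principle, going back to Goldman--Millson \cite{GoldmanMillsonHomotopyKuranishi}, that the deformation functor associated to a dg Lie algebra depends only on its quasi-isomorphism class, and that for a dg Lie algebra with trivial differential the Maurer--Cartan equation is purely quadratic (it reduces to $[x,x]=0$). Since $\Def_\CE$ is governed by $L^\bullet_0$ and since $L^\bullet_0$ is formal, the associated versal deformation space is isomorphic to the scheme-theoretic zero fibre of the quadratic map induced by the Yoneda bracket on $\Ext^1(\CE,\CE)$ with values in $\Ext^2(\CE,\CE)_0$. This quadratic map is exactly $\kappa_2$, so $\kappa^{-1}(0)\cong \kappa_2^{-1}(0)$ as formal schemes.

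The conclusion that $\kappa^{-1}(0)$ has at most quadratic singularities is then immediate from the description as the zero fibre of a quadratic map between vector spaces. I do not expect a genuine obstacle: the only subtlety is ensuring one works with the \emph{reduced} (trace-zero) deformation complex so that obstructions land in $\Ext^2(\CE,\CE)_0$ rather than in $\Ext^2(\CE,\CE)$, and using that formality of the full dg algebra restricts to formality of the trace-zero summand since the trace map is a morphism of dg algebras splitting the inclusion $\h^\ast(X,\CO_X)\hookrightarrow \Ext^\ast(\CE,\CE)$ (here one uses simplicity of $\CE$, which gives $\Hom(\CE,\CE)=\BC\cdot \id$, so that this splitting is compatible with the product in cohomology and hence lifts to the dg level by formality). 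Once this bookkeeping is in place, the corollary follows, thereby recovering Verbitsky's quadraticity result \cite[Thm.\ 6.2, 11.2]{VerbitskyHyperholomorphicoverHK}.
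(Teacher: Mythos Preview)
Your proposal is correct and follows essentially the same route as the paper: apply Theorem~\ref{prop:atomicVBareformal} to obtain formality of $\mathrm{R}\mathscr{Hom}(\CE,\CE)$, pass to the associated dg Lie algebra, and invoke Goldman--Millson \cite{GoldmanMillsonHomotopyKuranishi} to conclude that the Kuranishi space is cut out by quadrics. The paper's argument is terser and does not spell out the trace-zero bookkeeping you mention, but the logical structure is identical.
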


Thus, to study (locally) the moduli space of slope stable atomic vector bundles $\CE$ one is lead to the study of the pairing
\[
\Ext^1(\CE,\CE) \times \Ext^1(\CE,\CE) \to \Ext^2(\CE,\CE)
\]
whose induced quadratic map $\Ext^1(\CE,\CE) \to \Ext^2(\CE,\CE)$ yields $\kappa_2$. We state here the following.
\begin{conjecture}\label{conj:Skew-symmetry}
	Let $\CE$ be a slope stable atomic vector bundle. Then the pairing\[
	\Ext^1(\CE,\CE) \times \Ext^1(\CE,\CE)\to \Ext^2(\CE,\CE)
	\]
	is skew-symmetric.
\end{conjecture}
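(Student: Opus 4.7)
The most natural route is to reduce the conjecture to the expected topological nature of the extension algebra highlighted before Conjecture~\ref{conj:Skew-symmetry}. If for a simple atomic bundle $\CE$ there is a graded ring isomorphism $\Ext^\ast(\CE,\CE) \cong \h^\ast(Y,\BC)$ for some smooth compact manifold $Y$ --- for instance an atomic Lagrangian Fourier--Mukai partner as in Section~\ref{sec:Atomic_Lagrangians} --- then the assertion is immediate, since cup product is graded commutative and hence satisfies $a\cup b = -b\cup a$ on $\h^1(Y,\BC)$, which translates directly into skew-symmetry of the Yoneda pairing. From this vantage point, Conjecture~\ref{conj:Skew-symmetry} is a consequence of the stronger topological conjecture and should perhaps be attacked as part of the same package.

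For a more intrinsic attack, I would proceed in three steps. First, by Theorem~\ref{prop:atomicVBareformal} one may replace $\mathrm{R}\mathscr{Hom}(\CE,\CE)$ by its cohomology $\Ext^\ast(\CE,\CE)$, so only the Yoneda product remains to be analysed. Second, since $\CE$ is projectively hyperholomorphic by Proposition~\ref{prop:atomic_is_proj_hyperholomorphic}, the bundle $\mathscr{End}(\CE)$ extends to a holomorphic bundle $\CF$ on the twistor family $\pi\colon \CX \to \BP^1$, and Verbitsky's computation~\eqref{eq:81018}, $\RR^i\pi_\ast\CF \cong \CO_{\BP^1}(i)\otimes_\BC \h^i(X,\mathscr{End}(\CE))$, endows $\Ext^\ast(\CE,\CE)$ with an $\Fs\Fl_2$-action placing $\Ext^i$ in weight $i$. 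Third, I would test the identity $xy+yx=0$ by pairing with $z\in \Ext^{2n-2}(\CE,\CE)$ via the Serre duality trace, and then convert the question through diagram~\eqref{diag:obstruction_and_HKR} and the Hochschild Chern character into a cohomological statement on $X$, where the wedge product on $\HT^\ast(X)$ is honestly graded commutative.

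The main obstacle lies in the last step, namely passing from commutativity of the wedge product on polyvectors to skew-symmetry of Yoneda on $\Ext^1(\CE,\CE)$. Since $\HT^1(X)=0$ for a hyper-Kähler manifold, the obstruction map $\chi_\CE$ vanishes in degree one, so classes in $\Ext^1(\CE,\CE)$ do not come from $\HH^1(X)$, and the commutativity of $\HT^\ast(X)$ cannot be transported naively. Instead, one must use the twistor $\Fs\Fl_2$-action from the second step to realise $\Ext^1(\CE,\CE)$ as a weight-one representation and show, using the atomicity condition $\Ann(v(\CE))=\Ann(\tv(\CE))$ together with Proposition~\ref{prop:Atomic_Verbitsky_component_one_trivial_representation}, that the induced map $\Sym^2\Ext^1(\CE,\CE)\to \Ext^2(\CE,\CE)$ lands in an $\Ann(v(\CE))$-invariant subspace whose dimension is forced to be at most that of the antisymmetric component. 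Establishing this compatibility between the twistor-theoretic Lefschetz structure and Yoneda multiplication is where the essential difficulty resides.
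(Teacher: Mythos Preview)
The statement is a \emph{conjecture} in the paper, not a proven result; there is no proof in the paper to compare against. What the paper offers is partial evidence: the case $\Ext^2(\CE,\CE)\cong\BC$ via the trace, and Proposition~\ref{prop:atomic_Lagrangian_Ext_Graded_Commutative} for line bundles on atomic Lagrangians combined with the derived-equivalence trick of Proposition~\ref{prop:Proof_Conj_Skew_Symmetry_k3_hyperbolic}. Your proposal is likewise not a proof but a strategy outline, and you are candid about this.

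Your first paragraph is correct and matches the paper's viewpoint: graded commutativity of $\Ext^\ast(\CE,\CE)$ would of course imply the conjecture, and the paper explicitly records this as the expected mechanism. The first two steps of your intrinsic approach are also fine and are exactly the ingredients the paper assembles. The genuine gap lies entirely in your third step and the final paragraph. You correctly observe that $\HT^1(X)=0$ means $\Ext^1(\CE,\CE)$ is invisible to the obstruction map, so nothing can be read off from commutativity of $\HT^\ast(X)$. But the proposed workaround---that the $\Fs\Fl_2$-action together with $\Ann(v(\CE))=\Ann(\tv(\CE))$ forces $\Sym^2\Ext^1(\CE,\CE)\to\Ext^2(\CE,\CE)$ to land in a subspace of controlled dimension---does not make sense as stated. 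The annihilator $\Ann(v(\CE))\subset\Fg(X)$ is defined through its action on $\h^\ast(X,\BQ)$, not on $\Ext^\ast(\CE,\CE)$; there is no construction in the paper, nor an obvious one, producing an action of this Lie subalgebra on the extension groups. Proposition~\ref{prop:Atomic_Verbitsky_component_one_trivial_representation} concerns trivial subrepresentations inside $\SH(X,\BQ)$ and says nothing about $\Ext^\ast(\CE,\CE)$. The twistor $\Fs\Fl_2$ does act on $\Ext^\ast(\CE,\CE)$, but an $\Fs\Fl_2$-equivariant map $\Sym^2 V_1\to V_2$ between weight spaces is in no way forced to vanish. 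So the closing sentence names a difficulty without resolving it; as written, this remains a heuristic rather than an argument.
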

Conjecture~\ref{conj:Skew-symmetry} implies that the moduli space of slope stable torsion-free sheaves with Mukai vector $v=v(\CE)$ is smooth at the point $[E] \in M(v)$ corresponding to the stable atomic bundle $\CE$. 

We could prove formality using the concept of (projective) hyperholomorphicity. Considering Conjecture~\ref{conj:serre_duality_image_obstruction_map} we see that the bundle $\CE$ in Conjecture~\ref{conj:Skew-symmetry} is speculated to be 1-obstructed. We believe that this property could enable one to prove smoothness at the point $[\CE]$ of the moduli space corresponding to the stable atomic bundle. 

We note here the following.
\begin{cor}
	Let $X$ be a hyper-Kähler manifold and $\CE$ a projectively hyperholomorphic bundle such that $\Ext^2(\CE,\CE) \cong \BC$. Then $\CE$ satisfies Conjecture~\ref{conj:Skew-symmetry}. 
\end{cor}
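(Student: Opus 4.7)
The plan is to deduce the skew-symmetry of the Yoneda pairing $\Ext^1(\CE,\CE) \times \Ext^1(\CE,\CE) \to \Ext^2(\CE,\CE)$ from the graded cyclic property of the trace map
\[
\Tr \colon \Ext^\ast(\CE,\CE) \to \h^\ast(X,\CO_X),
\]
together with the observation that, under the hypotheses of the corollary, this trace is bijective in degree two. Neither formality nor projective hyperholomorphicity really enters in a decisive way here: the essential inputs are that $\CE$ is a vector bundle of positive rank, that $\dim \h^2(X,\CO_X) = 1$ on a hyper-Kähler $X$, and the assumption $\Ext^2(\CE,\CE) \cong \BC$.

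The first step is to show that $\Tr \colon \Ext^2(\CE,\CE) \to \h^2(X,\CO_X)$ is surjective. The target is one-dimensional, spanned by the class $\bar\sigma$ associated to the symplectic form. Multiplying $\id_\CE \in \Hom(\CE,\CE)$ by $\bar\sigma$ via the natural $\h^\ast(X,\CO_X)$-module structure on $\Ext^\ast(\CE,\CE)$ produces a class in $\Ext^2(\CE,\CE)$ whose trace equals $\rk(\CE)\,\bar\sigma$, and this is nonzero since $\CE$ is a vector bundle of positive rank. Combined with $\dim \Ext^2(\CE,\CE) = 1$, this forces $\Tr$ to be an isomorphism in degree two.

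For the second step, for $a, b \in \Ext^1(\CE,\CE)$ I would invoke the graded cyclic identity
\[
\Tr(a \cdot b) \;=\; -\,\Tr(b \cdot a) \quad\in\quad \h^2(X,\CO_X),
\]
and then use the injectivity of $\Tr$ on $\Ext^2(\CE,\CE)$ just established to conclude $a \cdot b + b \cdot a = 0$, which is precisely the skew-symmetry predicted by Conjecture~\ref{conj:Skew-symmetry}.

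The only point that requires citation rather than on-the-spot verification is the graded cyclic identity $\Tr(f \cdot g) = (-1)^{|f||g|}\Tr(g \cdot f)$ for the trace on the derived endomorphism algebra $\mathrm{R}\mathscr{End}(\CE)$. This is the sign-graded analogue of the classical cyclicity $\tr(fg) = \tr(gf)$ on $\End(\CE)$, and is standard in the literature on Chern characters and traces of perfect complexes; it ultimately rests on the fact that the sheaf-level trace $\mathrm{R}\mathscr{End}(\CE) \to \CO_X$ is cyclic as a morphism in the derived category. Apart from this input, the argument uses nothing beyond the two numerical hypotheses of the corollary.
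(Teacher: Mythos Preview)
Your argument is correct and follows essentially the same route as the paper: both show that the trace $\Tr\colon \Ext^2(\CE,\CE)\to \h^2(X,\CO_X)$ is an isomorphism and then invoke the graded cyclicity of the trace pairing to deduce skew-symmetry of the Yoneda product in degree one. Your explicit justification that the trace is surjective in degree two (via $\bar\sigma\cdot\id_\CE$ having trace $\rk(\CE)\,\bar\sigma\neq 0$) is a welcome detail that the paper leaves implicit.
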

\begin{proof}
	By assumption the trace morphism
	\[
	\Tr_\CE \colon \Ext^2(\CE,\CE) \to \h^2(X,\CO_X)
	\]
	is an isomorphism in this case and the composition
	\[
	\Ext^i(\CE,\CE) \times \Ext^j(\CE,\CE) \xrightarrow{\circ} \Ext^{i+j}(\CE,\CE) \xrightarrow{\Tr_\CE} \h^{i+j}(X,\CO_X)
	\]
	is well-known to be graded-commutative. 
\end{proof}

For more evidence for Conjecture~\ref{conj:Skew-symmetry} see Proposition~\ref{prop:atomic_Lagrangian_Ext_Graded_Commutative}. 
\section{Atomic Lagrangian}
\label{sec:Atomic_Lagrangians}
Lagrangian submanifolds inside hyper-Kähler manifolds are an active part of current research. We recommend \cite{HuybrechtsMauriLagrangian} for an account of some of the known results and questions. We want to discuss in this section Lagrangian submanifolds with a view towards atomicity. 
\subsection{Definition and structural result}
We make the following definition.
\begin{defn}
	We call a connected Lagrangian submanifold $\iota \colon L\subset X$ \textit{atomic} if $\iota_\ast \CO_L$ is an atomic sheaf. 
\end{defn}
The main goal of this section is to prove Theorem~\ref{thm:atomic_Lagrangian_structure} from the introduction which completely determines when a Lagrangian submanifold is atomic.

In what follows, we will frequently implicitly use a result due to Voisin \cite[Lem.\ 1.5]{VoisinLagrangian}. It says that the kernel $\Ker(\iota^\ast) \subset \h^2(X,\BQ)$ of the pullback morphism
\[
\iota^\ast \colon \h^2(X,\BQ) \to \h^2(L,\BQ)
\]
is equal to the kernel of the composition
\[
\iota_\ast[L]\wedge \_ \colon \h^2(X,\BQ)\xrightarrow{\iota^\ast} \h^2(L,\BQ) \xrightarrow{\iota_\ast}\h^{2n+2}(X,\BQ)
\]
given by cupping with the fundamental class $\iota_\ast[L] \in \h^{2n}(X,\BQ)$ for a Lagrangian submanifold $L\subset X$.   
\begin{prop}
	\label{prop:Lagrangian_comparison_kernels_LLV}
	Let $\iota \colon L \subset X$ be a connected Lagrangian submanifold and denote by $W \subset \HT^2(X)$ the kernel of the contraction morphism
	\[
	\HT^2(X) \to \h^\ast(X,\BC), \quad \mu \mapsto \mu \lrcorner \iota_\ast [L]
	\]
	acting on the fundamental class $\iota_\ast [L] \in \h^{2n}(X,\BQ)$. Then, there is an isomorphism \[
	W\cong \Ker(\iota^\ast)
	\]
	of vector spaces with the kernel $\Ker(\iota^\ast) \subset \h^2(X,\BC)$ of the pullback morphism
	\[
	\iota^\ast \colon \h^2(X,\BC) \to \h^2(L,\BC).
	\]
\end{prop}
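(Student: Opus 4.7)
The strategy is to use the symplectic form $\sigma$ to identify $\HT^2(X) \cong \h^2(X, \BC)$ and then match the two kernels Hodge-component by Hodge-component, using Voisin's Lemma \cite[Lem.~1.5]{VoisinLagrangian} ($\iota^\ast \omega = 0 \iff \omega \cup \iota_\ast[L] = 0$) together with Cartan-type identities for interior products.

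Under $\sigma$, the three summands of $\HT^2(X) = \h^2(X,\CO_X) \oplus \h^1(X,\CT_X) \oplus \h^0(X,\Lambda^2\CT_X)$ correspond respectively to $\h^{0,2}(X) \oplus \h^{1,1}(X) \oplus \h^{2,0}(X) = \h^2(X, \BC)$, via identity on the first factor, $v \mapsto v \lrcorner \sigma$ on the second, and $v \wedge w \mapsto (v \lrcorner \sigma) \wedge (w \lrcorner \sigma)$ on the third. Because the contractions $\mu_i \lrcorner \iota_\ast[L]$ for $\mu_i$ lying in the three Hodge pieces land in the pairwise distinct bidegrees $(n, n{+}2)$, $(n{-}1, n{+}1)$, $(n{-}2, n)$, the kernel $W$ splits by Hodge type; the same holds for $\Ker(\iota^\ast)$. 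So it suffices to establish the matching on each Hodge piece.

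On the $\h^{0,2}$-piece contraction equals cup product, and Voisin's Lemma closes this case. For the $\h^{1,1}$- and $\h^{2,0}$-pieces, the Lagrangian condition $\iota^\ast \sigma = 0$ gives (via Voisin) $\sigma \cup \iota_\ast[L] = 0$ and therefore $\sigma^2 \cup \iota_\ast[L] = 0$. Applying the Cartan formula for interior products to these vanishing classes yields the cohomological identities
\[
\sigma \cup (v \lrcorner \iota_\ast[L]) = -(v \lrcorner \sigma) \cup \iota_\ast[L],
\qquad
\sigma^2 \cup \bigl((v \wedge w) \lrcorner \iota_\ast[L]\bigr) = -2\,(v \lrcorner \sigma)\wedge(w \lrcorner \sigma) \cup \iota_\ast[L].
\]
The right-hand sides are (up to sign and factor) $\tilde\mu_i \cup \iota_\ast[L]$ where $\tilde\mu_i \in \h^2(X, \BC)$ is the image of $\mu_i$ under the symplectic iso; combined with a second application of Voisin's Lemma this immediately gives the inclusion $W \hookrightarrow \Ker(\iota^\ast)$.

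The reverse inclusion reduces, via the same identities, to the injectivity of $\sigma \cup \colon \h^{n-1, n+1}(X) \to \h^{n+1, n+1}(X)$ and $\sigma^2 \cup \colon \h^{n-2, n}(X) \to \h^{n+2, n}(X)$. This is the main obstacle: since $\tq(\sigma) = 0$, classical Hard Lefschetz is not directly available for $\sigma$ itself. The injectivity can nevertheless be established by deforming $\sigma$ to a nearby class $\sigma + t\omega$ (with $\omega$ a Kähler class and $t$ small) having $\tq(\sigma + t\omega) \neq 0$, which enjoys Hard Lefschetz, and then descending to $\sigma$ by a continuity argument within the relevant pure Hodge components; alternatively, by decomposing $\h^\ast(X, \BC)$ into irreducible $\Fs\Fo(\tH(X,\BC))$-representations (cf.\ \cite{GKLRLLV}) and tracking the action of $e_\sigma$ explicitly on the finitely many weight spaces that appear.
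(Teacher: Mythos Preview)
Your approach is essentially the same as the paper's, just phrased via Cartan-type contraction identities rather than commutators in the LLV algebra. Your first identity $\sigma\cup(v\lrcorner\iota_\ast[L])=-(v\lrcorner\sigma)\cup\iota_\ast[L]$ is exactly the paper's computation $[e_\sigma,e_\mu]=e_{-\mu\lrcorner\sigma}$ (their Lemma on operator identification) evaluated at $\iota_\ast[L]$; the second identity follows by iterating the first. The paper then invokes Voisin's result just as you do.

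The one genuine weakness is your treatment of the ``main obstacle''. The injectivity of $e_\sigma$ on $\h^{n-1,n+1}(X)$ and of $e_\sigma^2$ on $\h^{n-2,n}(X)$ is not an obstacle at all: it is precisely the Hard Lefschetz property for the $\Fs\Fl_2$-triple $(e_\sigma,h_\sigma,\Lambda_\sigma)$ of Fujiki, where $h_\sigma|_{\h^{p,q}}=(p-n)\id$. With respect to this grading $e_\sigma$ has degree $+2$, and the spaces $\h^{n-1,n+1}$ and $\h^{n-2,n}$ sit in $h_\sigma$-degrees $-1$ and $-2$, so the required injectivity is immediate $\Fs\Fl_2$-representation theory. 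The paper uses exactly this; see the sentence ``As $e_\sigma$ has the Hard Lefschetz property for the grading given by $h_\sigma$''. Your proposed deformation argument is both unnecessary and, as stated, unsound: injectivity of a family of linear maps is an open condition, not a closed one, so passing to the limit $t\to 0$ does not preserve it without additional input. The alternative you suggest, decomposing into irreducible $\Fs\Fo(\tH)$-representations and tracking $e_\sigma$, would in effect rediscover the Fujiki $\Fs\Fl_2$-triple. Simply cite it.
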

\begin{proof}
	First, observe that the subspace $\h^2(X,\CO_X)$ is naturally contained in $\HT^2(X)$ as well as $\h^2(X,\BC)$ and the action given by contraction agrees with the cup product. Since $L$ is Lagrangian, we therefore have
	\[
	W \supset \h^2(X,\CO_X) \subset \Ker(\iota^\ast).
	\]
	
	 Moreover, for a symplectic form $\sigma \in \h^0(X,\Omega_X^2)$ there is an $\Fs\Fl_2$-triple \[(e_\sigma, h_\sigma, \Lambda_\sigma)\subset \Fg(X)_\BC,\] 
	 where $e_\sigma = \sigma \wedge \_$ is the operator given by cupping with $\sigma$ and ${{h_\sigma}_|}_{\h^{p,q}}= (p-n)\id$, see \cite{FujikiCohomology} and \cite[Sec.\ 2]{TaelmanDerHKLL}. The action of $\h^0(X,\wedge^2\CT_X)$ on $\h^\ast(X,\BC)$ via contraction agrees with the action of $\Lambda_\sigma$ up to a constant.
	 
	 Indeed, both operators are contained in the LLV algebra $\Fg(X)_\BC$ and the subspace of the LLV algebra consisting of operators sending $\h^{p,q}(X)$ to $\h^{p-2,q}(X)$ is one-dimensional. That is, up to scaling, there exists a unique operator having degree $-2$ for the grading given by $h$ and degree $2$ for the grading given by $h'$. 
	 
	 Since $L\subset X$ is Lagrangian we have $e_\sigma(\iota_\ast[L])=0$. This yields
	 \begin{equation}
	 \label{eq:abcefr}
	 	0 = h_\sigma(\iota_\ast[L]) = [e_\sigma,\Lambda_\sigma](\iota_\ast[L]) = e_\sigma(\Lambda_\sigma(\iota_\ast[L])) - \Lambda_\sigma (e_\sigma(\iota_\ast[L])) = e_\sigma(\Lambda_\sigma(\iota_\ast[L])).
	 \end{equation}
 	As $e_\sigma$ has the Hard Lefschetz property for the grading given by $h_\sigma$, we conclude that $e_\sigma(\Lambda_\sigma(\iota_\ast[L]))=0$ is equivalent to $\Lambda_\sigma(\iota_\ast[L])=0$. 
 	
 	It remains to identify $\h^1(X,\Omega^1_X)\cap \Ker(\iota^\ast)$ and $\h^1(X,\CT_X)\cap W$. The image of the contraction map
 	\[
 	\h^1(X,\CT_X) \to \h^{2n}(X,\BC), \quad \mu \mapsto \mu \lrcorner \iota_\ast[L]
 	\]
 	is contained in $\h^{n-1,n+1}(X)$. As recalled above, the operator $e_\sigma$ is injective when restricted to the subspace $\h^{n-1,n+1}(X)$. Hence, the subspace $\h^1(X,\CT_X)\cap W$ is equal to the kernel of the morphism
 	\begin{equation}
 	\label{eq:84514}
 		\h^1(X,\CT_X) \to \h^{n+1,n+1}(X), \quad \mu \mapsto e_\sigma (e_\mu (\iota_\ast[L]))
 	\end{equation}
 	where as before $e_\mu \in \Fg(X)_\BC$ denotes the operator given by contraction with $\mu$. Since $L$ is Lagrangian we have that
 	\[
 	[e_\sigma,e_\mu](\iota_\ast [L]) = e_\sigma (e_\mu (\iota_\ast[L]))
 	\]
	 which means that the kernel of \eqref{eq:84514} is equal to the kernel of the morphism
	 \begin{equation}
	 	\label{eq:85418}
	 	\h^1(X,\CT_X) \to \h^{n+1,n+1}(X), \quad \mu \mapsto [e_\sigma,e_\mu](\iota_\ast [L]).
	 \end{equation}
 	Lemma~\ref{lem:Lagrangian_identification_operators_LLV} below shows that the symplectic form $\sigma$ induces the isomorphism
 	\begin{equation}
 	\label{eq:91613}
 	[e_\sigma,\_] \colon \h^1(X,\CT_X) \cong \h^1(X,\Omega_X^1), \quad \mu \mapsto -\mu \lrcorner \sigma, 
 	\end{equation}
 	where we identified the spaces $\h^1(X,\CT_X)$ and $\h^1(X,\Omega_X^1)$ with the operators they induce inside $\Fg(X)_\BC$. 
 	
 	In particular, this implies that the kernel of \eqref{eq:85418}, which is equal to $W\cap \h^1(X,\CT_X)$, is via \eqref{eq:91613} identified with the kernel of
 	\[
 	\h^1(X,\Omega_X^1) \to \h^{n+1,n+1}(X), \quad \omega \mapsto \omega \wedge \iota_\ast[L].
 	\]
 	Recalling the result due to Voisin alluded to above finishes the proof. 
\end{proof}
\begin{lem}
\label{lem:Lagrangian_identification_operators_LLV}
	Consider a symplectic form $\sigma \in \h^0(X,\Omega_X^2)$ and let us identify the subspaces $\h^1(X,\CT_X)$ and $\h^1(X,\Omega_X^1)$ with the subspaces
	\[
	\h^1(X,\CT_X) \hookrightarrow \Fg(X)_\BC, \quad \mu \mapsto e_\mu \quad \text{ and } \quad \h^1(X,\Omega_X^1) \hookrightarrow \Fg(X)_\BC, \quad \omega \mapsto e_\omega
	\]
	via the corresponding operators they induce. 
	Then, the morphism 
	\[
	[e_\sigma,\_]\colon \Fg(X)_\BC \to \Fg(X)_\BC, \quad f \mapsto [e_\sigma,f]
	\]
	induces the isomorphism
	\[
	\h^1(X,\CT_X) \cong \h^1(X,\Omega_X^1), \quad \mu \mapsto -\mu \lrcorner \sigma. 
	\]
\end{lem}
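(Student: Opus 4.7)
The plan is to verify the identity $[e_\sigma, e_\mu] = e_{-\mu \lrcorner \sigma}$ by a direct computation at the level of Dolbeault representatives, and then invoke the non-degeneracy of $\sigma$ to deduce the isomorphism statement.

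First I would work with local Dolbeault representatives: a class $\mu \in \h^1(X,\CT_X)$ can be written locally in the form $\bar\omega \otimes v$ with $v$ a holomorphic vector field and $\bar\omega$ a $(0,1)$-form, so that as an operator on a $(p,q)$-form $\alpha$ one has $e_\mu(\alpha) = \bar\omega \wedge (v \lrcorner \alpha)$, while $e_\sigma$ is just wedging with the $(2,0)$-form $\sigma$. This is the standard identification of the operators on $\h^\ast(X,\Omega_X^\ast)$ induced by elements of $\HT^\ast(X)$ used implicitly throughout the paper.

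Next I would expand $e_\mu \circ e_\sigma$ using the graded derivation property of interior product; since $\sigma$ has even degree one gets $v \lrcorner (\sigma \wedge \alpha) = (v \lrcorner \sigma) \wedge \alpha + \sigma \wedge (v \lrcorner \alpha)$. Wedging with $\bar\omega$ and comparing with $(e_\sigma \circ e_\mu)(\alpha) = \sigma \wedge \bar\omega \wedge (v \lrcorner \alpha)$, one observes that $\sigma$ and $\bar\omega$ commute inside the wedge algebra (being of pure types $(2,0)$ and $(0,1)$ respectively), so the symmetric terms cancel in the commutator. What remains is cup product with $-\bar\omega \wedge (v \lrcorner \sigma)$, and this is precisely the Dolbeault representative of $-\mu \lrcorner \sigma \in \h^1(X, \Omega_X^1)$. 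Hence $[e_\sigma, e_\mu] = e_{-\mu \lrcorner \sigma}$ as elements of $\Fg(X)_\BC$.

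Finally, because $\sigma$ is a symplectic form, the sheaf homomorphism $\CT_X \to \Omega_X^1$, $v \mapsto v \lrcorner \sigma$, is an isomorphism of locally free sheaves; passing to $\h^1$ gives the claimed isomorphism $\h^1(X,\CT_X) \cong \h^1(X,\Omega_X^1)$, and the computation above realizes this isomorphism (up to sign) as $[e_\sigma, \_]$ under the two embeddings into $\Fg(X)_\BC$. The only delicate point I anticipate is the bookkeeping of signs in the Cartan formula and the reconciliation with whatever scaling convention is implicit in the identification of $\h^1(X,\CT_X)$ with a subspace of $\Fg(X)_\BC$; the conceptual content is otherwise completely transparent once the Dolbeault representatives are fixed.
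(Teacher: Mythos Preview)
Your argument is correct and complete (up to the sign bookkeeping you already flag), but it proceeds differently from the paper. The paper does not compute the commutator on an arbitrary form. Instead it first observes that $[e_\sigma,e_\mu]$ has degree $+2$ for the cohomological grading $h$ and degree $0$ for the Hodge grading $h'$, and that the subspace of $\Fg(X)_\BC$ with this bidegree is exactly the image of $\h^1(X,\Omega_X^1)$ under $\omega\mapsto e_\omega$. Since that subspace acts faithfully on $\One\in\h^0(X,\BC)$, it suffices to evaluate on the single element $\One$, where the computation is a one-liner: $e_\mu(\One)=0$ and $e_\mu(e_\sigma(\One))=\mu\lrcorner\sigma$. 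Your Dolbeault computation, by contrast, establishes the identity $[e_\sigma,e_\mu]=e_{-\mu\lrcorner\sigma}$ directly on every form via the Cartan formula, so you never need to invoke the bidegree description of $\Fg(X)_\BC$ or the faithfulness on $\One$. The paper's route is shorter and makes explicit use of the LLV structure; yours is more self-contained and would work even without knowing in advance that the commutator lands in the expected subspace.
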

\begin{proof}
Note first that the morphism is well-defined, as the operator $[e_\sigma,e_\mu]$ has degree 2 for the grading given by $h$ and degree 0 for the grading given by $h'$ and is, therefore, contained in $\h^1(X,\Omega_X^1) \subset \Fg(X)_\BC$. Moreover, this subspace acts faithfully on the fundamental class $\One \in \h^0(X,\BC)$. Thus, we can compute
\[
[e_\sigma,e_\mu](\One) = e_\sigma(e_\mu(\One)) -e_\mu (e_\sigma(\One) )=  -\mu_\lrcorner \sigma \in \h^1(X,\Omega_X^1)
\]
which yields the assertion.
\end{proof}
With these preparations we are now ready to give the promised proof of the main result of this section. 
\begin{proof}[Proof of Theorem~\ref{thm:atomic_Lagrangian_structure}]
	\emph{Step 1. }Let us first show that the conditions in the theorem are sufficient for a connected Lagrangian submanifold to be atomic. 
	
	By Proposition~\ref{prop:atomic_annihilator_codimension} the sheaf $\iota_\ast \CO_L$ is atomic if and only if $\Ann(v(\iota_\ast \CO_L))$ has the right dimension. An element $\omega\in \h^{1,1}(X,\BQ)$ yields an operator $e_\omega \in \Fg(X)$ which can be integrated to the isomorphism $\exp(\omega)$. Moreover, the Lie subalgebras $\Ann(v(\iota_\ast \CO_L))$ and $\Ann(v(\iota_\ast \CO_L)\exp(\omega))$ are adjoint to each other and have, therefore, the same dimension.
	
	By assumption, there exists $\omega\in \h^{1,1}(X,\BQ)$ with the property that $\iota^\ast(\omega) = -\ci_1(L)/2$. From Lemma~\ref{lem:Mukai_vector_Lagrangian} below we infer
	\begin{equation*}
		v(\iota_\ast \CO_L)\exp(\omega) = \iota_\ast [L].
	\end{equation*}
	Using Theorem~\ref{thm:atomic_equivalent_coh_obstrucion} and Remark~\ref{rmk:Theorem_1.2_completely_cohomological} the above discussion shows that $\CE$ is atomic if and only if the map
	\[
	\HT^2(X) \to \HO_2(X), \quad \mu\mapsto \mu \lrcorner \iota_\ast [L]
	\]
	has a one-dimensional image. This follows by assumption employing Proposition~\ref{prop:Lagrangian_comparison_kernels_LLV}. 
	
	\emph{Step 2. }Conversely, let us assume that $\iota_\ast \CO_L$ is atomic. The degree $2n$ component of $v(\iota_\ast \CO_L)$ is equal to $\iota_\ast[L]$. Therefore, as $\iota_\ast \CO_L$ is atomic, the $b_2(X)-1$-dimensional kernel of the cohomological obstruction map $\obs_{\iota_\ast \CO_L}$ is contained in the kernel of 
	\[
	\varphi \colon \HT^2(X) \to \HO_2(X), \quad \mu\mapsto \mu \lrcorner \iota_\ast [L].
	\]
	Note that the kernel $\Ker(\varphi) \subset \HT^2(X)$ of $\varphi$ has codimension at least one, because 
	\[
	\varphi|_{\h^1(X,\CT_X)} \colon \h^1(X,\CT_X) \to \h^{2n}(X,\BC), \quad \mu \mapsto \mu \lrcorner \iota_\ast[L]
	\]
	is non-trivial by Lemma~\ref{lem:Lagrangian_subvar_not_deform_everywhere} below. Using Proposition~\ref{prop:Lagrangian_comparison_kernels_LLV} we see that the image $\mathrm{Im}(\iota^\ast)$ of the pullback morphism is one-dimensional.

	\emph{Step 3. }It remains to show that $\ci_1(L)\in \h^2(L,\BQ)$ is contained in the image of $\iota^\ast$. This uses a variant of the proof of \cite[Prop.\ B.2]{ShenYinTopologyLagrangianFibration}. We first consider the case that $\iota_\ast\Rc_1(L)=0$, which is a guideline for the general case. 

Since $L$ is Lagrangian, the operator $e_\sigma$ acts trivially on $v(\iota_\ast \CO_L)$. Using that $\iota_\ast \CO_L$ is atomic, we know from Theorem~\ref{thm:atomic_equivalent_coh_obstrucion} that there exists $\mu\in \h^1(X,\CT_X)$ such that $\Lambda_\sigma -e_\mu \in \Ker(\obs_{\iota_\ast \CO_L})\subset \HT^2(X)$, where we used again that for a symplectic form $\sigma$ the action of the operator $\Lambda_\sigma$ agrees up to a constant with the action of $\h^0(X,\Lambda^2\CT_X)$. By Lemma~\ref{lem:Mukai_vector_Lagrangian} this yields
\[
\Lambda_\sigma(\iota_\ast \Rc_1(L)^2/8)=e_\mu(\iota_\ast \Rc_1(L)/2.) = \mu \lrcorner \iota_\ast \Rc_1(L)/2 \in \h^{n,n+2}(X).
\]
Since $\Lambda_\sigma$ is injective when restricted to $\h^{n+2,n+2}(X)$ it immediately follows that also $\iota_\ast \Rc_1(L)^2$ vanishes, because we assumed $\iota_\ast \Rc_1(L) =0$. 

Consider now a Kähler class $\omega \in \h^{1,1}(X)$ which restricts to a Kähler class on $L$. The projection formula yields
\[
\iota_\ast (\Rc_1(L) \cdot \iota^\ast \omega^{n-1})= \iota_\ast \Rc_1(L) \cdot \omega^{n-1}=0
\]
which, as $\iota_\ast$ is injective restricted to $\h^{2n}(L,\BC)$, implies that $\Rc_1(L)$ is $\iota^\ast \omega$-primitive. Applying once more the projection formula
\[
\iota_\ast (\Rc_1(L)^2 \cdot \iota^\ast \omega^{n-2})= \iota_\ast \Rc_1(L)^2 \cdot \omega^{n-2}=0
\]
together with the injectivity of $\iota_\ast$ on top degree and the Hodge--Riemann bilinear relations yields that $\Rc_1(L)=0 \in \h^2(X,\BQ)$. 

\emph{Step 4. }Let us now consider the case $\iota_\ast \Rc_1(L)\neq 0$. The degree $2n+2$-component of the Mukai vector $v(\iota_\ast \CO_L)$ of $\iota_\ast \CO_L$ is by Lemma~\ref{lem:Mukai_vector_Lagrangian} equal to $\iota_\ast \ci_1(L)/2$. Since $\iota_\ast \CO_L$ is atomic, by Theorem~\ref{thm:atomic_equivalent_coh_obstrucion} to a given symplectic form $\sigma \in \h^0(X,\Omega_X^2)$ there exists as above $\mu \in \h^1(X,\CT_X)$ such that $e_\mu - \Lambda_\sigma\in \Ker(\obs_{\iota_\ast \CO_L})\subset \HT^2(X)$. This implies 
\[
 e_\mu(\iota_\ast [L]) = \Lambda_\sigma (\iota_\ast \Rc_1(L)/2) \neq 0.
\] 
Applying $e_\sigma$ to this equality and noting once more that this operator has trivial kernel restricted to $\h^{n-1,n+1}(X)$ we obtain the equality
\[
e_\sigma(e_\mu (\iota_\ast [L]))= e_\sigma(\Lambda_\sigma(\iota_\ast\Rc_1(L)/2)).
\]
Since $L$ is Lagrangian, we know $e_\sigma(\iota_\ast\ci_1(L)/2)=e_\sigma(\iota_\ast [L])=0$. 
The above equality can, therefore, be written as
\[
[e_\sigma,e_\mu] (\iota_\ast[L])= [e_\sigma , \Lambda_\sigma](\iota_\ast\Rc_1(L)/2)=h_\sigma(\iota_\ast\Rc_1(L)/2)=\iota_\ast \Rc_1(L)/2.
\]
Lemma~\ref{lem:Lagrangian_identification_operators_LLV} shows that $[e_\sigma,e_\mu]$ is equal to $e_\omega$ for some $\omega\in \h^1(X,\Omega_X^1)$. 

\emph{Step 5. }We claim that we can assume that $\pm \omega$ is a Kähler class. 

Indeed, we have already proven that the image of the restriction morphism
\[
\iota^\ast \colon \h^1(X,\Omega_X^1) \to \h^1(L,\Omega^1_L)
\]
is one-dimensional. Hence, there exists a Kähler class $\tilde{\omega}\in \h^1(X,\Omega_X^1)$ whose image $\iota^\ast\tilde{\omega}$ is a Kähler class and generates $\mathrm{Im}(\iota^\ast)$. Thus, there exists $k\in \BC$ such that $\iota^\ast \omega = k \iota^\ast \tilde{\omega}$ for $\omega$ from above. Moreover, Lemma~\ref{lem:Lagrangian_identification_operators_LLV} shows that there exists $\tilde{\mu}\in \h^1(X,\CT_X)$ such that 
\[
-\tilde{\mu} \lrcorner \sigma= -e_{\tilde{\mu}}(\sigma) = k\tilde{\omega}.
\] 
In particular, using once more Lemma~\ref{lem:Lagrangian_identification_operators_LLV} we obtain
\[
[e_\sigma,e_\mu](\iota_\ast[L])=e_\omega (\iota_\ast[L]) = \omega \wedge \iota_\ast[L] = k\tilde{\omega} \wedge \iota_\ast[L] = [e_\sigma,e_{\tilde{\mu}}](\iota_\ast[L]).
\]
This shows that the element $\mu - \tilde{\mu} \in \h^1(X,\CT_X)$ is contained in the kernel of $\obs_{\iota_\ast \CO_L}$ and all the above arguments remain valid replacing $\mu$ with $\tilde{\mu}$. 

\emph{Step 6. }Summing up the above discussion, we obtain the equality
\begin{align}
\label{eq:1234}
e_\omega(\iota_\ast [L])= \iota_\ast[L] \wedge \omega = \iota_\ast ([L] \wedge \iota^\ast \omega) =  \iota_\ast \Rc_1(L)/2
\end{align} 
for $\omega = -\mu \lrcorner \sigma \in \h^1(X,\Omega_X^1)$ a (possibly negative) multiple of a Kähler class and $\mu \in \h^1(X,\CT_X)$ such that $\Lambda_\sigma - e_\mu \in \Ker(\obs_{\iota_\ast \CO_L}) \subset \HT^2(X)$. 

Repeating this argument with the same $\omega$ and $\mu$ we get again by Lemma~\ref{lem:Mukai_vector_Lagrangian}
\[
\Lambda_\sigma(\iota_\ast \Rc_1(L)^2/8 )=e_\mu( \iota_\ast \Rc_1(L)/2 ).
\] 
As before, applying $e_\sigma$ we deduce
\begin{align}
\label{eq:5678}
\iota_\ast \Rc_1(L)^2/4  = e_\omega(\iota_\ast \Rc_1(L)/2 ). 
\end{align}

One now concludes the proof as in the case $\iota_\ast \ci_1(L)=0$. We sketch the argument. First, $\Rc_1(L)/2 -\iota^\ast \omega$ is $\iota^\ast\omega$-primitive using \eqref{eq:1234}. Moreover 
\[(\Rc_1(L)/2 -\iota^\ast \omega)^2\iota^\ast \omega^{n-2} = (\Rc_1(L)^2/4-\iota^\ast \omega \wedge \Rc_1(L) + \iota^\ast\omega^2)\iota^\ast \omega^{n-2} \] vanishes by employing \eqref{eq:5678}. Invoking the Hodge--Riemann bilinear relations yields $\Rc_1(L)/2=\iota^\ast \omega$. This finishes the proof. 
\end{proof}
It remains to prove the two lemmata used in the above proof. 
\begin{lem}
\label{lem:Mukai_vector_Lagrangian}
	Let $X$ be a smooth symplectic projective manifold and $\iota \colon L\subset X$ a smooth Lagrangian submanifold. Then $v(\iota_\ast \CO_L) = \iota_\ast \exp(\Rc_1(L)/2)$.
\end{lem}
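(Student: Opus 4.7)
The plan is to compute $v(\iota_\ast\CO_L)$ directly via Grothendieck--Riemann--Roch (GRR), using the Lagrangian condition through the symplectic identification of the normal bundle, and then simplifying with a standard Todd class identity.

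First, I would recall that GRR for the closed embedding $\iota$ gives the identity
\[
\ch(\iota_\ast\CO_L)\cdot\td(X)=\iota_\ast(\td(L)).
\]
Multiplying by $\tdd(X)^{-1}$ and applying the projection formula (noting $\tdd(X)^{-1}$ is pulled back from $X$), this becomes
\[
v(\iota_\ast\CO_L)=\ch(\iota_\ast\CO_L)\tdd(X)=\iota_\ast\!\left(\frac{\td(L)}{\iota^{\ast}\tdd(X)}\right).
\]
Using the normal bundle exact sequence $0\to\CT_L\to\iota^{\ast}\CT_X\to N_{L/X}\to 0$, one obtains $\iota^{\ast}\td(X)=\td(L)\cdot\td(N_{L/X})$.

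Next comes the key input from the Lagrangian hypothesis: the symplectic form $\sigma$ yields an isomorphism $N_{L/X}\cong\Omega_L^1$ (contraction with $\sigma$ identifies the conormal bundle with the tangent bundle, hence the normal bundle with $\Omega_L^1$). Consequently $\iota^{\ast}\td(X)=\td(\CT_L)\td(\Omega_L^1)$, and therefore
\[
v(\iota_\ast\CO_L)=\iota_\ast\!\left(\sqrt{\frac{\td(\CT_L)}{\td(\Omega_L^1)}}\right).
\]

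Finally, I would invoke the well-known Chern root identity $\td(\CE)=\td(\CE^\vee)\exp(\Rc_1(\CE))$, applied to $\CE=\CT_L$, which gives
\[
\frac{\td(\CT_L)}{\td(\Omega_L^1)}=\exp(\Rc_1(L)).
\]
Substituting yields $v(\iota_\ast\CO_L)=\iota_\ast\exp(\Rc_1(L)/2)$, as claimed. There is no real obstacle here; the only point that uses geometry (as opposed to formal manipulation of characteristic classes) is the symplectic identification $N_{L/X}\cong\Omega_L^1$, which is standard for Lagrangians in symplectic manifolds.
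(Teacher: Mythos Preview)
Your proof is correct and follows essentially the same route as the paper: both apply Grothendieck--Riemann--Roch, use the symplectic identification $N_{L/X}\cong\Omega_L^1$, and reduce to the identity $\td(\CT_L)/\td(\Omega_L^1)=\exp(\Rc_1(L))$. The only cosmetic difference is that the paper verifies this last identity explicitly via the Chern-root computation $\frac{x}{1-e^{-x}}\cdot\frac{e^x-1}{x}=e^x$, whereas you quote it as a known fact.
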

\begin{proof}
	It is well-known that the normal bundle sequence 
	\[
	0\to \CT_L \to \CT_X|_L \to \CN_{L|X} \to 0
	\]
	combined with the isomorphism $\sigma \colon \CT_X \cong \Omega_X$, the short exact sequence
	\[
	0 \to \CN_{L|X}^{\vee} \to \Omega_X|_L \to \Omega_L \to 0
	\]
	and the fact that $L$ is Lagrangian yield $\CN_{L|X}\cong \Omega_L$. 
	
	Using the Grothendieck--Riemann--Roch theorem we get
	\begin{align*}
		\ch(\iota_\ast(\CO_L)) \td(X) = \iota_\ast(\ch(\CO_L) \td(L)) =\iota_\ast \td(L).
	\end{align*}
Multiplying the above equation by $\td(X)^{-1/2}$ we obtain
\begin{align*}
	v(\iota_\ast \CO_L) = \iota_\ast(\td(L) \cdot \iota^{\ast} \td(X)^{-1/2}).
\end{align*}
The previous paragraph yields
\begin{equation*}
\iota^\ast \td(X) = \td(\CT_X|_L) = \td(L) \cdot \td(\Omega_L).
\end{equation*}
From this we obtain
\begin{equation}
\label{eq:101410}
v(\iota_\ast \CO_L) = \iota_\ast (\td(L) \cdot \td(L)^{-1/2} \cdot \td(\Omega_L)^{-1/2})= \iota_\ast (\td(L)^{1/2} \cdot \td(\Omega_L)^{-1/2}).
\end{equation}

Recall that given the formal Chern roots $e_i$ of a bundle $\CE$ its Todd class is the product
\[
\td(\CE) = \prod_i Q(e_i)
\]
where 
\[
	Q(x)=\frac{x}{1-e^{-x}}.
\]
The assertion is now a consequence from the identity
\[
\frac{x}{1-e^{-x}} \cdot \left( \frac{-x}{1-e^x} \right)^{-1} = \frac{x}{1-e^{-x}} \cdot \frac{e^x-1}{x} = \frac{e^x-1}{1-e^{-x}}=e^x
\]
applied to \eqref{eq:101410}. 
\end{proof}
\begin{lem}
\label{lem:Lagrangian_subvar_not_deform_everywhere}
	Let $X$ be a hyper-Kähler manifold and $\iota \colon L \subset X$ a Lagrangian subvariety. Then the morphism
	\[
	\h^1(X, \CT_X) \to \h^\ast(X,\BC), \quad \mu \mapsto \mu \lrcorner \iota_\ast [L]
	\]
	is non-trivial. 
\end{lem}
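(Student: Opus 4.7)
The plan is to argue by contradiction and convert the contraction statement into a cup product statement, which will then be controlled by Kähler positivity. Suppose $\mu \lrcorner \iota_\ast[L] = 0$ for every $\mu \in \h^1(X, \CT_X)$, and fix a holomorphic symplectic form $\sigma \in \h^0(X, \Omega_X^2)$. Since $L$ is Lagrangian, $e_\sigma(\iota_\ast[L]) = \sigma \wedge \iota_\ast[L] = 0$. Combined with the vanishing hypothesis this gives
\[
[e_\sigma, e_\mu](\iota_\ast[L]) \;=\; e_\sigma(\mu \lrcorner \iota_\ast[L]) - e_\mu(e_\sigma(\iota_\ast[L])) \;=\; 0
\]
for every $\mu \in \h^1(X, \CT_X)$.

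Next I would invoke Lemma~\ref{lem:Lagrangian_identification_operators_LLV}, which says that $[e_\sigma, e_\mu] = e_{-\mu \lrcorner \sigma}$, i.e.\ cupping with the class $-\mu \lrcorner \sigma \in \h^1(X, \Omega_X^1)$. Because $\sigma$ induces an isomorphism $\CT_X \cong \Omega_X^1$ and hence an isomorphism $\h^1(X, \CT_X) \xrightarrow{\lrcorner\sigma} \h^1(X, \Omega_X^1)$, the conclusion of the previous step becomes: $\omega \wedge \iota_\ast[L] = 0$ for every $\omega \in \h^1(X, \Omega_X^1)$.

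To finish, take $\omega$ to be a Kähler class. By the projection formula $\omega \wedge \iota_\ast[L] = \iota_\ast(\iota^\ast \omega)$, and pairing with $\omega^{n-1}$ gives
\[
\int_X \omega^{n-1} \wedge (\omega \wedge \iota_\ast[L]) \;=\; \int_L (\iota^\ast \omega)^n \;>\; 0,
\]
since the restriction of a Kähler class to a compact complex submanifold is Kähler. This contradicts the assumed vanishing. There is no substantial obstacle here: the whole argument is a direct application of Lemma~\ref{lem:Lagrangian_identification_operators_LLV}, whose role is precisely to convert the contraction $\mu \lrcorner(\cdot)$ into a cup product once $e_\sigma$ acts trivially on $\iota_\ast[L]$, after which Kähler positivity takes over.
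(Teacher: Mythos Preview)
Your proof is correct and reaches the same contradiction as the paper, but takes a slightly more direct route. The paper first shows that all of $\HT^2(X)$ annihilates $\iota_\ast[L]$ (using the $\Fs\Fl_2$-triple $(e_\sigma,h_\sigma,\Lambda_\sigma)$ to get $\Lambda_\sigma(\iota_\ast[L])=0$ as well), then invokes the argument from Proposition~\ref{prop:atomic_annihilator_codimension} to conclude that the entire LLV algebra $\Fg(X)_\BC$ annihilates $\iota_\ast[L]$, and only then specializes to $e_\omega$ for a K\"ahler class. You bypass this detour: using Lemma~\ref{lem:Lagrangian_identification_operators_LLV} you convert $[e_\sigma,e_\mu]$ directly into $e_\omega$ for $\omega\in\h^{1,1}(X)$, which is all that is needed for the K\"ahler contradiction. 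Your approach is a bit more economical; the paper's approach emphasizes the LLV-algebra viewpoint consistent with the rest of the text.

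One small wording point: the lemma is stated for a Lagrangian \emph{subvariety}, not necessarily smooth, so the phrase ``restriction of a K\"ahler class to a compact complex submanifold is K\"ahler'' is slightly imprecise. The positivity $\int_X \omega^n\wedge\iota_\ast[L]>0$ holds regardless, as it is simply the K\"ahler volume of the $n$-dimensional subvariety $L$.
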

\begin{proof}
	The assertion can be deduced from results of Voisin \cite[Sec.\ 1]{VoisinLagrangian}. We want to give another proof using the LLV algebra.
	
	By assumption, as $\iota \colon L \subset X$ is Lagrangian, we know that 
	\[
	\sigma \wedge \iota_\ast [L] = 0 = \bar{\sigma} \wedge \iota_\ast [L] \in \h^\ast(X,\BC)
	\]
	for $\sigma, \bar{\sigma}$ the (anti-)holomorphic two-form. Using again \eqref{eq:abcefr} we see that $\Lambda_\sigma(\iota_\ast [L])=0$. Hence, assuming 
	\[
	\h^1(X, \CT_X) \to \h^\ast(X,\BC), \quad \mu \mapsto \mu \lrcorner \iota_\ast [L]
	\]
	to be trivial implies that
	\[
	\HT^2(X) \to \h^\ast(X,\BC), \quad \mu \mapsto \mu \lrcorner \iota_\ast [L]
	\]
	is also trivial. As demonstrated in the proof of Proposition~\ref{prop:atomic_annihilator_codimension} this would imply that $\iota_\ast [L]$ is annihilated by the LLV algebra. We obtain a contradiction, as there exists a Kähler class $\omega \in \h^2(X,\BC)$ which restricts non-trivially to $L$ and, therefore, $e_\omega(\iota_\ast [L]) \neq 0$. 
\end{proof}
The statement of the lemma can be interpreted by saying that no Lagrangian subvariety can be deformed (cohomologically) along with to all Kähler deformations of $X$. 
\subsection{1-Obstructedness}
\label{subsec_atomic_Lagrangian_1-Obstructedness}
Atomic Lagrangians $\iota \colon L \subset X$ and the sheaves $\iota_\ast \CL$ for $\CL \in \Pic^0(L)$ are a good testing ground for Conjecture~\ref{conj:serre_duality_image_obstruction_map}. By Corollary~\ref{cor:atomic_simple:1-obstructed_iff_Conjecture} it is equivalent to study whether these sheaves are 1-obstructed. In this section, we discuss the obstruction map for atomic Lagrangians. See also \cite[Sec.\ 3.1]{MarkmanObs} for a related discussion. 

Recall that by adjunction the group $\Ext^2(\iota_\ast \CO_L, \iota_\ast \CO_L)$ decomposes into
\[
\Ext^2(\iota_\ast \CO_L, \iota_\ast \CO_L) \cong \h^2(L,\CO_L) \oplus \h^1(L,\Omega_L^1) \oplus \h^0(L,\Omega_L^2).
\]
Similarly, the degree two polyvector fields $\HT^2(X)$ decompose by definition as
\[
\HT^2(X) = \h^2(X,\CO_X) \oplus \h^1(X,\CT_X) \oplus \h^0(X, \Lambda^2 \CT_X).
\]
Using these decompositions we want to refine the study of the obstruction map
\[
\_ \lrcorner \left( \At_{\iota_\ast \CO_L}^0 + \At_{\iota_\ast \CO_L} + \At_{\iota_\ast \CO_L}^2/2 \right) \colon \HT^2(X) \to \Ext^2(\iota_\ast \CO_L, \iota_\ast \CO_L).
\]
The fact that $L$ is Lagrangian implies immediately that $\At_{\iota_\ast \CO_L}^0 \lrcorner \bar{\sigma}$ vanishes for $\bar{\sigma} \in \h^2(X,\CO_X)$. The induced map
\[
\h^1(X,\CT_X) \to \h^1(L, \Omega_L^1)
\]
is induced by the morphism $\CT_X \to \CN_{L|X}$ together with the isomorphism $\CN_{L|X}\cong \Omega_L^1$. Under the isomorphism $\Omega_X^1 \cong \CT_X$ the composition
\[
\h^1(X,\Omega_X^1) \to \h^1(L,\Omega_L^1)
\]
agrees (up to a constant) with the pullback map on cohomology. 

The most difficult piece is to study the induced map
\[
\psi \colon \h^0(X,\Lambda^2\CT_X) \to \h^2(L,\CO_L) \oplus \h^1(L,\Omega_L^1) \oplus \h^0(L,\Omega_L^2). 
\]
The morphism $\h^0(X,\Lambda^2\CT_X) \to \h^0(L,\Omega_L^2)$ is again zero due to $L$ being Lagrangian. However, the map $\psi$ is not equal to the projection to this component.

Indeed, Lemma~\ref{lem:Mukai_vector_Lagrangian} and Theorem~\ref{thm:atomic_Lagrangian_structure} show that as soon as $\ci_1(\omega_L) \in \h^2(X,\BQ)$ is non-trivial, then the degree $4n$ component of $v(\iota_\ast \CO_L)$ is non-trivial. In particular, the operator $\Lambda_\sigma$, whose action agrees with $\h^0(X,\Lambda^2\CT_X)$ up to multiples, acts non-trivially on $v(\iota_\ast \CO_L)$. Lemma~\ref{lem:obstructed_implies_cohomologically_obstructed} then shows that $\psi$ must also be non-zero. 

From the proof of Theorem~\ref{thm:atomic_Lagrangian_structure} we deduce that the image of the morphism $\psi$ projected onto the component $\h^1(L,\Omega_L^1)$ should be a multiple of $\ci_1(L)$. This then would prove that the atomic sheaf $\iota_\ast \CO_L$ is indeed 1-obstructed and, by Corollary~\ref{cor:atomic_simple:1-obstructed_iff_Conjecture}, would satisfy Conjecture~\ref{conj:serre_duality_image_obstruction_map}. 

Note that in \cite[Rem.\ 3.10]{MarkmanObs} it is speculated that the map $\psi$ is the zero morphism for the atomic sheaf $\iota_\ast \omega_L^{1/2}$. From Lemma~\ref{lem:Mukai_vector_Lagrangian} we conclude that the Mukai vector of $\iota_\ast \omega_L^{1/2}$ is just $\iota_\ast [L]\in \h^{2n}(X,\BQ)$. In particular, the cohomological obstruction map $\obs_{\iota_\ast \omega_L^{1/2}}$ vanishes when restricted to $\h^0(X,\Lambda^2\CT_X)$. This shows that $\psi$ is zero if and only if $\iota_\ast \omega_L^{1/2}$ satisfies Conjecture~\ref{conj:serre_duality_image_obstruction_map}. This seems to be suggested from \cite{DAgnoloSchapira} as discussed in \cite[Rem.\ 3.10]{MarkmanObs}. 
\subsection{Graded Commutativity}
\label{subsec:atomic_Lagrangian_Graded_Commutativity}
The results from \cite{MladenovDegeneration} imply that for an atomic Lagrangian $\iota \colon L \subset X$ we have a graded multiplicative isomorphism
\[
\Ext^\ast(\iota_\ast \CO_L, \iota_\ast \CO_L) \cong \h^\ast(L,\BC). 
\]
In particular, for all line bundles $\CL \in \Pic(X)$ the above isomorphism remains valid for the atomic sheaf $\iota_\ast \iota^\ast \CL$. This leads to the following immediate consequence.
\begin{prop}
\label{prop:atomic_Lagrangian_Ext_Graded_Commutative}
	Let $\iota \colon L \subset X$ be an atomic Lagrangian and $\CL \in \Pic(X)$. The algebra structure of $\Ext^\ast(\iota_\ast \iota^\ast\CL,\iota_\ast \iota^\ast\CL)$ is graded-commutative. If $X$ is of dimension at most four, then for all $\CM \in \Pic(L)$ the algebra $\Ext^\ast(\iota_\ast\CM,\iota_\ast\CM)$ is graded-commutative.
\end{prop}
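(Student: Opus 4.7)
The approach is to reduce both assertions to the topological identification of the Ext algebra
\[
\Ext^\ast(\iota_\ast\CO_L,\iota_\ast\CO_L)\cong\h^\ast(L,\BC)
\]
recalled immediately before the proposition (Mladenov's degeneration result, applied to the atomic Lagrangian $L$), using that the right-hand side is graded-commutative under cup product.

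For the first assertion, I would observe that $\iota_\ast\iota^\ast\CL\cong \iota_\ast\CO_L\otimes\CL$ by the projection formula, and that tensoring by $\CL\in\Pic(X)$ is an autoequivalence of $\Db(X)$. Such an autoequivalence induces a canonical ring isomorphism
\[
\Ext^\ast(\iota_\ast\iota^\ast\CL,\iota_\ast\iota^\ast\CL)\cong\Ext^\ast(\iota_\ast\CO_L,\iota_\ast\CO_L),
\]
so graded-commutativity of the right-hand side transfers directly to the left.

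For the second assertion, assume $\dim X\le 4$, whence $\dim L\le 2$ and the Ext algebra is concentrated in degrees $0,\ldots,4$. Using adjunction together with the Koszul resolution of $\iota_\ast\CO_L$ and the Lagrangian identification $\CN_{L/X}\cong\Omega_L^1$, a standard computation gives
\[
\Ext^k(\iota_\ast\CM,\iota_\ast\CM)\cong \bigoplus_{p+q=k}\h^p(L,\Omega_L^q)\cong \h^k(L,\BC)
\]
as graded vector spaces, independently of $\CM\in\Pic(L)$ (the twist by $\CM\otimes\CM^\vee$ cancels), where the final identification is by Hodge--de Rham degeneration on the surface $L$. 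I would then invoke Mladenov's theorem to upgrade this vector-space identification to a ring isomorphism; the point is that in the low-dimensional regime $\dim L\le 2$ the corresponding local-to-global spectral sequence is short enough that the multiplicative degeneration argument carries through uniformly for every line bundle $\CM\in \Pic(L)$.

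The main obstacle is the second assertion: a general $\CM\in\Pic(L)$ need not extend to any line bundle on $X$, and indeed $\iota_\ast\CM$ need not itself be atomic, as its Mukai vector $\iota_\ast\exp(\ci_1(\CM)+\ci_1(L)/2)$ would require $\ci_1(\CM)\in \mathrm{Im}(\iota^\ast)$ by Theorem~\ref{thm:atomic_Lagrangian_structure}. Hence the derived-equivalence reduction used in the first part is unavailable, and one genuinely needs the refinement of Mladenov's degeneration theorem to arbitrary line bundles on $L$, which is only known (or only transparent) once $\dim L\le 2$ collapses the relevant higher $A_\infty$-corrections and makes formality automatic.
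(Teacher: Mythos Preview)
Your argument for the first assertion is correct and matches the paper's: the multiplicative isomorphism $\Ext^\ast(\iota_\ast\CO_L,\iota_\ast\CO_L)\cong\h^\ast(L,\BC)$ from Mladenov's degeneration result, combined with the projection formula and the fact that tensoring by $\CL\in\Pic(X)$ is an autoequivalence, transports graded-commutativity to $\iota_\ast\iota^\ast\CL$.

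For the second assertion your intuition is right but the argument as written is too vague to count as a proof, and the invocation of formality and $A_\infty$-corrections is a red herring. The paper does not argue via formality; it invokes \cite[Thm.\ 0.1.1]{MladenovDegeneration} directly, which controls the multiplicative degeneration of the local-to-global Ext spectral sequence for $\iota_\ast\CM$ in terms of a specific differential
\[
d_2^{1,1}\colon \h^1(L,\Omega_L^1)\longrightarrow \h^3(L,\CO_L).
\]
When $\dim X\le 4$, so $\dim L\le 2$, the target $\h^3(L,\CO_L)$ vanishes, hence $d_2^{1,1}=0$, and Mladenov's theorem then yields the multiplicative isomorphism $\Ext^\ast(\iota_\ast\CM,\iota_\ast\CM)\cong\h^\ast(L,\BC)$ for every $\CM\in\Pic(L)$. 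Your phrase ``the spectral sequence is short enough'' gestures at this, but the actual content is the vanishing of this single differential by a dimension count on its target, not a general collapse of higher homotopies. Formality of $\mathrm{R}\HomS(\iota_\ast\CM,\iota_\ast\CM)$ is neither used nor implied here; it is a separate and stronger statement treated elsewhere in the paper.
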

\begin{proof}
	The first part follows from the above discussion. For the second part we employ \cite[Thm.\ 0.1.1]{MladenovDegeneration} and the vanishing of $\h^3(L,\CO_L)$ which implies that in the situation of \emph{loc.\ cit.\ }
	\[
	d_2^{1,1} \colon \h^1(L,\Omega_L^1) \to \h^3(L,\CO_L)
	\]
	is the zero map.
\end{proof}
We have stated Conjecture~\ref{conj:Skew-symmetry} only for vector bundles. The proposition shows that (a stronger form of) its conclusion holds true for line bundles supported on atomic Lagrangians. 

Moreover, we see the above as evidence for Conjecture~\ref{conj:Skew-symmetry}. Let us elaborate how one might be able to prove the conjecture employing the above in the case of K3 surfaces. 
\begin{prop}
\label{prop:Proof_Conj_Skew_Symmetry_k3_hyperbolic}
	Let $S$ be a K3 surface with a hyperbolic plane $U \subset \Pic(S)$ and $[\CE]\in M_H(v)$ a generic point of a smooth moduli space corresponding to an $H$-slope stable bundle. Then there exists a smooth curve $C\subset S$, a line bundle $\CL \in \Pic(C)$ and a derived equivalence $\Phi \in \Aut(\Db(S))$ such that $\Phi(\CE) \cong \iota_\ast \CL$.
\end{prop}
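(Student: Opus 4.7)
The plan is, via the realization theorem of Hosono--Lian--Oguiso--Yau, to reduce the existence of $\Phi$ to a lattice-theoretic claim and then identify $\Phi(\CE)$ by genericity.

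First, I would prove that there is an orientation-preserving Hodge isometry $g$ of $\tH(S,\BZ)$ carrying $v(\CE)$ to a vector $(0,[C],d)$ with $[C]\in\Pic(S)$ primitive, effective, and representable by a smooth irreducible curve. The algebraic Mukai lattice $\tH^{1,1}(S,\BZ)=\BZ\alpha\oplus\Pic(S)\oplus\BZ\beta$ contains the tautological hyperbolic plane $\langle\alpha,\beta\rangle$ together with $U\subset\Pic(S)$, hence contains $U^{\oplus 2}$. Since $v(\CE)$ is primitive (being the Mukai vector of a stable sheaf at a generic smooth point of $M_H(v)$), Eichler's transitivity theorem for even indefinite lattices with two orthogonal hyperbolic planes produces $g_0\in O(\tH^{1,1}(S,\BZ))$ sending $v(\CE)$ to a primitive vector $(0,c_0,d_0)$, the requisite discriminant class being hit using the freedom afforded by $U$. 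Composing with Weyl reflections in $(-2)$-classes and with line bundle twists, I adjust $c_0$ so that it is effective and $|c_0|$ contains a smooth irreducible member, which holds by Saint-Donat whenever $c_0^2\geq 0$ (with the spherical case $c_0^2=-2$ yielding a smooth rational curve directly). I then extend $g_0$ to a Hodge isometry $g$ of all of $\tH(S,\BZ)$ using Nikulin's gluing of discriminant-form actions, where the flexibility of $U^{\oplus 2}$ again ensures this can be done, and compose with $-\mathrm{id}$ if needed to obtain orientation-preservation.

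Second, by Hosono--Lian--Oguiso--Yau (refining Orlov's derived Torelli), $g$ is realized as the cohomological action $\Phi^H$ of some $\Phi\in\Aut(\Db(S))$, so that $v(\Phi(\CE))=(0,[C],d)$ with $[C]:=c_0$. For generic $[\CE]\in M_H(v)$, I argue $\Phi(\CE)\cong\iota_\ast\CL$: the autoequivalence $\Phi$ sends $\sigma_H$-stable objects to $\Phi\cdot\sigma_H$-stable ones, and a standard wall-crossing argument in $\Stab(S)$ shows that along a path from $\Phi\cdot\sigma_H$ to an ordinary Gieseker stability condition the generic object stays stable, so $\Phi(\CE)$ equals, up to a uniform shift, a Gieseker-stable sheaf $\CF$ with $v(\CF)=(0,[C],d)$. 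Such a $\CF$ is pure of dimension one; the Le Potier support morphism $M(0,[C],d)\to|[C]|$ has generic fibre equal to $\Pic^{d'}(C)$ for smooth $C\in|[C]|$ of the appropriate degree $d'$, so generic points parametrize line bundles on smooth irreducible curves, proving $\Phi(\CE)\cong\iota_\ast\CL$.

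The main obstacle is twofold: (i) extending $g_0$ from an isometry of $\tH^{1,1}(S,\BZ)$ to a Hodge isometry of $\tH(S,\BZ)$, for which the hypothesis $U\subset\Pic(S)$ is indispensable, as it provides the flexibility needed to match the $g_0$-action on the discriminant form with some isometry of the transcendental lattice $T_S$; and (ii) ensuring that $\Phi(\CE)$ lands in the heart of the standard $t$-structure rather than as a genuine two-term complex. Both are overcome by the abundance of autoequivalences at our disposal---tensor products, spherical twists, and the relative Poincaré transform coming from the genus-one fibration determined by the isotropic classes in $U$---which together realize enough of $O^+(\tH(S,\BZ))$ to execute the lattice construction above while simultaneously controlling $\Phi(\CE)$ generically.
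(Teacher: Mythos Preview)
Your overall strategy matches the paper's: use Eichler-type transitivity on the algebraic Mukai lattice (exploiting $U^{\oplus 2}$) to move $v(\CE)$ to a vector of the form $(0,[C],d)$, invoke derived Torelli to realise the isometry by an autoequivalence, and then argue that a generic point of the resulting Bridgeland moduli space is a line bundle on a smooth curve. The paper cites \cite[Prop.~3.3]{GHS_Pi1} and Huybrechts--Stellari where you invoke Eichler plus Nikulin gluing and Hosono--Lian--Oguiso--Yau; these are equivalent inputs, though your extra extension step via discriminant forms is unnecessary once one observes that the stable orthogonal group already acts transitively on the relevant primitive vectors.

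The substantive gap is in your second step. After applying $\Phi$ you land in $M_\sigma(0,[C],d)$ for some Bridgeland stability condition $\sigma=\Phi\cdot\sigma_H$, and you claim a ``standard wall-crossing argument'' shows the generic object stays stable along a path to the Gieseker chamber. This is not standard and is in fact the heart of the matter: crossing a wall can destabilise every object, and the birationality of Bridgeland moduli spaces under wall-crossing is precisely the content of the Bayer--Macr\`i MMP results. The paper handles this by \emph{composing with a second autoequivalence} $\Psi$: in the spherical case $v^2=-2$ one uses \cite[Prop.~6.8]{BayMacMMP} to find $\Psi$ acting trivially on cohomology and carrying $\sigma$ into the Gieseker chamber; in the case $v^2\geq 0$ one uses \cite[Thm.~1.1]{BayMacMMP} to obtain $\Psi$ inducing a birational map $M_\sigma(0,[C],0)\dashrightarrow M_H(0,[C],0)$, so that a generic point is sent to a generic Gieseker-stable sheaf. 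Your argument needs this input explicitly; the hand-wave about ``abundance of autoequivalences'' in your last paragraph does not substitute for it. A minor point: composing with $-\id$ negates your target vector (so $[C]$ becomes anti-effective), and in any case the orientation condition is already handled by working inside the stable orthogonal group from the start.
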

\begin{proof}
	The assumption on the Picard group of $S$ implies that there exists an isometry of $\tH(S,\BZ)$ with real spinor norm one sending $v = v(\CE)$ to the class $(0,[C],0)$ for $C \subset S$ a smooth connected curve. 
	
	Indeed, we can write 
	\[
	\tH(S,\BZ)_{\mathrm{alg}}= U \oplus U \oplus L_0
	\]
	where the first hyperbolic plane is spanned by $\alpha=\One$ and $\beta=\pt$. Using \cite[Prop. 3.3]{GHS_Pi1} we can modify the part of $v$ which lies in the first two hyperbolic planes as desired to have no contribution from the classes $\alpha$ and $\beta$.
	
	From \cite{HuybrechtsStellariTwisted} we know that there exists an auto-equivalence $\Phi \in \Aut(\Db(S))$ such that the induced action on cohomology agrees with the above isometry. This yields the isomorphism
	\[
	\Phi \colon M_H(v) \cong M_\sigma(0,[C],0)
	\]
	for some stability condition $\sigma \in \Stab^\dagger(S)$. 
	
	We consider now two cases. If $v^2=-2$, where we use the usual convention on K3 surfaces that we multiply the generalized Mukai pairing with $-1$, then $M_H(v)=[\CE]$ for the spherical bundle $\CE$. We apply \cite[Prop.\ 6.8]{BayMacMMP} as explained in \cite[Rem.\ 6.10]{BayerBridgeland} to obtain a derived equivalence $\Psi$ acting trivially on cohomology and sending $\sigma$ into the Gieseker chamber. The composition therefore satisfies
	\[
	\Psi \circ \Phi(\CE) \cong \CO_C(-1)
	\]
	for the smooth rational curve $C$. 
	
	If $v^2\geq 0$ we can employ \cite[Thm.\ 1.1]{BayMacMMP} to find an equivalence $\Psi$ sending $\sigma$ into the Gieseker chamber such that the composition $\Psi \circ \Phi$ induces a birational map between $M_H(v)$ and $M_H(0,[C],0)$. In particular, for a generic stable bundle $[\CE] \in M_H(v)$ the composition $\Psi \circ \Phi$ sends $[\CE]$ to a generic stable sheaf in $M_H(0,[C],0)$, which is a line bundle supported on a curve with class $[C]$. 
\end{proof}
The algebra structure of the Yoneda Ext algebra is invariant under derived equivalences. Using Proposition~\ref{prop:atomic_Lagrangian_Ext_Graded_Commutative} we get the multiplicative isomorphism
\[
\Ext^\ast(\CE,\CE) \cong \Ext^\ast(\iota_\ast \CL,\iota_\ast \CL) \cong \h^\ast(C,\BC).
\]
This gives another argument for the (well-known) fact that $\Ext^\ast(\CE,\CE)$ is graded-commutative. In particular, this reproves Conjecture~\ref{conj:Skew-symmetry} for the bundle $\CE$. Note that if we start with a stable bundle $\CE$ on an arbitrary projective K3 surface, we can always deform the surface together with $\CE$ via twistor lines such that a hyperbolic plane is contained in its Picard group.

We expect that a similar approach could be pursued for higher-dimensional hyper-Kähler manifolds. A promising candidate would be the case of the Hilbert scheme of $n$ points $S^{[n]}$ of a K3 surface using the results of \cite{BeckmannExtendedIntegral}. 

Here is how this could be pursued. Using twistor lines and \cite[Prop.\ 6.3]{VerbitskySheavesGeneralK3Tori} one can deform a stable atomic bundle $\CE$ on $S^{[n]}$ to a bundle $\CE'$ on $S'^{[n]}$ such that $U \subset \Pic(S')$ without modifying the Ext algebra structure. Employing \cite[Prop.\ 9.8]{BeckmannExtendedIntegral} we find a derived equivalence $\Phi$ mapping the Mukai vector $\tv(\CE') = \rk(\CE') + \Rc_1(\CE') + s\beta$ of $\CE'$ in the Mukai lattice $\tH(X,\BQ)$ to one of the form $0\alpha + \lambda + k\beta$ for $\lambda \in \h^2(X,\BQ)$ the dual of a smooth curve $C \subset S'^{[n]}$ and some $k\in \BQ$. However, the image $\Phi(\CE')$ might be a priori an arbitrary complex. In the case of K3 surfaces, a solid knowledge of the stability manifold was employed to conclude. In higher-dimensions, a further study of the equivalences involved to construct $\Phi$ via \cite[Prop.\ 9.8]{BeckmannExtendedIntegral} could potentially shed more light on the situation. 
\subsection{Formality}
\label{subsec:Atomic_Lagrangian_Formality}
We want to finish this section by discussing formality for atomic Lagrangians. 

Employing \cite[Thm.\ 0.1.2]{MladenovFormality} and \cite[Prop.\ 1.4]{BudurZhangFormality} we get the following result.
\begin{prop}
	Let $\iota \colon L \subset X$ be an atomic Lagrangian and $\CL \in \Pic(X)$. Assume that $\omega_L$ admits a square root. Then $\mathrm{R}\mathscr{Hom}(\iota_\ast(\omega_L^{1/2} \otimes \iota^\ast \CL), \iota_\ast(\omega_L^{1/2} \otimes \iota^\ast \CL))$ is formal.
\end{prop}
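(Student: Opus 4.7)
The plan is to reduce, via an autoequivalence of $\Db(X)$, to the case $\CL = \CO_X$, and then to invoke Mladenov's formality criterion using atomicity of $L$ as the crucial input, with Budur--Zhang handling the passage from the cohomological statement to genuine dg formality.

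First I would apply the projection formula to rewrite
\[
\iota_\ast(\omega_L^{1/2} \otimes \iota^\ast \CL) \cong \iota_\ast(\omega_L^{1/2}) \otimes \CL
\]
in $\Db(X)$. Tensoring with the line bundle $\CL$ is an autoequivalence of $\Db(X)$ and therefore induces a quasi-isomorphism of the dg algebras of derived endomorphisms; formality is preserved under such quasi-isomorphisms. Hence it suffices to prove formality for the single sheaf $\CF \coloneqq \iota_\ast \omega_L^{1/2}$.

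The next step is to verify the hypotheses of Mladenov's theorem for $\CF$. By Lemma~\ref{lem:Mukai_vector_Lagrangian}, the square-root normalisation makes the Mukai vector maximally rigid: $v(\CF) = \iota_\ast [L] \in \h^{2n}(X,\BQ)$ is purely of top degree. In particular the cohomological obstruction map $\obs_\CF$, restricted to $\h^0(X,\Lambda^2 \CT_X)$, vanishes automatically because $L$ is Lagrangian and the relevant contraction lands in strictly higher degree than $v(\CF)$ allows. Combined with atomicity of $L$, Theorem~\ref{thm:atomic_equivalent_coh_obstrucion} then pins down the one-dimensional image of $\obs_\CF$. This rigidification is exactly the cohomological input required by \cite[Thm.\ 0.1.2]{MladenovFormality}, whose application produces a Hodge-theoretic degeneration statement for $\mathrm{R}\mathscr{Hom}(\CF,\CF)$.

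Finally I would upgrade this degeneration to genuine formality of the dg algebra $\mathrm{R}\mathscr{Hom}(\CF,\CF)$ by invoking \cite[Prop.\ 1.4]{BudurZhangFormality}, which transports Mladenov's output into an honest quasi-isomorphism between $\mathrm{R}\mathscr{Hom}(\CF,\CF)$ and its cohomology algebra. The main obstacle, and precisely the reason the square-root twist $\omega_L^{1/2}$ is essential, is controlling the map $\psi$ of Section~\ref{subsec_atomic_Lagrangian_1-Obstructedness}: without the square-root normalisation the top-degree shape of $v(\CF)$ fails, and one cannot force the vanishing of the Poisson component of the obstruction that Mladenov's degeneration argument requires. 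The remaining subtlety is to check that the choice of spin structure on $L$ is immaterial once $\omega_L^{1/2}$ exists, which is automatic since any two such choices differ by a $2$-torsion line bundle on $L$ and hence yield isomorphic dg endomorphism algebras.
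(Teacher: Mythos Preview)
Your overall strategy matches the paper's: the paper's entire argument is the one-line remark ``Employing \cite[Thm.\ 0.1.2]{MladenovFormality} and \cite[Prop.\ 1.4]{BudurZhangFormality} we get the following result.'' Your reduction to $\CL=\CO_X$ via the projection formula and tensoring by a line bundle is a correct and reasonable first step that the paper leaves implicit.

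However, your elaboration of \emph{why} Mladenov's theorem applies is off. You write that atomicity ``pins down the one-dimensional image of $\obs_\CF$'' and that ``this rigidification is exactly the cohomological input required by \cite[Thm.\ 0.1.2]{MladenovFormality}.'' This is not what Mladenov's hypothesis is. His formality theorem for $\iota_\ast\omega_L^{1/2}$ is a statement about arbitrary smooth Lagrangians in a holomorphic symplectic manifold admitting a spin structure; it proceeds via the structure of the derived self-intersection $\iota^\ast\iota_\ast$ and Koszul/HKR-type identifications, and makes no reference to the cohomological obstruction map or to atomicity. The atomicity hypothesis in the proposition is contextual (the section is about atomic Lagrangians) rather than an input to the proof. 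Likewise, your explanation of why the twist by $\omega_L^{1/2}$ is ``essential'' --- namely to force vanishing of the Poisson component of $\obs_\CF$ --- is not the reason. The spin twist is what makes Mladenov's internal argument run (it is the natural self-dual object for which the relevant filtration splits), independently of anything about $\Ann(v(\CF))$ or the map $\psi$ of Section~\ref{subsec_atomic_Lagrangian_1-Obstructedness}.

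So: the skeleton of your proof is correct and coincides with the paper's, but the narrative you build around atomicity and the obstruction map should be removed, as it misrepresents the mechanism of Mladenov's result.
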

Note that for a Lagrangian projective space $\BP^n\subset X$ we know that by \cite[Thm.\ A]{HocheneggerKrugFormality} $\mathrm{R}\mathscr{Hom}(\iota_\ast \CL, \iota_\ast \CL)$ is formal for all line bundles $\CL \in \Pic(\BP^n)$. See Section~\ref{sec:further_properties_and_examples} for further cases of line bundles on atomic Lagrangian whose associated derived endomorphism dg algebra is formal. 
\section{Examples and further properties}
\label{sec:further_properties_and_examples}
In this section, we discuss some example and further properties that are shared by atomic sheaves and complexes. 
\subsection{Examples of atomic objects}
We will study some examples of atomic objects together with their properties. Recall that by Proposition~\ref{prop:atomic_stable_under_defo_derived_equivalence} being atomic is stable under derived equivalences as well as deformations. Therefore, every example produces via these two operations many more examples. 
\subsubsection{$\BP^n$-objects}
For the definition and properties of $\BP^n$-objects, see \cite{HuybrechtsThomasP}. 

From Theorem~\ref{thm:atomic_equivalent_coh_obstrucion} and Theorem~\ref{thm:1obstructed_implies_atomic} we deduce.
\begin{prop}
	If $\CE\in \Db(X)$ is a $\BP^n$-object, then $\CE$ is atomic except if $v(\CE)$ is annihilated by the LLV algebra.
\end{prop}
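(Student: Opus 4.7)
The plan is to deduce this directly from Theorem~\ref{thm:1obstructed_implies_atomic}. By definition, a $\BP^n$-object $\CE\in \Db(X)$ satisfies $\Ext^\ast(\CE,\CE) \cong \h^\ast(\BP^n,\BC)$ as a graded ring, so in particular $\Ext^2(\CE,\CE)$ is one-dimensional. Consequently, the obstruction map
\[
\chi_\CE \colon \HH^2(X) \to \Ext^2(\CE,\CE)
\]
automatically has image of dimension at most one. The only thing that needs to be verified is that the image is exactly one-dimensional (i.e. $\CE$ is genuinely 1-obstructed and not 0-obstructed) under the assumption that $v(\CE)$ is not annihilated by the LLV algebra $\Fg(X)$.

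Suppose for contradiction that $\chi_\CE$ is identically zero. Then by Lemma~\ref{lem:obstructed_implies_cohomologically_obstructed} we have the inclusion
\[
\HT^2(X) = \IK(\HH^2(X)) = \IK(\Ker(\chi_\CE)) \subset \Ker(\obs_\CE),
\]
so the cohomological obstruction map $\obs_\CE$ vanishes identically. However, the argument in the first part of the proof of Proposition~\ref{prop:atomic_annihilator_codimension} (the computation surrounding \eqref{eq:28714}) shows that if $\obs_\CE$ vanishes identically, then for every $\mu \in \HT^2(X)$ with the Hard Lefschetz property both $e_\mu$ and $\Lambda_\mu$ lie in $\Ann(v(\CE))_\BC$; by Theorem~\ref{thm:Taelman_Verbitsky_Equality_Lie_Algebras} the corresponding $\Fs\Fl_2$-triples generate all of $\Fg(X)_\BC$, so $v(\CE)$ is annihilated by the full LLV algebra, contradicting the hypothesis.

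Hence $\chi_\CE$ has exactly one-dimensional image, i.e. $\CE$ is $1$-obstructed. Since $v(\CE)$ is by assumption not annihilated by $\Fg(X)$, Theorem~\ref{thm:1obstructed_implies_atomic} applies and yields that $\CE$ is atomic. The main (and essentially only) step is the contrapositive reduction of the vanishing of $\chi_\CE$ to the vanishing of $\obs_\CE$ via Lemma~\ref{lem:obstructed_implies_cohomologically_obstructed}; once this is in place the result is immediate from the machinery already developed.
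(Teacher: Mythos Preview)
Your proof is correct and follows essentially the same route as the paper, which simply cites Theorem~\ref{thm:atomic_equivalent_coh_obstrucion} and Theorem~\ref{thm:1obstructed_implies_atomic}. You have carefully spelled out the only nontrivial point the paper's one-line proof leaves implicit, namely that $\chi_\CE$ cannot vanish identically under the hypothesis; your reduction via Lemma~\ref{lem:obstructed_implies_cohomologically_obstructed} and the argument around \eqref{eq:28714} is exactly the mechanism underlying the radical lemma used in the proof of Theorem~\ref{thm:1obstructed_implies_atomic}.
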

Again, if Conjecture~\ref{conj:serre_duality_image_obstruction_map} holds, the above implication that $\BP^n$-objects $\CE$ are atomic holds unconditionally and their Mukai vectors $v(\CE)$ cannot be annihilated by $\Fg(X)$. 

Moreover, $\BP^n$-objects $\CE$ are simple by definition and the associated derived endomorphism dg algebra $\mathrm{R}\mathscr{Hom}(\CE,\CE)$ is formal as shown in \cite[Thm.\ A]{HocheneggerKrugFormality}. Moreover, they give further evidence for Conjecture~\ref{conj:serre_duality_image_obstruction_map}.
\begin{cor}
	Let $\CE$ be an atomic $\BP^n$-object. Then $\CE$ is 1-obstructed and satisfies the conclusion of Conjecture~\ref{conj:serre_duality_image_obstruction_map}. 
\end{cor}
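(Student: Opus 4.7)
The plan is to combine three ingredients already established in the paper: (i) Theorem~\ref{thm:atomic_equivalent_coh_obstrucion}, which says that atomicity is equivalent to the cohomological obstruction map $\obs_\CE$ having one-dimensional image; (ii) Lemma~\ref{lem:obstructed_implies_cohomologically_obstructed}, which gives the inclusion $\IK(\Ker(\chi_\CE)) \subset \Ker(\obs_\CE)$; and (iii) the fact that for a $\BP^n$-object $\CE$ we have an isomorphism $\Ext^\ast(\CE,\CE) \cong \h^\ast(\BP^n,\BC)$ of graded rings, so in particular $\dim_\BC \Ext^2(\CE,\CE) = 1$.

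The first step is to combine (i) and (ii) to squeeze the image of $\chi_\CE$ from below. Since $\IK$ is an isomorphism, Lemma~\ref{lem:obstructed_implies_cohomologically_obstructed} yields the inequality
\[
\dim \Ker(\chi_\CE) \leq \dim \Ker(\obs_\CE),
\]
and hence
\[
\dim \mathrm{Im}(\chi_\CE) \geq \dim \HH^2(X) - \dim \Ker(\obs_\CE) = \dim \mathrm{Im}(\obs_\CE) = 1,
\]
where the last equality is Theorem~\ref{thm:atomic_equivalent_coh_obstrucion} applied to the atomic object $\CE$.

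The second step is to squeeze from above using (iii): since $\mathrm{Im}(\chi_\CE) \subset \Ext^2(\CE,\CE) \cong \BC$, we must in fact have $\dim \mathrm{Im}(\chi_\CE) = 1$. This is precisely the statement that $\CE$ is 1-obstructed. Finally, because a $\BP^n$-object is simple by definition and atomic by hypothesis, Corollary~\ref{cor:atomic_simple:1-obstructed_iff_Conjecture} applies and gives that $\CE$ satisfies the conclusion of Conjecture~\ref{conj:serre_duality_image_obstruction_map}.

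There is no real obstacle here; the argument is essentially a dimension count, and all the substantial work has already been carried out in Sections~\ref{subsec:atomic_comparison} and~\ref{subsec:comparison_atomic_obstruction_map}. The only mild point to verify is that $\dim \Ext^2(\CE,\CE) = 1$ does not degenerate in some boundary case, but this holds for any $\BP^n$-object with $n\geq 1$, and for $n = 0$ the underlying hyper-Kähler $X$ would have dimension zero, which is excluded from the setup.
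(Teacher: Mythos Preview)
Your proof is correct and follows essentially the same approach as the paper's own argument: both combine the inclusion $\IK(\Ker(\chi_\CE))\subset\Ker(\obs_\CE)$ from Lemma~\ref{lem:obstructed_implies_cohomologically_obstructed} with the one-dimensionality of $\Ext^2(\CE,\CE)$ and Theorem~\ref{thm:atomic_equivalent_coh_obstrucion}, then invoke Corollary~\ref{cor:atomic_simple:1-obstructed_iff_Conjecture}. The only cosmetic difference is that the paper phrases the dimension count in terms of kernels (both are $b_2(X)-1$-dimensional) while you phrase it in terms of images (both are one-dimensional); these are dual formulations of the same count.
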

\begin{proof}
	As $\Ext^2(\CE,\CE)\cong \BC$, the kernel of the obstruction map $\Ker(\chi_\CE)$ has at least dimension $b_2(X)-1$. Lemma~\ref{lem:obstructed_implies_cohomologically_obstructed} shows that this kernel is contained under the modified HKR isomorphism in the kernel $\Ker(\obs_\CE)$ of the cohomological obstruction map. By Theorem~\ref{thm:atomic_equivalent_coh_obstrucion}, this space is $b_2(X)-1$-dimensional, which implies that $\CE$ is 1-obstructed. The second assertion now follows from Corollary~\ref{cor:atomic_simple:1-obstructed_iff_Conjecture}.
\end{proof}
In particular, given an $H$-slope stable torsion free atomic sheaf $\CE$ which is also a $\BP^n$-object the connected component of the moduli space $M_H(v(\CE))$ containing $[\CE]$ is a smooth point. In \cite{OGradyModularSheaves}, it is shown that in some examples such moduli spaces are connected. 

Examples of atomic $\BP^n$-objects are line bundles and the sheaves $\iota_\ast \CO_{\BP^n}(k)$ for $\iota \colon \BP^n \subset X$. See also \cite[Thm.\ 1.4]{OGradyModularSheaves} for many slope stable vector bundles on $\mathrm{K3}^{[2]}$-type hyper-Kähler manifolds which are $\BP^n$-objects. 
\subsubsection{$k(x)$-orbit}
Skyscraper sheaves of points $k(x)$ for $x\in X$ are also examples of atomic sheaves. They have the property
\[
\Ext^{\ast}(k(x),k(x)) \cong \bigwedge^\ast \Ext^1(k(x),k(x))
\]
and, therefore, the Yoneda multiplication is again graded-commutative.

Another example of this kind are Lagrangian tori in hyper-Kähler manifolds. Assume we are given a Lagrangian fibration $\pi \colon X \to \BP^n$. A numerically trivial line bundle $\CL$ on a generic fibre $\iota \colon A=\pi^{-1}(\pt) \subset X$ induces the atomic sheaf $\iota_\ast \CL \in \Db(X)$. In \cite{ADMModuli} an example of a derived equivalence is being discussed, which extends the fibrewise Poincar\'e Fourier--Mukai transform. As explained in \cite[Sec.\ 10.2]{BeckmannExtendedIntegral} the generic skyscraper sheaf $k(x)$ for $x\in X$ is being mapped to $\iota_\ast \CL$. In particular, in this situation the results of \cite{MladenovDegeneration, MladenovFormality} as discussed in Section~\ref{sec:Atomic_Lagrangians} extend to all numerically trivial line bundles $\CL$ on generic fibres $A\subset X$. That is, in these cases the local-to-global Ext spectral sequence degenerates multiplicatively and the associated derived endomorphism dg algebra is formal. Therefore, the irreducible component of the moduli space $M$ of slope stable sheaves containing $\iota_\ast \CL$ is in these cases generically smooth and an open subset of $M$ possesses a non-degenerate symplectic form. 

For examples of sheaves with positive rank being derived equivalent to skyscraper sheaves see \cite[Prop.\ 10.1]{BeckmannExtendedIntegral} or \cite[Thm.\ 1.6]{MarkmanObs}. 
\subsubsection{Fano variety of lines on cubics}
The Fano variety of lines $F(Y)$ of a smooth cubic fourfold $Y \subset \BP^5$ admits for every smooth hyperplane section $Y\cap H$ a Lagrangian surface $\iota \colon F(Y\cap H) \subset F(Y)$. Powers $\CL^i \in \Pic(F(Y\cap H))$ of the Plücker polarization yield atomic sheaves $\iota_\ast \CL^i\in \Db(F(Y))$. 

Indeed, the cohomology $\h^\ast(F(Y), \BQ)$ agrees with the Verbitsky component in this case and applying Remark~\ref{rmk:atomic_recovers_extended_vector_on_Verbitsky_comp} and the Grothendieck--Riemann--Roch Theorem, the claim follows from a straightforward Chern character computation. See also \cite[Sec.\ 13]{MarkmanObs} for images of these atomic sheaves under derived equivalences for special cubic fourfolds. Note that in this case we again have an isomorphism
\[
\Ext^\ast(\iota_\ast \CL^i, \iota_\ast \CL^i) \cong \bigwedge^\ast \Ext^1(\iota_\ast \CL^i, \iota_\ast \CL^i) \cong \h^\ast(F(Y\cap H), \BC) .
\]
\subsubsection{Lagrangian plane in double EPW sextics}
In the case of K3 surfaces, the structure of the Ext algebra of simple atomic objects only depends on one numerical value, namely the self-intersection of the Mukai vector or, equivalently, the dimension of the first extension group. The examples of atomic objects discussed above could convey the impression that Ext algebras of atomic objects on higher-dimensional hyper-Kähler manifolds may be as well easy to understand. 
We therefore want to give one more example where the Ext groups have interesting dimensions. 

Let $X$ be a double EPW sextic, see \cite{FerrettiThesis} for an overview of these varieties. The natural antisymplectic involution has a connected Lagrangian surface $\iota \colon Z \subset X$ as fixed locus, which is of general type \cite[Cor.\ 2.9]{FerrettiThesis}. The relevant Hodge numbers are
\[
h^{1,0}=0, \quad h^{2,0}=45, \quad h^{1,1}=100,
\]
see \cite[Sec.\ 3.3]{FerrettiThesis}. In the proof of \cite[Prop.\ 4.22]{FerrettiThesis} the following equalities
\begin{equation*}
	\iota_\ast[Z] = 5h^2-\frac{\ci_2(X)}{3}, \quad \ci_3(\iota_\ast \omega_Z) =9h\cdot \iota_\ast[Z], \quad \ci_4(\iota_\ast \omega_Z) = \iota_\ast[Z]^2 -63 h^2\cdot \iota_\ast[Z]
\end{equation*}
in $\h^\ast(X,\BQ)$ are obtained, where $h$ is the canonical polarization on $X$ obtained from the description as a double cover. Using $\ci_1(Z) = -3\iota^\ast h \in \h^2(Z,\BQ)$, it is straightforward to verify that the cohomological obstruction map has one-dimensional image using Remark~\ref{rmk:atomic_recovers_extended_vector_on_Verbitsky_comp}. 

In particular, we have that $\iota \colon Z \subset X$ is an atomic Lagrangian and $\iota_\ast \CO_Z$ is an atomic sheaf. Via adjunction, we therefore have
\[
\Ext^0(\iota_\ast \CO_Z,\iota_\ast \CO_Z) \cong \BC, \quad \Ext^1(\iota_\ast \CO_Z,\iota_\ast \CO_Z) = 0, \quad \Ext^2(\iota_\ast \CO_Z,\iota_\ast \CO_Z) \cong \BC^{190}.
\]
From \cite[Sec.\ 3.3]{FerrettiThesis} we know that $\ci_1(Z) = -3\iota^\ast h + \tau \in \h^2(Z,\BZ)$ for a two-torsion class $\tau$. Especially, in this example we have that $\ci_1(Z)$ is not contained in the image of the restriction map
\[
\iota^\ast \colon \h^2(X,\BZ) \to \h^2(Z,\BZ)
\]
with integer coefficients, whereas this holds true with rational coefficients by Theorem~\ref{thm:atomic_Lagrangian_structure}. 
\subsection{Tangent bundle}
The following is the most prominent example of a bundle which is modular, slope stable and hyperholomorphic, but not atomic as soon as the dimension of the manifold is greater than two. 
\begin{prop}
\label{prop:tangent_bundle_not_atomic}
	Let $\CT_X$ be the tangent bundle of a hyper-Kähler manifold $X$ of dimension $2n>2$ which is of $\Kdrein, \mathrm{Kum}_n, \mathrm{OG}6$ or $\mathrm{OG}10$-type, or an arbitrary hyper-Kähler manifold of dimension four. Then $\CT_X$ is not atomic. 
\end{prop}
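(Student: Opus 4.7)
By Theorem~\ref{thm:atomic_equivalent_coh_obstrucion}, $\CT_X$ is atomic if and only if the cohomological obstruction map $\obs_{\CT_X}\colon \HT^2(X)\to \h^\ast(X,\Omega_X^\ast)$ has one-dimensional image. The plan is to show that this image has dimension exactly two, thereby ruling out atomicity.

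The first step sidelines the large summand $\h^1(X,\CT_X)\subset \HT^2(X)$. The Chern character of the tangent bundle is a topological invariant, and $\CT_X$ deforms along with every deformation of the complex structure on $X$; hence $v(\CT_X)=\ch(\CT_X)\tdd$ stays of Hodge type under every infinitesimal commutative Kähler deformation, so $\mu\lrcorner v(\CT_X)=0$ for all $\mu\in \h^1(X,\CT_X)$. This gives $\h^1(X,\CT_X)\subseteq \Ker(\obs_{\CT_X})$, and reduces the problem to understanding the two contractions $\bar\sigma\lrcorner v(\CT_X)$ and $\sigma^\vee\lrcorner v(\CT_X)$, where $\bar\sigma$ spans $\h^2(X,\CO_X)$ and $\sigma^\vee$ is the Poisson bivector spanning $\h^0(X,\Lambda^2\CT_X)$.

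The contraction $\bar\sigma\lrcorner v(\CT_X)=\bar\sigma\cup v(\CT_X)$ is plainly nonzero, with lowest-degree term $2n\,\bar\sigma\in \h^{0,2}(X)$. For $\sigma^\vee\lrcorner v(\CT_X)$, I would reuse the identification from the proof of Proposition~\ref{prop:Lagrangian_comparison_kernels_LLV}: contraction by $\sigma^\vee$ agrees up to a nonzero scalar with $\Lambda_\sigma$ in the $\Fs\Fl_2$-triple $(e_\sigma,h_\sigma,\Lambda_\sigma)$. Since $v(\CT_X)_{4n}\in \h^{2n,2n}(X)\cong\BC$ is killed by $e_\sigma$ for dimension reasons and is nonzero for each listed type by a characteristic-number computation (e.g., using G\"ottsche-type formulas on $\Kdrein$ and direct Hirzebruch--Riemann--Roch calculations in dimension four), it is a highest-weight vector of $h_\sigma$-weight $n$; thus $\Lambda_\sigma v(\CT_X)_{4n}\neq 0$, giving $\sigma^\vee\lrcorner v(\CT_X)\neq 0$. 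Linear independence of the two contractions is then obtained via the branching-rule discussion after Proposition~\ref{prop:Atomic_Verbitsky_component_one_trivial_representation}: if $\CT_X$ were atomic, the decomposition \eqref{eq:decomposition_Mukaivector} of $v(\CT_X)$ would only involve LLV-irreducible summands of form $V_{(k)}$. Using $\ci_1(\CT_X)=0$ and $v(\CT_X)_4 = \tfrac{n-12}{12}\,\ci_2(X)$, one then exhibits a nontrivial projection of $\ci_2(X)$ onto a non-$V_{(k)}$ summand for each listed type, producing a contradiction; the degenerate case $\tq(\tv)=0$ (i.e.\ $\tv=2n\alpha$) is ruled out separately by noting that Proposition~\ref{prop:Atomic_Verbitsky_component_one_trivial_representation} would then force $v(\CT_X)_\SH\in \BQ\langle\1\rangle$, which is incompatible with $\ci_2(X)$ having a nontrivial Verbitsky projection proportional to $\mathsf{q_2}$.

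The main obstacle is the verification underlying both Steps~2 and~3: the non-vanishing of $v(\CT_X)_{4n}$ and, equivalently, the exhibition of a nonzero non-$V_{(k)}$-type component of $\ci_2(X)$. For $\Kdrein$, $\mathrm{Kum}_n$, $\mathrm{OG}6$, and $\mathrm{OG}10$ this follows from the explicit LLV decomposition of $\h^\ast(X,\BQ)$ available in the literature, combined with the Beauville--Bogomolov--Fujiki expression for the Verbitsky part of $\ci_2(X)$; the arbitrary four-dimensional case is handled by a direct computation, using that the cohomology $\h^4(X,\BQ)$ strictly contains $\SH^4(X,\BQ)$ (as seen from the short exact sequence \eqref{eq:ses_Verbitskycomponentasg(X)module}) and that the topological class $\ci_2(X)$ has nontrivial pairing with representatives of the complement.
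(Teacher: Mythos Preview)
Your reduction in Step~1 is correct: the Chern classes of $\CT_X$ are topological, so $\h^1(X,\CT_X)\subset \Ker(\obs_{\CT_X})$ and one is left with the two contractions by $\bar\sigma$ and $\sigma^\vee$. The nonvanishing of each individually is also fine. The genuine gap is in Step~3, the linear independence. Your argument there rests on producing a nonzero projection of $\ci_2(X)$ onto an LLV-summand \emph{not} of the form $V_{(k)}$. But for $X$ of $\mathrm{K3}^{[2]}$-type one has $\h^\ast(X,\BQ)=\SH(X,\BQ)=V_{(2)}$ on the nose, so no such component exists and the branching-rule contradiction is vacuous. Your fallback for dimension four is based on a misreading of \eqref{eq:ses_Verbitskycomponentasg(X)module}: that sequence compares $\SH$ with $\Sym^n(\tH)$, not with $\h^\ast(X,\BQ)$, and in particular says nothing about $\h^4\supsetneq \SH^4$; again $\mathrm{K3}^{[2]}$ is a counterexample. (Separately, the claimed equivalence between $v(\CT_X)_{4n}\neq 0$ and the non-$V_{(k)}$ projection of $\ci_2(X)$ is not correct; these are unrelated statements.) So as written the proof does not cover $\mathrm{K3}^{[2]}$, and more generally the non-$V_{(k)}$ claim for the other listed types is asserted rather than checked.

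The paper's proof avoids all of this by working \emph{entirely inside} the Verbitsky component and extracting a purely numerical contradiction. Assuming atomicity, Proposition~\ref{prop:Atomic_Verbitsky_component_one_trivial_representation} forces $v(\CT_X)_\SH=\tfrac{2n}{n!}\,T((\alpha+k\beta)^n)$; comparing the degree-4 terms with the analogous formula for $v(\CO_X)_\SH$ pins down $k=\tfrac{2n-24}{2n}r_X$, and then comparing the degree-8 terms yields a linear relation between the generalized Fujiki constants $C(\ci_2(X)^2)$ and $C(\ci_4(X))$. For arbitrary dimension four this, together with $\int_X\td=3$, forces $\int_X\ci_2^2=576$ and $\int_X\ci_4=-432$, violating Guan's bounds; for the known deformation types it contradicts the Fujiki constants computed from the Riemann--Roch polynomial. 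No information about the LLV decomposition outside the Verbitsky component is needed.
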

\begin{proof}
	Let us assume that $\CT_X$ is atomic. The projection $v(\CT_X)_\SH \in \SH(X,\BQ)$ is non-zero and using Remark~\ref{rmk:atomic_recovers_extended_vector_on_Verbitsky_comp} we must have
	\begin{align}
		\begin{split}
		\label{eq:97514}
		v(\CT_X)_\SH&=\left( 2n +\frac{2n-24}{24} \ci_2(X) + \frac{120+7n}{2880} \ci_2(X)^2 - \frac{120+n}{720}\ci_4(X) + \dots \right)_\SH \\
		&=\frac{2n}{n!}T\left( \alpha + k\beta \right)^n
	\end{split}
	\end{align}
	for some $k\in \BQ$. From \cite[Prop.\ 3.4]{BeckmannExtendedIntegral} we know that there exists $r_X\in \BQ$ such that 
	\begin{equation}
		\label{eq:98018}
		v(\CO_X)_\SH = \frac{1}{n!}T(\alpha + r_X \beta)^n.
	\end{equation}
	From equations \eqref{eq:97514} and \eqref{eq:98018} we infer that
	\begin{equation}
	\label{eq:105514}
	k = \frac{2n-24}{2n}r_X
	\end{equation}
	by comparing coefficients in degree four. 
	
	If now $n=2$, we compare the coefficients in front of $T(\beta^2)$ in \eqref{eq:97514} and \eqref{eq:98018} to obtain the following equality in degree eight
	\begin{align*}
		100 \td^{1/2}_{4} &= \left( \frac{35}{288}\ci_2(X)^2 - \frac{5}{72} \ci_4(X) \right) \\
		&=  \left( \frac{67}{1440} \ci_2(X)^2 - \frac{61}{360}\ci_4(X) \right) = v(\CT_X)_{4} \in \h^8(X,\BQ).
	\end{align*}
	Together with the relation $\int_X\td = 3$ involving $\ci_2(X)^2$ and $\ci_4(X)$ we obtain the unique solution
	\[
	\int_X \ci_2(X)^2 = 576, \quad \int_X \ci_4(X)= -432
	\]
	which violates the known bounds of Guan \cite{Guan4dimHK}.
	
	In the known examples, we proceed analogously making use of the fact that we know the generalized Fujiki constants $C(\ci_2(X)^2)$ and $C(\ci_4(X))$ through knowing the Riemann--Roch polynomial \cite[Cor.\ 2.7]{BeckmannSongSecondChernFujiki}. Recall that the knowledge of the generalized Fujiki constant $C(\gamma)$ of a class $\gamma\in \h^{4s}(X,\BQ)$ is precisely knowing the projection $\gamma_\SH\in \SH^{4s}(X,\BQ)$ for a class $\gamma$ which stays of type $(2s,2s)$ on all deformations. 
	
	From \eqref{eq:98018} we infer
	\[
	C(\td^{1/2}_4)\mathsf{q}_4 = \frac{1}{n!}{n \choose 2} r_x^2 T(\alpha^{n-2}\beta^2),
	\]
	where $\mathsf{q}_4\in \SH^8(X,\BQ)$ is defined by the property
	\[
	\int_X \lambda^{2n-4}\mathsf{q}_4 = \mathrm{q}(\lambda)^{n-2}
	\]
	for all $\lambda \in \h^2(X,\BQ)$. Analogously to the four-dimensional case, using \eqref{eq:97514} and \eqref{eq:105514} we get
	\[
	C(v(\CT_X)_4) \mathsf{q}_4 = \frac{2n}{n!} {n \choose 2}  \left( \frac{2n-24}{2n} \right)^2 r_X^2 T(\alpha^{n-2}\beta^2).
	\]
	Combining these two equations, we obtain an equation involving $C(\ci_2(X)^2)$ and $C(\ci_4(X))$ which is violated in all the known examples, see \cite[Sec.\ 4]{BeckmannSongSecondChernFujiki}. 	
\end{proof}

\begin{rmk}
	In particular, in all of the above cases the tangent bundle is not 1-obstructed. We know that the tangent bundle does deform along to all geometric deformations coming from $\h^1(X,\CT_X)$. Together with Lemma~\ref{lem:obstructed_implies_cohomologically_obstructed} we infer that the two noncommutative first order deformation directions, namely the gerby and the Poisson deformations, yield different obstructions in $\Ext^2(\CT_X, \CT_X)$. 
\end{rmk}

\subsection{Hard Lefschetz}
We discuss here a possible $\Fs\Fl_2$-structure on the Ext algebra $\Ext^\ast(\CE,\CE)$ for simple atomic sheaves and complexes. 

Recall the following result due to Verbitsky \cite[Thm.\ 4.2A]{VerbitskyHyperholomorphicoverHK}.
\begin{thm}
	Let $\CE$ be a slope stable (projectively) hyperholomorphic bundle. The image of $\bar{\sigma} \in \h^2(X,\CO_X)$ under the obstruction map yields an element $f \in \Ext^2(\CE,\CE)$ which has the Hard Lefschetz property for the algebra $\Ext^\ast(\CE,\CE)$. 
\end{thm}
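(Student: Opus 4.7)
The plan is to reduce the statement to a Hard Lefschetz theorem for the cohomology of a hyperholomorphic bundle, after identifying Yoneda multiplication by $f$ with cup product by $\bar\sigma$.

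First, since $\CE$ is slope stable and projectively hyperholomorphic, the bundle $\mathscr{End}(\CE)$ is hyperholomorphic by the characterization recalled in Section~\ref{sec:hyperhol_and_modular_bundles} (in particular \cite[Prop.\ 11.1]{VerbitskyHyperholomorphicoverHK}). Via the standard identification $\Ext^\ast(\CE,\CE)\cong \h^\ast(X,\mathscr{End}(\CE))$, Yoneda multiplication is the composition of cup product on sheaf cohomology with composition in $\mathscr{End}(\CE)$.

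Next, I would identify the class $f$ explicitly using the commutative diagram~\eqref{diag:obstruction_and_HKR}, which gives $f = \bar\sigma \lrcorner \exp(\At_\CE)$. Since $\bar\sigma\in\h^2(X,\CO_X)\subset \HT^2(X)$ has polyvector degree zero, only the $k=0$ term of $\exp(\At_\CE)$ contributes to the contraction, so that
\[
f = \bar\sigma\cdot\id_\CE \in \h^2(X,\mathscr{End}(\CE)).
\]
Consequently the Yoneda operator $f\circ(\_)\colon \Ext^\ast(\CE,\CE)\to \Ext^{\ast+2}(\CE,\CE)$ reduces to the plain cup product operator $e_{\bar\sigma}\colon x\mapsto \bar\sigma\cup x$ on $\h^\ast(X,\mathscr{End}(\CE))$.

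It therefore suffices to establish the Hard Lefschetz property of $e_{\bar\sigma}$ on $\h^\ast(X,\mathscr{End}(\CE))$. For this I would invoke the enhanced K\"ahler package for hyperholomorphic bundles. Lifting $\mathscr{End}(\CE)$ to a holomorphic bundle $\CF$ on the twistor space $\pi\colon\CX\to\BP^1$ and using the harmonic theory of the hyperholomorphic connection, one trivialises $\h^\ast(X_t,\mathscr{End}(\CE)_t)$ into a common vector space as $t$ varies on the twistor line. In each of the three complex structures $I,J,K$ coming from the hyper-K\"ahler structure, the associated K\"ahler form acts as a Hard Lefschetz operator on this space. Since, up to normalisation, $\bar\sigma = \omega_J - i\omega_K$ for the original complex structure $I$, the $\Fs\Fu(2)$-representation theory generated by the K\"ahler operators for $I,J,K$ supplies an $\Fs\Fl_2$-triple completing $e_{\bar\sigma}$, which is exactly the Hard Lefschetz property for $f$.

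The main obstacle lies in this final step: transferring the K\"ahler identities between the three complex structures while working with bundle-valued cohomology. This is where the hyperholomorphicity of $\mathscr{End}(\CE)$ is essential, as it supplies an $\mathrm{SU}(2)$-invariant connection whose harmonic theory is compatible with the change of complex structure, and thus permits an LLV-type action on $\h^\ast(X,\mathscr{End}(\CE))$ analogous to the one on $\h^\ast(X,\BC)$.
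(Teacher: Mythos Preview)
The paper does not give a proof of this theorem; it is stated as a result of Verbitsky with a citation to \cite[Thm.\ 4.2A]{VerbitskyHyperholomorphicoverHK}, and no argument is supplied. So there is no ``paper's own proof'' to compare your proposal against.

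That said, your sketch is essentially the correct reconstruction of Verbitsky's argument. The identification $f=\bar\sigma\cdot\id_\CE$ is right: since $\bar\sigma\in\h^2(X,\CO_X)=\h^2(X,\Lambda^0\CT_X)$ has no polyvector part, only the $k=0$ term of $\exp(\At_\CE)$ survives the contraction, and Yoneda multiplication by $f$ becomes cup product by $\bar\sigma$ on $\h^\ast(X,\mathscr{End}(\CE))$. The remaining step---Hard Lefschetz for $e_{\bar\sigma}$ on the Dolbeault cohomology of the hyperholomorphic bundle $\mathscr{End}(\CE)$---is exactly what Verbitsky proves, via the $\mathrm{SU}(2)$-equivariant harmonic theory for bundles with $\mathrm{SU}(2)$-invariant curvature. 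Your description of this step (trivialising along the twistor line, using that $\bar\sigma$ is a complex linear combination of K\"ahler forms for $J$ and $K$, and assembling the Lefschetz operators into an $\Fs\Fl_2$-triple) matches the strategy in \cite{VerbitskyHyperholomorphicoverHK}. You also correctly flag that this is the substantive part and that hyperholomorphicity of $\mathscr{End}(\CE)$ is what makes the $\mathrm{SU}(2)$-action available on bundle-valued forms.

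One small point: to make the argument self-contained you would need to justify that the $\Fs\Fl_2$-triples for $\omega_J$ and $\omega_K$ on $\h^\ast(X,\mathscr{End}(\CE))$ really do combine to give one for $e_{\bar\sigma}$ with respect to the Dolbeault grading in complex structure $I$; this is not just linear algebra in $\Fs\Fu(2)$ but uses the full quaternionic Lefschetz package Verbitsky develops. As written your final paragraph gestures at this rather than carrying it out, which is appropriate for a sketch but would not stand as a complete proof.
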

The Hard Lefschetz property means that
\[
f^i\circ \_ \colon \Ext^{n-i}(\CE,\CE) \to \Ext^{n+i}(\CE,\CE)
\]
is an isomorphism for all $i>0$. Note that $\Ext^\ast(\CE,\CE) \cong \h^\ast(\EndS(\CE,\CE))$ and 
\[
\EndS(\CE,\CE) \cong \CO_X \oplus \EndS(\CE,\CE)_0
\] 
via the trace morphism, where $\EndS(\CE,\CE)_0$ is the bundle of traceless endomorphisms. The image of the subalgebra generated by the Hard Lefschetz element $f$ corresponds under this isomorphism to $\h^\ast(\CO_X)$. 

Using Proposition~\ref{prop:atomic_is_proj_hyperholomorphic} we obtain.
\begin{cor}
	For a slope stable atomic bundle $\CE$ there exists an element $f \in \mathrm{Im}(\chi_\CE)$ of degree two which has the Hard Lefschetz property.
\end{cor}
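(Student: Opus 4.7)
The plan is to combine Verbitsky's Hard Lefschetz theorem (stated just above) with our Proposition~\ref{prop:atomic_is_proj_hyperholomorphic}, then to identify the resulting Hard Lefschetz element as an element of $\mathrm{Im}(\chi_\CE)$ using the commutative diagram \eqref{diag:obstruction_and_HKR} relating $\chi_\CE$ to contraction with the exponential Atiyah class.

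First, since $\CE$ is slope stable and atomic, Proposition~\ref{prop:atomic_is_proj_hyperholomorphic} shows that $\CE$ is projectively hyperholomorphic with respect to the chosen polarization. Verbitsky's theorem then produces an element $f \in \Ext^2(\CE,\CE)$ obtained as the image of $\bar{\sigma} \in \h^2(X,\CO_X)$ under the obstruction map associated to $\CE$, and this $f$ satisfies the Hard Lefschetz property on $\Ext^\ast(\CE,\CE)$.

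Next, I would observe that the map Verbitsky uses is exactly the restriction to $\h^2(X,\CO_X) \subset \HT^2(X)$ of the contraction with the exponential Atiyah class, namely $\mu \mapsto \mu \lrcorner \exp(\At_\CE)$. By the commutative diagram~\eqref{diag:obstruction_and_HKR}, this morphism factors as
\[
\h^2(X,\CO_X)\hookrightarrow \HT^2(X) \xrightarrow{(\IHKR)^{-1}} \HH^2(X) \xrightarrow{\chi_\CE} \Ext^2(\CE,\CE),
\]
so that $f = \chi_\CE(\gamma)$ for $\gamma = (\IHKR)^{-1}(\bar{\sigma}) \in \HH^2(X)$. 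In particular $f \in \mathrm{Im}(\chi_\CE)$, and it is of degree two by construction, yielding the corollary.

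No step in this argument is the main obstacle: the serious input is Verbitsky's Hard Lefschetz theorem together with Proposition~\ref{prop:atomic_is_proj_hyperholomorphic}, both already established. The only thing to check carefully is that the Verbitsky construction of the Hard Lefschetz element from $\bar{\sigma}$ agrees with the composition of the HKR isomorphism and $\chi_\CE$ on the subspace $\h^2(X,\CO_X)$, which is precisely the content of \cite{HuangQuestion} recorded in~\eqref{diag:obstruction_and_HKR}.
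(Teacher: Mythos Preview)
Your argument is correct and follows exactly the paper's approach: combine Proposition~\ref{prop:atomic_is_proj_hyperholomorphic} with Verbitsky's Hard Lefschetz theorem. The paper states the corollary as an immediate consequence (``Using Proposition~\ref{prop:atomic_is_proj_hyperholomorphic} we obtain'') because in its formulation of Verbitsky's theorem the element $f$ is already described as the image of $\bar{\sigma}$ under the obstruction map, so the containment $f\in\mathrm{Im}(\chi_\CE)$ is taken as built in; your use of diagram~\eqref{diag:obstruction_and_HKR} simply makes this identification explicit.
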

Assuming Conjecture~\ref{conj:serre_duality_image_obstruction_map} we have that the image of the obstruction map in degree two is spanned by a Hard Lefschetz element.

Similarly, for atomic Lagrangians $\iota \colon L \subset X$ we can consider the multiplicative isomorphism
\begin{equation}
\label{eq:128110}
\Ext^\ast(\iota_\ast \CO_L, \iota_\ast \CO_L) \cong \h^\ast(L,\BC)
\end{equation}
alluded to in Section~\ref{subsec:atomic_Lagrangian_Graded_Commutativity}. By Theorem~\ref{thm:atomic_Lagrangian_structure} and the discussion in Section~\ref{subsec_atomic_Lagrangian_1-Obstructedness}, there exists an element $\mu \in \h^1(X,\CT_X)$ whose image under the obstruction map $\chi_{\iota_\ast \CO_L}$ followed by the isomorphism \eqref{eq:128110} and projected to $\h^1(L,\Omega_L^1)$ yields an ample class. From this we deduce. 
\begin{prop}
	For an atomic Lagrangian $\iota \colon L \subset X$ the image of $\h^1(X,\CT_X)$ under the obstruction map is spanned by an element $f \in \Ext^2(\iota_\ast \CO_L, \iota_\ast \CO_L)$ having the Hard Lefschetz property.
\end{prop}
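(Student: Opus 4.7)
The plan is to identify, via the ring isomorphism $\Ext^\ast(\iota_\ast \CO_L, \iota_\ast \CO_L) \cong \h^\ast(L,\BC)$ from Proposition~\ref{prop:atomic_Lagrangian_Ext_Graded_Commutative}, the image of $\h^1(X,\CT_X)$ under the obstruction map with the line spanned by the restriction of a Kähler class on $X$, and then to invoke the classical Hard Lefschetz theorem on $L$.

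First I would unwind the restriction of $\chi_{\iota_\ast \CO_L}$ to the $\h^1(X,\CT_X)$-summand of $\HT^2(X)$. Adjunction together with the Lagrangian isomorphism $\CN_{L/X}\cong \Omega_L^1$ yields
\begin{equation*}
\Ext^2(\iota_\ast \CO_L, \iota_\ast \CO_L) \cong \h^2(L,\CO_L) \oplus \h^1(L,\Omega_L^1) \oplus \h^0(L,\Omega_L^2),
\end{equation*}
and by bidegree considerations the image of $\h^1(X,\CT_X)$ lands in the middle summand. As recalled in Section~\ref{subsec_atomic_Lagrangian_1-Obstructedness}, this induced morphism $\h^1(X,\CT_X) \to \h^1(L,\Omega_L^1)$ coincides, up to a nonzero scalar and the symplectic isomorphism $\CT_X \cong \Omega_X^1$, with the pullback map $\iota^\ast\colon \h^1(X,\Omega_X^1) \to \h^1(L,\Omega_L^1)$.

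Next I would use the atomicity of $L$ to pin down this pullback. By Theorem~\ref{thm:atomic_Lagrangian_structure} the map $\iota^\ast\colon \h^2(X,\BQ)\to \h^2(L,\BQ)$ has one-dimensional image, and picking any Kähler class $\omega \in \h^{1,1}(X,\BR)$ on $X$, its restriction $\iota^\ast\omega$ is a Kähler class on $L$ and hence generates $\iota^\ast\h^{1,1}(X,\BR)$. Consequently the image of $\h^1(X,\CT_X)$ under $\chi_{\iota_\ast \CO_L}$ is the one-dimensional subspace of $\h^1(L,\Omega_L^1)$ spanned by $\iota^\ast\omega$; transported via the multiplicative isomorphism \eqref{eq:128110}, it is spanned by a single element $f \in \Ext^2(\iota_\ast \CO_L, \iota_\ast \CO_L)$ corresponding to a nonzero multiple of the Kähler class $\iota^\ast\omega$.

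The conclusion then follows from the classical Hard Lefschetz theorem on the compact Kähler manifold $L$ of complex dimension $n$: cup product with $(\iota^\ast\omega)^i$ is an isomorphism $\h^{n-i}(L,\BC) \xrightarrow{\sim} \h^{n+i}(L,\BC)$ for every $i>0$, and since the isomorphism \eqref{eq:128110} is a ring isomorphism, Yoneda composition with $f^i$ has the same effect on $\Ext^\ast(\iota_\ast \CO_L, \iota_\ast \CO_L)$. The main obstacle is the factorization in the first step: one has to verify that the $\h^1(X,\CT_X)$-piece of the obstruction map really lands in the $\h^{1,1}$-summand and factors (up to scalar) through $\iota^\ast$, and that the resulting nonzero image is realized by an actual Kähler class. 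The nonvanishing is guaranteed by Lemma~\ref{lem:Lagrangian_subvar_not_deform_everywhere}, while the identification with $\iota^\ast$ is built into the structural results of Section~\ref{sec:Atomic_Lagrangians}.
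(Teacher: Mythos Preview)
Your proof is correct and follows essentially the same path as the paper's (brief) argument: you transport the image of $\h^1(X,\CT_X)$ through the multiplicative isomorphism \eqref{eq:128110}, identify it via Section~\ref{subsec_atomic_Lagrangian_1-Obstructedness} with the line spanned by the restriction of a Kähler class, and invoke the classical Hard Lefschetz theorem on $L$. The paper phrases the middle step slightly more cautiously (``projected to $\h^1(L,\Omega_L^1)$ yields an ample class''), but the content is the same.
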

Again one can use auto-equivalences to obtain the same conclusion for a wider range of atomic objects. 

Let $\CE$ be a simple atomic object. The Hard Lefschetz property for an element $\chi_\CE(\mu)=\mu_\CE=f\in \Ext^2(\CE,\CE)$ in the image of $\chi_\CE$ in degree two in particular implies that $0 \neq \mu_\CE^n=f^n\in \Ext^{2n}(\CE,\CE)$. Using once more the defining property of the Hochschild Chern character we get
\[
\Tr_{X \times X}(\mu^n \circ \chHH(\CE) ) = \Tr_X(\mu_\CE^n) \neq 0.
\]
Thus, there must exist an element $\gamma \in \HT^2(X)$ such that $\gamma^n \lrcorner v(\CE) \neq 0$. This implies that the projection $v(\CE)_\SH$ of $v(\CE)$ to the Verbitsky component $\SH(X,\BQ)$ is non-zero, as the Verbitsky component is the irreducible representation exhausting $\h^{0,2n}(X)$ which contains $\gamma^n \lrcorner v(\CE)$. In all examples of simple atomic objects $\CE$ we are aware of, the condition $v(\CE)_\SH \neq 0$ is satisfied. For example, if $\CE$ is a sheaf or derived equivalent to an object with non-zero rank, we know this holds true by Lemma~\ref{lem:atomic_sheaf_in_Verbitsky}. 

Assuming Conjecture~\ref{conj:serre_duality_image_obstruction_map}, we expect that the generator of the image of $\chi_\CE$ in degree two for a simple atomic object always has the Hard Lefschetz property when $v(\CE)$ projects non-trivially to the Verbitsky component. 

\appendix

\section{Spherical objects on hyper-Kähler manifolds}
In Section~\ref{sec:obstruction_maps} we studied the interplay of the obstruction map and the cohomological obstruction map. In the appendix, we want to further use the relationship between topological properties of the Mukai vector $v(\CE)$ of an object $\CE \in \Db(X)$ and its extension groups $\Ext^\ast(\CE,\CE)$. Throughout this section $X$ is a fixed projective hyper-Kähler manifold of dimension $2n$. 

Let us define the subalgebras
\begin{equation*}
	R_i \subset \HH^\ast(X)
\end{equation*}
generated by all elements of degree at most $i$ for $2 \leq i \leq 2n$. Since the modified HKR isomorphism is graded as well as multiplicative there are analogous subalgebras
\begin{equation*}
	W_i\coloneqq \IK(R_i) \subset \HT^\ast(X).
\end{equation*}
Recall that $\HO_{\ast}(X)$ is a free $\HT^\ast(X)$-module of rank one with generator $\sigma^n$ leading to the isomorphism
\[
\varphi \colon \HT^\ast(X) \cong \HO_{\ast}(X), \quad \mu \mapsto \mu \lrcorner \sigma^n. 
\]
We denote $U_i \coloneqq \varphi(W_i)$. One can check that this equals the subalgebra of the de Rham algebra $\h^\ast(X,\Omega^\ast_X)$ generated by elements of degree at most $i$. To illustrate the above, for $i=2$ we have 
\[
\varphi(W_2)=U_2 = \SH(X,\BC)\subset \h^\ast(X,\Omega_X^\ast)\cong \h^\ast(X,\BC).
\] 
Similar comparisons can be made for larger $i$. 

\begin{prop}
	\label{prop:appendix_Serre_duality_non_degenerate}
	Let $\CE\in \Db(X)$ be an object and $\mu \in R_i$ such that $\mu \circ \chHH(\CE) \neq 0$. Then there exists $2 \leq j \leq i$ such that $0 \neq \Ext^j(\CE,\CE)$. 
\end{prop}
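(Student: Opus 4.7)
The plan is to prove the contrapositive: assuming $\Ext^j(\CE,\CE) = 0$ for all $2 \le j \le i$, I will show that every positive-degree $\mu \in R_i$ satisfies $\mu \circ \chHH(\CE) = 0$, which is the substantive content of the statement.

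The first step is to upgrade Lemma~\ref{lem:obstructed_implies_cohomologically_obstructed} to arbitrary cohomological degrees. The claim is: for any $\nu \in \HH^\ast(X)$, if $\nu_\CE = 0 \in \Ext^{|\nu|}(\CE,\CE)$ then $\nu \circ \chHH(\CE) = 0$ in $\HH_\ast(X)$. The proof is exactly the one Huang gave for degree two: for any $\tau \in \HH^\ast(X)$, combining multiplicativity of the evaluation map $\HH^\ast(X)\to \Ext^\ast(\CE,\CE)$ with the defining property of the Hochschild Chern character yields
\[
\Tr_{X \times X}(\tau \circ \nu \circ \chHH(\CE)) = \Tr_X((\tau \circ \nu)_\CE) = \Tr_X(\tau_\CE \circ \nu_\CE) = 0,
\]
and the non-degeneracy of the Serre duality pairing $\Tr_{X \times X}$ on $\HH_\ast(X)$ then forces $\nu \circ \chHH(\CE) = 0$. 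This one-line generalization is the core technical input.

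The second step is algebraic. Because $X$ is hyper-Kähler we have $\HH^1(X) \cong \h^1(X,\CO_X) \oplus \h^0(X,\CT_X) = 0$ (simply connected, no global holomorphic vector fields), so $R_i$ is generated as an algebra by $\HH^0(X)=\BC\cdot \id$ together with the spaces $\HH^j(X)$ for $2 \le j \le i$. A positive-degree element $\mu \in R_i$ can therefore be expanded as a sum $\mu = \sum_\ell c_\ell\, \nu_{\ell,1} \circ \cdots \circ \nu_{\ell, m_\ell}$ in which every factor $\nu_{\ell,k}$ has degree in the range $\{2,\ldots,i\}$. By hypothesis each $\nu_{\ell,k,\CE} = 0$ in $\Ext^{|\nu_{\ell,k}|}(\CE,\CE)$, so multiplicativity of the evaluation map gives $(\nu_{\ell,1} \circ \cdots \circ \nu_{\ell, m_\ell})_\CE = \nu_{\ell,1,\CE} \circ \cdots \circ \nu_{\ell,m_\ell,\CE} = 0$ termwise. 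Hence $\mu_\CE = 0$, and the first step then forces $\mu \circ \chHH(\CE) = 0$, completing the contrapositive.

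The main obstacle I see is the potential contribution of $\HH^1(X)$ to the generators of $R_i$, which could in principle produce products whose factor degrees dip below $2$ and escape the range controlled by the Ext-vanishing hypothesis; the hyper-Kähler vanishing $\HH^1(X)=0$ is exactly what rules this out. The remaining subtlety is making sure that the generalization of Lemma~\ref{lem:obstructed_implies_cohomologically_obstructed} invokes the full non-degeneracy of the Mukai/Serre pairing on $\HH_\ast(X\times X)$ across all degrees, not only in the degree where it was originally stated; beyond that, the rest of the argument is essentially combinatorial.
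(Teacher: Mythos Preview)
Your argument is correct and is essentially the contrapositive of the paper's proof: the paper argues directly that $\mu \circ \chHH(\CE) \neq 0$ forces $\mu_\CE \neq 0$ via the defining property of $\chHH$ and Serre duality, then writes $\mu$ as a sum of products of generators in $\HH^s(X)$ with $2 \le s \le i$ and concludes that some factor $(\gamma^l_k)_\CE$ is nonzero. You run the same three steps in reverse order. The one place you are actually more careful than the paper is in explicitly noting that $\HH^1(X)=0$ on a hyper-Kähler manifold, which is what guarantees that the factors can be taken in degrees $\ge 2$; the paper uses this silently.
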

\begin{proof}
	The defining property of the Hochschild Chern character together with the non-degeneracy of the Serre duality trace shows that there exists $\gamma \in \HH^\ast(X)$ such that
	\[
	0 \neq \Tr_{X \times X}(\gamma \circ \mu \circ \chHH(\CE))= \Tr_X(\gamma_\CE \circ \mu_\CE).
	\]
	In particular, $0 \neq \mu_\CE \in \Ext^\ast(\CE,\CE)$. 
	
	Since $\mu \in R_i$, we can write 
	\[
	\mu = \sum_k \gamma^1_k \circ \cdots \circ \gamma^r_k
	\]
	and each $\gamma^l_k$ is contained in $\HH^s(X)$ for $2 \leq s \leq i$. Now, $\mu_\CE \neq 0$ implies that there must exist $k$ such that
	\[
	0 \neq (\gamma^1_k)_\CE \circ \cdots \circ (\gamma^r_k)_\CE \in \Ext^\ast(\CE,\CE)
	\]
	which implies that $0 \neq (\gamma_k^l)_\CE \in \Ext^s(\CE,\CE)$. 
\end{proof}
We note that $\HT^\ast(X)$ is equipped with a non-degenerate pairing $\langle\_,\_ \rangle$ given by
\[
\langle v,w \rangle \coloneqq \mathrm{pr}_{\HT^{4n}(X)}(v\wedge w) \in \HT^{4n}(X)\cong \BC,
\]
i.e.\ one takes the normal product of two elements and projects it to the top degree component $\HT^{4n}(X)$. Note that under the multiplicative isomorphism
\[
\HT^\ast(X) \cong \h^\ast(X,\BC)
\]
from \eqref{eq:iso_rings_HH_HT_HO} induced by the isomorphism $\CT_X \cong \Omega_X^1$ coming from a symplectic form (which is different than the isomorphism $\varphi$), the non-degenerate pairing $\langle \_,\_ \rangle$ corresponds to
\[
(v,w) \mapsto \int_X vw
\]
up to scaling. From Hard Lefschetz and the Hodge--Riemann bilinear relations we deduce that for each $i$ we have a orthogonal decomposition
\begin{equation}
	W_i \oplus W_i^\perp = \HT^\ast(X)
\end{equation}
with respect to $\langle \_,\_ \rangle$ and, therefore, similarly
\begin{equation}
	U_i \oplus U_i^\perp = \HO_{\ast}(X),
\end{equation}
where we define $U_i^\perp \coloneqq \varphi(W_i^\perp)$. 
\begin{thm}
\label{thm:appendix_ext_non_vanishing}
	Let $\CE \in \Db(X)$ be an object such that $v(\CE)$ projects non-trivially to $U_i$. Then there exists $2 \leq j \leq i$ such that $\Ext^j(\CE,\CE) \neq 0$. 
\end{thm}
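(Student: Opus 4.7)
The plan is to produce an element $\mu \in R_i$ with $\mu \circ \chHH(\CE) \neq 0$ and then apply Proposition~\ref{prop:appendix_Serre_duality_non_degenerate}. The bridge between the topological hypothesis on $v(\CE)$ and the existence of such a $\mu$ will be the non-degeneracy of the pairing $\langle\_,\_\rangle$ on $W_i$ furnished by the orthogonal decomposition $W_i \oplus W_i^\perp = \HT^\ast(X)$.

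First, I would translate everything to $\HT^\ast(X)$. Since $\HO_\ast(X)$ is a free $\HT^\ast(X)$-module of rank one generated by $\sigma^n$, the map $\varphi\colon \HT^\ast(X)\xrightarrow{\sim}\HO_\ast(X)$ is an isomorphism of $\HT^\ast(X)$-modules. Write $v(\CE)=\tau\lrcorner\sigma^n$ for the unique $\tau\in\HT^\ast(X)$. Since $U_i=\varphi(W_i)$, the hypothesis that $v(\CE)$ projects non-trivially to $U_i$ becomes the statement that the component $\tau_1$ of $\tau$ in $W_i$ (with respect to $W_i\oplus W_i^\perp$) is non-zero. The orthogonal decomposition $W_i\oplus W_i^\perp=\HT^\ast(X)$ is equivalent to the restriction of $\langle\_,\_\rangle$ to $W_i$ being non-degenerate, so there exists $\nu\in W_i$ with $\langle\nu,\tau_1\rangle\neq 0$. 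Because $\nu\in W_i$ is orthogonal to $\tau-\tau_1\in W_i^\perp$, one gets $\langle\nu,\tau\rangle=\langle\nu,\tau_1\rangle\neq 0$, i.e.\ the top-degree component of $\nu\wedge\tau$ in $\HT^{4n}(X)$ is non-zero, and a fortiori $\nu\wedge\tau\neq 0$ in $\HT^\ast(X)$.

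Setting $\mu\coloneqq\IK^{-1}(\nu)\in R_i$ (using $W_i=\IK(R_i)$), the module-compatibility of $\IK$ and $\Ik$ identifies $\mu\circ\chHH(\CE)$ with $\nu\lrcorner v(\CE)=(\nu\wedge\tau)\lrcorner\sigma^n$, which is non-zero since $\sigma^n$ is a free generator of $\HO_\ast(X)$ over $\HT^\ast(X)$ and $\nu\wedge\tau\neq 0$. Proposition~\ref{prop:appendix_Serre_duality_non_degenerate} then yields some $2\leq j\leq i$ with $\Ext^j(\CE,\CE)\neq 0$, as desired. I do not foresee any serious obstacle; the only delicate point is the identity $\nu\lrcorner(\tau\lrcorner\sigma^n)=(\nu\wedge\tau)\lrcorner\sigma^n$, which is just the standard derivation rule for contraction of polyvectors with forms and is precisely what makes $\varphi$ an isomorphism of modules.
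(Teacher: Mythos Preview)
Your proposal is correct and follows essentially the same approach as the paper's proof: use the non-degeneracy of $\langle\_,\_\rangle$ on $W_i$ to produce an element $\nu\in W_i$ with $\nu\lrcorner v(\CE)\neq 0$, pass to $\mu=(\IK)^{-1}(\nu)\in R_i$ via the module-compatible HKR isomorphism to get $\mu\circ\chHH(\CE)\neq 0$, and then invoke Proposition~\ref{prop:appendix_Serre_duality_non_degenerate}. The paper compresses the first step into a single sentence, whereas you have unpacked it (writing $v(\CE)=\varphi(\tau)$, decomposing $\tau=\tau_1+\tau_2$, and checking $\langle\nu,\tau\rangle=\langle\nu,\tau_1\rangle$), but the argument is the same.
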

\begin{proof}
	Since the pairing $\langle \_,\_\rangle$ is non-degenerate when restricted to $W_i$ there exists by assumption an element $\mu \in W_i$ such that $\mu \lrcorner v(\CE) \neq 0$. Using the modified HKR isomorphism we know there exists $\gamma = (\IK)^{-1}(\mu) \in R_i$ such that
	\[
	\gamma \circ \ch(\CE) \neq 0.
	\]
	Proposition~\ref{prop:appendix_Serre_duality_non_degenerate} yields now the assertion.
\end{proof}
This result is already sufficient to prove one part of Theorem~\ref{thm:introduction_non-existence_spherical_K3n_OG10}.
\begin{cor}
\label{cor:sheaf_has_ext_2}
	Let $X$ be a hyper-Kähler manifold of dimension greater than two and $\CE$ a sheaf. Then $\Ext^2(\CE,\CE) \neq 0$ and, in particular, $\CE$ is not spherical. 
\end{cor}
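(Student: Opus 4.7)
The plan is to combine Theorem~\ref{thm:appendix_ext_non_vanishing} with Lemma~\ref{lem:atomic_sheaf_in_Verbitsky} to obtain the non-vanishing of $\Ext^2(\CE,\CE)$, and then observe that this is incompatible with the cohomology of an even-dimensional sphere whenever $\dim X > 2$.

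First, I would recall that by construction $U_2 \subset \HO_\ast(X) \cong \h^\ast(X,\BC)$ is exactly the subalgebra generated by the degree $\leq 2$ elements, which is precisely the Verbitsky component $\SH(X,\BC)$. By Lemma~\ref{lem:atomic_sheaf_in_Verbitsky}, for any (non-zero) sheaf $\CE$ on $X$ one has $v(\CE)_\SH \neq 0$, so the Mukai vector $v(\CE)$ projects non-trivially to $U_2$. Applying Theorem~\ref{thm:appendix_ext_non_vanishing} with $i = 2$ then yields an integer $j$ with $2 \leq j \leq 2$ such that $\Ext^j(\CE,\CE) \neq 0$, i.e.\ $\Ext^2(\CE,\CE) \neq 0$. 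This proves the first assertion.

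For the second assertion, suppose $\CE$ were spherical. By definition this would give a graded ring isomorphism
\[
\Ext^\ast(\CE,\CE) \cong \h^\ast(S^{\dim X},\BC),
\]
so that $\Ext^k(\CE,\CE)$ vanishes for all $0 < k < \dim X$ and for all $k > \dim X$. Since $\dim X = 2n > 2$, in particular $\Ext^2(\CE,\CE) = 0$, which directly contradicts the non-vanishing just established.

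There is essentially no obstacle here: the non-trivial ingredients (Lemma~\ref{lem:atomic_sheaf_in_Verbitsky} and Theorem~\ref{thm:appendix_ext_non_vanishing}) are already in place, and the argument is a one-line deduction from them together with the topological input on the cohomology of $S^{\dim X}$. The only minor point worth flagging is verifying that $U_2$ coincides with $\SH(X,\BC)$ so that Lemma~\ref{lem:atomic_sheaf_in_Verbitsky} really does supply the hypothesis of Theorem~\ref{thm:appendix_ext_non_vanishing}; this follows from the definitions of $R_2$, $W_2$ and $\varphi$ and the fact that the modified HKR isomorphism is multiplicative and grading-preserving.
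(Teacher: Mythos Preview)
Your proposal is correct and follows essentially the same route as the paper: invoke Lemma~\ref{lem:atomic_sheaf_in_Verbitsky} to get $v(\CE)_\SH\neq 0$, identify $U_2$ with $\SH(X,\BC)$, and then apply Theorem~\ref{thm:appendix_ext_non_vanishing} with $i=2$. Your write-up is in fact more explicit than the paper's, which compresses everything into two sentences; your observation that $U_2=\SH(X,\BC)$ is exactly the bridge the paper states just before Proposition~\ref{prop:appendix_Serre_duality_non_degenerate}.
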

\begin{proof}
	We know from Lemma~\ref{lem:atomic_sheaf_in_Verbitsky} that $v(\CE)_\SH$ is non-zero. Theorem~\ref{thm:appendix_non_existence_spherical_under_cohomology} then implies that $\Ext^2(\CE,\CE) \neq 0$. 
\end{proof}
\begin{rmk}
	We want to remark that there do exist non-zero objects in the bounded derived category of a hyper-Kähler manifold satisfying $\Ext^i(\CE,\CE)=0$ for all $0<i<2n$. For example, on a four-dimensional hyper-Kähler manifold $X$ the object $\CE$ defined as the cone of the natural morphism
	\[
	\CO_X \to \CO_X[2]
	\]
	satisfies $\ch(\CE)=0$ and $\Ext^i(\CE,\CE)=0$ for $0<i<4$. In this example $\CE$ is also simple, but not spherical, since $\Ext^{-1}(\CE,\CE)\neq 0$.  
\end{rmk}
An important class of auto-equivalences of a K3 surface $S$ is given by spherical twists $\ST_\CE$ along spherical objects $\CE \in \Db(S)$. Recall that an object $\CF\in \Db(Y)$ is spherical, if its Ext algebra $\Ext^\ast(\CF,\CF)$ is isomorphic to the complex cohomology $\h^\ast(S^{\dim Y},\BC)$ of a sphere of dimension $\dim(Y)$. 

It is notoriously hard to construct examples of interesting derived equivalences of higher-dimensional hyper-Kähler manifolds, see \cite{AddingtonDerSymHK} for an account of some of the known constructions. The following is a partial explanation for this difficulty. 
\begin{thm}
\label{thm:appendix_non_existence_spherical_under_cohomology}
	Let $X$ be a projective hyper-Kähler manifold of dimension $2n$ such that its cohomology is generated by elements of degree less than $2n-1$. Then $\Db(X)$ contains no spherical objects. 
\end{thm}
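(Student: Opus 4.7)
The plan is to argue by contradiction, combining the structural result of Theorem~\ref{thm:appendix_ext_non_vanishing} with a simple Euler-characteristic computation to force a non-zero Ext group in a forbidden degree range.

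Suppose $\CE \in \Db(X)$ is spherical. Then by definition $\Ext^j(\CE,\CE) = 0$ for every $1 \leq j \leq 2n-1$, while $\Ext^0(\CE,\CE) = \Ext^{2n}(\CE,\CE) = \BC$. First I would verify that $v(\CE) \neq 0$. Since $2n$ is even, the Euler characteristic satisfies
\[
\chi(\CE,\CE) \;=\; \sum_i (-1)^i \dim \Ext^i(\CE,\CE) \;=\; 1 + (-1)^{2n} \;=\; 2,
\]
which would vanish if $\chHH(\CE)$ (equivalently $v(\CE)$) were zero, as one sees from $\Tr_{X \times X}(\id \circ \chHH(\CE)) = \Tr_X(\id_\CE) = \chi(\CE,\CE)$. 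Hence $v(\CE) \in \HO_{\ast}(X) \cong \h^\ast(X,\BC)$ is a non-zero class.

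Next, I would unpack the hypothesis on $X$. By assumption, the complex cohomology algebra $\h^\ast(X,\BC)$ is generated by elements of degree strictly less than $2n-1$, i.e.\ of degree at most $2n-2$. By the identification of $U_i$ with the subalgebra of $\h^\ast(X,\Omega_X^\ast) \cong \h^\ast(X,\BC)$ generated by elements of degree at most $i$ recalled in the excerpt, this means exactly that $U_{2n-2} = \h^\ast(X,\BC)$. In particular, the non-zero class $v(\CE)$ projects non-trivially onto $U_{2n-2}$ (the projection being the identity).

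Finally, I would apply Theorem~\ref{thm:appendix_ext_non_vanishing} with $i = 2n-2$: it produces an integer $j$ with $2 \leq j \leq 2n-2$ such that $\Ext^j(\CE,\CE) \neq 0$. But this lies in the forbidden range $1 \leq j \leq 2n-1$ for a spherical object, a contradiction. There is no serious obstacle here: all the heavy lifting has been done in Proposition~\ref{prop:appendix_Serre_duality_non_degenerate} and Theorem~\ref{thm:appendix_ext_non_vanishing}; the only things to check are the non-vanishing of $v(\CE)$ (immediate from $\chi(\CE,\CE) = 2$) and the translation of the algebra-generation hypothesis into the statement $U_{2n-2} = \h^\ast(X,\BC)$.
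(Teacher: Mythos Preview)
The proposal is correct and follows essentially the same route as the paper: both argue by contradiction, use the hypothesis to identify $U_i$ with all of $\h^\ast(X,\BC)$, invoke Theorem~\ref{thm:appendix_ext_non_vanishing} to force a non-zero $\Ext^j$ in a forbidden range, and obtain the necessary non-vanishing of $v(\CE)$ from $\chi(\CE,\CE)=2$. The only cosmetic differences are that you take $i=2n-2$ rather than $i=2n-1$ and establish $v(\CE)\neq 0$ at the outset rather than deriving $v(\CE)=0$ and then reaching the contradiction via the Mukai pairing.
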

\begin{proof}
	If $\CE\in \Db(X)$ is a spherical object, then $\Ext^i(\CE,\CE)=0$ for $0<i<2n$. Theorem~\ref{thm:appendix_ext_non_vanishing} implies therefore that $v(\CE)$ must project trivially to $U_{2n-1}$. 
	
	Our assumptions imply that we have $U_{2n-1} = \h^\ast(X,\Omega_X^\ast)$ and therefore $v(\CE) =0$. This contradicts the equality
	\[
	\langle v(\CE), v(\CE) \rangle = \chi(\CE,\CE) = \sum_i (-1)^i \mathrm{ext}^i(\CE,\CE) = 2,
	\]
	where $\langle \_, \_\rangle$ denotes the generalized Mukai pairing on $\h^\ast(X,\Omega_X^\ast)$, see \cite{CaldararuWillertonMukaiI}. 
\end{proof}

\begin{proof}[Proof of Theorem~\ref{thm:introduction_non-existence_spherical_K3n_OG10}]
	The first part is proven in Corollary~\ref{cor:sheaf_has_ext_2}.
	The second part of the assertion is implied by Theorem~\ref{thm:appendix_non_existence_spherical_under_cohomology} and the fact that for these manifolds the cohomology is generated by classes of degree less than $2n$, see \cite[Lem.\ 3.16]{MarkmanMonodromyModuli} for the case of $\Kdrein$-type and \cite[Thm.\ 1.2]{GKLRLLV} for the case of $\mathrm{OG}10$-type hyper-Kähler manifolds.
\end{proof}
\begin{rmk}
\label{rmk:appendix_spherical_mukai_vector_subspace}
\begin{enumerate}[label={\upshape(\roman*)},wide, labelwidth=!, labelindent=0pt]
\item The proof of Theorem~\ref{thm:appendix_non_existence_spherical_under_cohomology} does not exclude the existence of spherical objects on hyper-Kähler manifolds in total generality. Still, the proof shows that for a potential spherical object $\CE \in \Db(X)$ one has that its Mukai vector $v(\CE)$ must be contained in the subspace $U_{2n-1}^\perp \subset\h^\ast(X,\BQ)$, i.e.\ the orthogonal complement of the subalgebra generated by elements of degree $2n-1$. In particular, this subspace is a subspace of $\h^{n,n}(X)$. Moreover, the LLV algebra $\Fg(X)$ acts trivially on the subspace $U_{2n-1}^\perp$. Thus, the induced derived equivalence of a potential spherical object would act trivially on all non-trivial representations of the LLV algebra such as the Verbitsky component. 

\item Note that one can prove that if $\CE$ is a spherical object, then its Mukai vector $v(\CE)$ must be contained in $\SH(X,\BQ)^{\perp} \subset \h^{\ast}(X,\BQ)$ without using Hochschild (co)homology. 
	Indeed, the induced action of $\ST_\CE$ on $\SH(X,\BQ)$ would be the reflection along the vector $v(\CE)_\SH \in \SH(X,\BQ)$. However, there is no isometry in $\RO(\tH(X,\BQ))$ inducing the reflection along  a one-dimensional subspace via \cite[Eq.\ (2.2)]{BeckmannExtendedIntegral}.
	\end{enumerate}
\end{rmk}
Motivated by the above, we finish with the following. 
\begin{conjecture}
	Let $X$ be a projective hyper-Kähler manifold of dimension greater than two. Then $\Db(X)$ contains no spherical objects. 
\end{conjecture}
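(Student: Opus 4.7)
The plan is to show that any hypothetical spherical object $\CE \in \Db(X)$ would have a Mukai vector $v(\CE)$ constrained to lie in a very small subspace of $\h^\ast(X,\BQ)$, and then argue that this subspace cannot support a class with self-intersection $2$ under the generalized Mukai pairing.

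First I would collect the cohomological constraints already extracted in the appendix. If $\CE$ is spherical, then $\Ext^j(\CE,\CE)=0$ for $0<j<2n$, so by Theorem~\ref{thm:appendix_ext_non_vanishing} the class $v(\CE)$ lies in $U_{2n-1}^{\perp}$. By Remark~\ref{rmk:appendix_spherical_mukai_vector_subspace}(i), $U_{2n-1}^{\perp}$ is contained in the subspace $I \subset \h^{n,n}(X,\BQ)$ of classes annihilated by the LLV algebra $\Fg(X)$. In addition, the Mukai pairing identity gives $\langle v(\CE),v(\CE)\rangle=\chi(\CE,\CE)=2$, so $v(\CE)$ is a non-zero rational class in $I$ with prescribed self-intersection.

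Next I would analyse the structure of $I$ using the full force of the LLV action. Since the Weil operators of every complex structure in a twistor family lie in $\Fg(X)_{\BC}$, any element of $I$ is simultaneously of type $(n,n)$ for every complex structure obtained from a hyperkähler metric, and its representative is parallel for the hyperkähler connection. For the four known deformation types the decomposition of $\h^{\ast}(X,\BQ)$ into irreducible $\Fg(X)$-modules is explicit, and a direct inspection (using the branching rules in \cite{GKLRLLV}) identifies $I$ as a low-dimensional space concentrated in the middle degree and spanned by monomials in $\mathsf{q}_2$ and universal Chern-class combinations. On this finite-dimensional sublattice I would compute the restriction of the generalized Mukai pairing by means of the generalized Fujiki constants of $\mathsf{q}_2^k$ and $\ci_{2k}(X)$ tabulated in \cite{BeckmannSongSecondChernFujiki}, and verify that $2$ is never attained on a primitive integral vector. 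Together with Theorem~\ref{thm:introduction_non-existence_spherical_K3n_OG10}, this would settle the conjecture in all known deformation types.

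The main obstacle is making the argument independent of the deformation-type classification. For a hypothetical new deformation type the irreducible decomposition of $\h^{\ast}(X,\BQ)$ under $\Fg(X)$ is not available, and one must exhibit an intrinsic obstruction. A natural route is to exploit that a class $v \in I$, being $\Fg(X)$-invariant and of type $(p,p)$ for every complex structure, is represented by a parallel form whose degree-$2n$ component is essentially determined by $\chi(\CO_X)=n+1$; combined with integrality of $v \in \h^\ast(X,\BZ)$ and the fact that $\Fg(X)$ annihilates also $\td^{1/2}$ along $I$, the self-intersection should take values in an explicit sublattice of $\BZ$ that provably excludes $2$. Formulating and proving this universal bound, without appeal to a case-by-case inspection, is the genuinely hard step of the argument.
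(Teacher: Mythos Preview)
The statement you are attempting is a \emph{conjecture} in the paper, not a theorem: the paper offers no proof and explicitly leaves it open after Remark~\ref{rmk:appendix_spherical_mukai_vector_subspace}. There is therefore no paper proof to compare your proposal against.

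Your proposal is not a proof but a strategy with an acknowledged gap. The first paragraph correctly summarises the constraints the appendix establishes. For the known deformation types, note that the paper already disposes of $\Kdrein$ and $\mathrm{OG}10$ via Theorem~\ref{thm:appendix_non_existence_spherical_under_cohomology}, so your case-by-case route would only need to treat $\mathrm{Kum}_n$ and $\mathrm{OG}6$; there the trivial $\Fg(X)$-isotypic piece is genuinely nonzero and your proposed computation of the Mukai pairing on it is plausible but not carried out. The final paragraph, where you would need a deformation-type-independent argument, is where a proof would actually have to happen, and you rightly flag it as unresolved.

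Two technical points weaken even the sketch. First, the assertion that ``$\Fg(X)$ annihilates also $\td^{1/2}$ along $I$'' is unclear as stated: $\td^{1/2}$ has a nontrivial Verbitsky component (this is precisely \cite[Prop.\ 3.4]{BeckmannExtendedIntegral}) and is not $\Fg(X)$-invariant, so whatever you intend here needs to be made precise. Second, the integrality input you invoke is delicate: $v(\CE)=\ch(\CE)\td^{1/2}$ is in general only rational, not integral, so an argument that the self-intersection lies in a sublattice of $\BZ$ excluding $2$ cannot start from ``$v\in \h^\ast(X,\BZ)$''. Without a replacement for this step the universal bound you aim for has no foundation.
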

	\bibliography{pub_bib}
	\Addresses
\end{document}